\newtheorem{thm}{Theorem}[section]
\newtheorem{lem}[thm]{Lemma}
\newtheorem{prop}[thm]{Proposition}
\newtheorem{cor}{Corollary}
\newtheorem{defn}{Definition}[section]
\newtheorem{exmp}{Example}[section]
\newtheorem{rem}{Remark}
\newtheorem{note}{Note}
\numberwithin{equation}{section}
\newcommand{\Symb}{\mathcal{A}}
\newcommand{\N}{\mathbb{N}}
\newcommand{\Z}{\mathbb{Z}}
\newcommand{\R}{\mathbb{R}}
\newcommand{\Leng}{\mathcal{L}}
\newcommand{\past}{\mathcal{P}}
\newcommand{\dist}{\mathrm{dist}}
\newcommand{\supp}{\mathrm{supp}}
\newcommand{\diam}{\mathrm{diam}}
\newcommand{\block}{\mathrm{B}}
\newcommand{\romb}{\mathrm{R}}
\newcommand{\camino}{\mathrm{P}}
\newcommand{\neig}{\mathrm{N}}
\newcommand{\Cmu}{\mathrm{c}_\mu}
\providecommand{\argmin}{\mathop{\rm arg\,min}}
\providecommand{\argmax}{\mathop{\rm arg\,max}}
\title[The topological strong spatial mixing property]{The topological strong spatial mixing property\\and new conditions for pressure approximation}
\author{Raimundo Brice\~no}
\address{
Raimundo Brice\~no				\\
Department of Mathematics		\\
The University of British Columbia	\\
1984 Mathematics Road			\\
V6T 1Z2						\\
Vancouver, B.C.				\\
Canada}
\email{raimundo@math.ubc.ca}
\subjclass[2010]{37B50, 37D35 (Primary); 37B10, 37B40, 60G60, 82B20 (Secondary)}
\keywords{Markov random field, Gibbs measure, multidimensional shift of finite type, entropy, topological pressure, equilibrium state, spatial mixing}
\begin{document}

\maketitle

\begin{abstract}
In the context of stationary $\Z^d$ nearest-neighbour Gibbs measures $\mu$ satisfying strong spatial mixing, we present a new combinatorial condition (the topological strong spatial mixing property (TSSM)) on the support of $\mu$ sufficient for having an efficient approximation algorithm for topological pressure. We establish many useful properties of TSSM for studying strong spatial mixing on systems with hard constraints. We also show that TSSM is, in fact, necessary for strong spatial mixing to hold at high rate. Part of this work is an extension of results obtained by D. Gamarnik and D. Katz (2009), and B. Marcus and R. Pavlov (2013), who gave a special representation of topological pressure in terms of conditional probabilities.
\end{abstract}

\tableofcontents

\section{Introduction}

The main goal of this paper is twofold. First, we aim to represent and compute quantitative properties in discrete systems coming from two closely related areas: symbolic dynamics and statistical mechanics. Both share a common ground with different emphasis, which is the study of measures on graphs (typically, a lattice such as $\Z^d$) where the vertices take values on a finite set of \emph{letters} (or \emph{spins}). Secondly, to define and study useful combinatorial conditions for working with such measures on supports with hard constraints, i.e. with local restrictions on the possible configurations.

The quantitative properties considered here are \emph{topological entropy} and its generalization, \emph{topological pressure} (also known as \emph{free energy}, especially in the statistical mechanics context). The two appear in several subjects and, roughly speaking, both try to capture the complexity of a given system by associating to it a nonnegative real number. These values can be represented in several ways: sometimes as a closed formula, other times as a limit and, in the cases of our interest, as the integral of a conditional probability distribution or as the output of an algorithm. Often, it is a difficult task to compute them. In fact, there are computability constraints for approximating these numbers that in general cannot be overcome (e.g. see the characterization of $\Z^d$ topological entropies when $d \geq 2$ in \cite{1-hochman}). We restrict our attention to the subclass of \emph{Markov random fields (MRFs)} known as \emph{nearest-neighbour (n.n.) Gibbs measures}, which are measures defined through local spin interactions. 

In the context of n.n. Gibbs measures, there has been a growing interest \cite{1-martinelli,1-weitz,1-goldberg,2-gamarnik} in a property exhibited by some of these measures known as \emph{strong spatial mixing (SSM)}. This property, related to the absence of a ``boundary phase transition'' \cite{1-martinelli}, is physically meaningful and has proven to be useful in the development of approximation algorithms (e.g. counting independent sets \cite{1-weitz}). It is also a stronger version of a property called \emph{weak spatial mixing (WSM)}, related with uniqueness of equilibrium states \cite{2-weitz} and the absence of a ``phase transition''. Examples of systems that satisfy these properties in some regime include the Ising and Potts models \cite{1-martinelli,2-goldberg}, and even some cases where hard constraints are considered, such as the hard-core model \cite{1-weitz} and $k$-colourings \cite{2-gamarnik} (here called \emph{$k$-checkerboards}). In this paper we study some characteristics that a set of hard constraints should satisfy in order to be (to some extent) compatible with SSM. We introduce a new property on the support of MRFs here called \emph{topological strong spatial mixing (TSSM)}, because of its close relationship with its measure-theoretic counterpart and the absence of a ``combinatorial boundary phase transition''. 

Following the works of D. Gamarnik and D. Katz \cite{1-gamarnik}, and B. Marcus and R. Pavlov \cite{1-marcus}, we provide extended versions of representation theorems of topological pressure in terms of conditional probabilities and also conditions for more general approximation algorithms. In \cite{1-gamarnik}, for obtaining such representation and approximation theorems, they assumed a very strong combinatorial condition that here we call \emph{safe symbol}, together with SSM (and an exponential assumption on the rate of SSM for algorithmic purposes). Later, in \cite{1-marcus}, this assumption was replaced by more general and technical conditions in the case of representation, and a property called there \emph{single-site fillability (SSF)}, which generalized the safe symbol case both in the representation and in the algorithmic results. Here, making use of the theoretical machinery developed in \cite{1-marcus}, we have relaxed even more those conditions, by using the more general property of TSSM and extending the representation and algorithmic results to a point that sometimes can be regarded as optimal. By optimal, we mean (and prove) that TSSM is in some instances a necessary condition for SSM to hold.

Other combinatorial, topological and measure-theoretic mixing properties have been considered in the literature of lattice systems. We also explore the relationships between some of them and how they have shown to be useful in some cases for representation and approximation.

Summarizing, we focus on:
\begin{enumerate}
\item properties of qualitative mixing conditions and relationships among them,
\item representation of topological entropy/pressure through useful formulas, and
\item algorithms for approximating such quantities.
\end{enumerate}

The paper is organized as follows: First, in Section \ref{sec2} and Section \ref{sec4}, we introduce the basic notions of symbolic dynamics, MRFs, Gibbs measures and mixing properties. Then, in Section \ref{tssmproperties}, we define the notion of TSSM, establish characterizations of it and relationships to measure-theoretic quantities. Next, in Section \ref{sec6}, we provide connections between measure-theoretic and combinatorial mixing properties; in particular, Theorem \ref{rate} provides evidence that TSSM is closely related with SSM. In Section \ref{examples}, we give several examples illustrating different kinds of mixing properties. Among these examples, we consider the $\Z^2$ $4$-checkerboard and prove that is not possible to have a Gibbs measure supported on it satisfying SSM (it has been suggested in the literature \cite{1-salas} that a uniform Gibbs measure on this system should satisfy WSM). Finally, in Section \ref{PressureRep} and Section \ref{PressureApp}, we discuss some pressure representation theorems and we show how a good representation can be used for developing efficient approximation algorithms in a similar fashion to \cite{1-marcus}.

Many of the results in this work we believe are easily extendable to other regular infinite graphs (transitive, Cayley, etc.) besides $\Z^d$.
 
\section{Definitions and preliminaries}
\label{sec2}

Given $d \in \N$, consider the \emph{$d$-dimensional cubic lattice} $\Z^d$, a finite set of \emph{letters} $\Symb$ called the \emph{alphabet}, and the space of arrays $\Symb^{\Z^d}$, the \emph{full shift}. Notice that $\Z^d$ (in a slight abuse of notation) can be regarded as a countable graph with regular degree $2d$, where $\mathcal{V}(\Z^d) = \Z^d$ is the set of vertices (or \emph{sites}) and $\mathcal{E}(\Z^d) = \left\{e = \{p,q\}: p,q \in \Z^d, \|p-q\|=1\right\}$ is the set of edges, with $\|p\| = \sum_{i=1}^{d}\left|p_i\right|$. Given $p \in \Z^d$, denote $\sigma:\Z^d \times \Symb^{\Z^d} \to \Symb^{\Z^d}$ the natural \emph{shift action} on $\Symb^{\Z^d}$ defined by $(p,x) \mapsto \sigma_p(x)$, with $\left(\sigma_p(x)\right)(q) = x(p+q)$, for $q \in \Z^d$. Considering the distance function $m(x,y) = 2^{-\inf\{\|p\|: x(p) \neq y(p)\}}$, $(\Symb^{\Z^d},m)$ is a compact metric space.

We will denote all subsets of $\Z^d$ with uppercase letters (e.g. $S$, $T$, etc.). Whenever a finite set $S$ is contained in an infinite set $T$, we denote this by $S \Subset T$. The \emph{(outer) boundary} of $S \subseteq \Z^d$ is the set $\partial S$ of $p \in \Z^d \backslash S$ which are adjacent to some element of $S$, i.e. $\partial S := \left\{p \in S^c: \dist(\{p\},S) = 1\right\}$, where $\dist(A,B) = \min_{p \in A, q \in B} \|p - q\|$, for $A,B \subseteq \Z^d$. When denoting subsets of $\Z^d$ that are singletons, brackets will be usually omitted, e.g. $\dist(\{p\},S)$ will be regarded to be the same as $\dist(p,S)$. We will say that two sites $p,q \in \Z^d$ are adjacent if $\dist(p,q) = 1$ and we will denote this by $p \sim q$.

Given $n \in \N$, $\neig_n(S) := \left\{p \in \Z^d: \dist(p,S) \leq n\right\}$ denotes the \emph{$n$-neighbourhood} of $S$ and $\partial_n(S) := \neig_n(S) \backslash S$, the \emph{$n$-boundary} of $S$. The \emph{$n$-block} is the set $\block_n := \{p \in \Z^d: |p_i| \leq n, \mbox{ for all } i\}$ and the \emph{$n$-rhomboid}, $\romb_n := \{p \in \Z^d: \|p\| \leq n\} = \neig_n(0)$, where $0$ denotes the zero vector with number of coordinates depending on the context. For a finite set $S \Subset \Z^d$, we define its \emph{diameter} as $\diam(S) := \max_{p,q \in S}\dist(p,q)$. A \emph{path} will be any sequence $\camino \Subset \Z^d$ of distinct vertices $p_1, p_2, \dots, p_{n}$ such that $\dist(p_i,p_{i+1}) = 1$, for all $1 \leq i < n$, with $|\camino| = n$. For $W \subseteq \Z^d$ not containing a site $p \in \Z^d$, a \emph{path from $p$ to $W$} is a path whose first vertex is $p$ and whose last vertex is in $\partial W$. A set $S \subseteq \Z^d$ is said to be \emph{connected} if for every $p,q \in S$, there is a path $\camino$ from $p$ to $q$ contained in $S$ (i.e. $\camino \subseteq S$).

A \emph{configuration} is a map $u: T \to \Symb$ for $\emptyset \neq T \subseteq \Z^d$ (i.e. $u \in \Symb^T$), which will be usually denoted with lowercase letters (e.g. $u$, $v$, etc.). $T$ is called the \emph{shape} of $u$, and a configuration will be said to be finite if its shape is finite. For any configuration $u$ with shape $T$ and $S \subseteq T$, $u(S)$ denotes the restriction of $u$ to $S$, i.e. the \emph{sub-configuration} of $u$ occupying $S$. For $S$ and $T$ disjoint sets, $u \in \Symb^S$ and $v \in \Symb^T$, $uv$ will be the configuration on $S \cup T$ defined by $(uv)(S) = u$ and $(uv)(T) = v$, called the \emph{concatenation} of $u$ and $v$. A \emph{point} is a configuration with shape $\Z^d$, usually denoted with letters $x$, $y$, etc.

Given a countable family $\mathcal{F}$ of finite configurations, define:
\begin{equation}
\mathsf{X}(\mathcal{F}) := \left\{x \in \Symb^{\Z^d}: \sigma_p(x)(S) \notin \mathcal{F}, \mbox{ for all } S \Subset \Z^d, \mbox{ for all } p \in \Z^d\right\}.
\end{equation}

Here, $X = \mathsf{X}(\mathcal{F}) \subseteq \Symb^{\Z^d}$ is called a \emph{$\Z^d$ shift space} and is the set of all points that do not contain an element from $\mathcal{F}$ as a sub-configuration, up to translation. Notice that a shift space $X$ is always a shift-invariant set, i.e. $\sigma_p(X) = X$, for all $p \in \Z^d$. In fact, a subset $X \subseteq \Symb^{\Z^d}$ is a shift space if and only if it is shift-invariant and closed for the metric $m$. More than one family $\mathcal{F}$ can define the same shift space $X$ and in the case where $X$ can be defined by a finite family $\mathcal{F}$, it is said to be a \emph{shift of finite type (SFT)}. An SFT is a \emph{nearest-neighbour (n.n.) SFT} if $\mathcal{F}$ can be chosen to be configurations only on shapes on edges, i.e. pairs of the form $\{p,p+e_i\}$, where $p \in \Z^d$ and $\{e_i\}_{i=1}^{d}$ denote the canonical basis. Along this paper, we restrict our attention to n.n. SFTs in almost every case.

\begin{exmp}
Two important examples of n.n. SFT are the \emph{hard square $\Z^d$ shift space $\mathcal{H}_d$} (the shift space of points in $\{0,1\}^{\Z^d}$ with no adjacent $1$s) and the \emph{$k$-checkerboards} $\mathcal{C}_d(k)$ (the shift spaces of proper $k$-colourings on $\Z^d$), where $k \geq 2$. Formally,
\begin{align}
\mathcal{H}_d	    &	:= \left\{x \in \{0,1\}^{\Z^d}: x(p) \cdot x(p+e_i) = 0, \mbox{ for all } p \in \Z^d, i = 1,\dots,d\right\},		\\
\mathcal{C}_d(k)  &	:= \left\{x \in \{1,\dots,k\}^{\Z^d}: x(p) \neq x(p+e_i) , \mbox{ for all } p \in \Z^d, i = 1,\dots,d\right\}.
\end{align}
\end{exmp}

The \emph{language} of a shift space $X$ is:
\begin{equation}
\Leng(X) := \bigcup_{S \Subset \Z^d} \Leng_S(X),
\end{equation}
where $\Leng_S(X) := \left\{x(S): x \in X\right\}$. For a subset $S \subseteq \Z^d$, a configuration $u \in \Symb^S$ is \emph{globally admissible} for $X$ if $u$ extends to a point on $\Z^d$, i.e. if there exists $x \in X$ such that $x(S) = u$. So, the language $\Leng(X)$ is precisely the set of finite globally admissible configurations. On the other hand, given $S \subseteq \Z^d$ and a configuration $u \in \Symb^S$, we denote $[u]_X := \left\{x \in X: x(S) = u\right\}$. When $S$ is finite, these sets are called \emph{cylinder sets}, and when omitting the subscript $X$, we will think of $[u]$ as the cylinder for the full shift $X = \Symb^{\Z^d}$.

\subsection{Conjugacy and topological entropy}
A natural way to transform one shift space to another is via a particular class of maps given by the following definition. 

\begin{defn}
\label{conj}
A \emph{sliding block code} between $\Z^d$ shift spaces $X$ and $Y$ is a map $\phi:X \to Y$ for which there is a positive integer $N$ and a map $\Phi:\Leng_{\block_N}(X) \to \Leng_{\block_1}(Y)$ such that:
\begin{equation}
\phi(x)_p = \Phi(x(p+\block_N)), \mbox{ for all } p \in \Z^d.
\end{equation}

A \emph{conjugacy} is an invertible sliding block code, and two shift spaces $X$ and $Y$ are said to be \emph{conjugate} (denoted $X \cong Y$) if there is a conjugacy from one to the other.
\end{defn}

\begin{exmp}
\label{blockcode}
Given $N \in \N$ and a shift space $X \subseteq \Symb^{\Z^d}$, a natural sliding block code is the \emph{higher block code} $\beta_N: X \rightarrow \left(\Symb^{\block_N}\right)^{\Z^d}$ defined by:
\begin{equation}
\left(\beta_N(x)\right)_p = x(p+\block_N).
\end{equation}

We call the image, $Y = \beta_N(X)$, a \emph{higher block code representation} of $X$. Notice that the alphabet of $Y$ is $\Symb^{\block_N}$.
\end{exmp}

Two shift spaces are often regarded as being the same if they are conjugate. Properties preserved by conjugacies are called \emph{conjugacy invariants}. For example, the property of being an SFT is a conjugacy invariant: If a shift space $X$ is conjugate to an SFT, then $X$ itself is an SFT. Another important invariant is the following.

\begin{defn}
The \emph{topological entropy} of a shift space $X$ is defined as:
\begin{equation}
h(X) := \inf_{n} \frac{\log\left|\Leng_{\block_n}(X)\right|}{|\block_n|} = \lim_{n \rightarrow \infty} \frac{\log\left|\Leng_{\block_n}(X)\right|}{|\block_n|}.
\end{equation}
\end{defn}

Topological entropy is a conjugacy invariant (i.e. if $X \cong Y$, then $h(X) = h(Y)$). The limit always exists because $\{\left|\Leng_{\block_n}(X)\right|\}_n$ is a (coordinate-wise) sub-additive sequence and a well-known multidimensional extension of Fekete's sub-additive lemma applies \cite{1-balister}. Notice that the topological entropy can be regarded as the growth rate of globally admissible configurations on $\block_n$.

It is important to point that for every SFT there is a n.n. SFT higher block code representation. In the case of n.n. SFTs, there is a simple algorithm for computing $h(X)$ when $d = 1$, because $h(X) = \log\lambda_A$, for $\lambda_A$ the largest eigenvalue of the adjacency matrix $A$ of the edge shift representation of $X$ \cite{1-marcus}. However, for $d \geq 2$, there is in general no known closed form for the entropy. Only in a few specific cases a closed form is known (e.g. dimer model, square ice \cite{1-kasteleyn,1-lieb}).

\begin{exmp}
For $\mathcal{H}_1 \subseteq \{0,1\}^{\Z}$, it is easy to see that $h(\mathcal{H}_1) = \log \lambda$, where $\lambda \approx 1.68103$ is the golden ratio. On the other hand, no closed form is known for the value of $h(\mathcal{H}_d)$, for $d \geq 2$.
\end{exmp}

One can hope to approximate the value of the topological entropy of a multidimensional SFT, whether by using its definition and truncating the limit or by alternative methods. A relevant fact is that, for $d \geq 2$, it is algorithmically undecidable to know if a given configuration is in $\Leng(X)$ or not \cite{1-berger,1-robinson}. In this sense, it is useful to define an alternative, still meaningful, set of configurations. Given a family of configurations $\mathcal{F}$ and a shape $S$, $u \in \Symb^S$ is said to be \emph{locally admissible} for $X = \mathsf{X}(\mathcal{F})$ if for all $S' \subseteq S$, $u(S') \notin \mathcal{F}$, up to translation. Notice that a point $x$ is locally admissible if and only if $x$ is globally admissible. The set of finite locally admissible configurations will be denoted by $\Leng^{\rm l.a.}_S(\mathcal{F})$. Considering this, we have the following result.

\begin{thm}[\cite{1-friedland,1-hochman}]
\label{thm-friedland}
Given a finite family of configurations $\mathcal{F}$ and the SFT $X = \mathsf{X}(\mathcal{F})$, $h(X)$ can be computed by counting locally admissible configuration rather than globally admissible ones:
\begin{equation}
h(X) =  \inf_{n}\frac{\log \left| \Leng^{\rm l.a.}_{\block_n}(\mathcal{F}) \right|}{|\block_n|} = \lim_{n \rightarrow \infty}\frac{\log \left| \Leng^{\rm l.a.}_{\block_n}(\mathcal{F}) \right|}{|\block_n|}.
\end{equation}
\end{thm}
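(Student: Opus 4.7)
Write $a_n := |\Leng^{\rm l.a.}_{\block_n}(\mathcal{F})|$ and $b_n := |\Leng_{\block_n}(X)|$. The goal is to prove
\[
h(X) = \inf_n \frac{\log a_n}{|\block_n|} = \lim_{n \to \infty} \frac{\log a_n}{|\block_n|}.
\]
The trivial inclusion $\Leng_{\block_n}(X) \subseteq \Leng^{\rm l.a.}_{\block_n}(\mathcal{F})$ yields $b_n \leq a_n$, and so
\[
h(X) = \inf_n \frac{\log b_n}{|\block_n|} \leq \inf_n \frac{\log a_n}{|\block_n|} \leq \liminf_n \frac{\log a_n}{|\block_n|}.
\]
The substantive task is therefore the reverse bound $\limsup_n \frac{\log a_n}{|\block_n|} \leq h(X)$; combined with the display above, this forces equality throughout.

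I would prove this bound via a compactness-and-tiling argument. By compactness of $\Symb^{\Z^d}$ in the product topology and closedness of $X = \mathsf{X}(\mathcal{F})$, a configuration $v \in \Symb^{\block_n}$ is globally admissible if and only if it extends to a locally admissible configuration on $\block_M$ for every $M \geq n$. Since $\Leng^{\rm l.a.}_{\block_n}(\mathcal{F})$ is finite, for each $n$ there exists $M_0 = M_0(n)$ such that every $v \in \Leng^{\rm l.a.}_{\block_n}(\mathcal{F}) \setminus \Leng_{\block_n}(X)$ fails to extend locally admissibly to $\block_{M_0}$. Now, for large $N$, pack the interior box $\block_{N - M_0}$ with $K$ disjoint translates $p_1 + \block_n, \ldots, p_K + \block_n$ (so $K |\block_n| \sim |\block_{N-M_0}|$); for each $i$ the block $p_i + \block_{M_0}$ lies inside $\block_N$, so if some locally admissible $w$ on $\block_N$ had a non-globally-admissible restriction $v := w|_{p_i + \block_n}$, then $w|_{p_i + \block_{M_0}}$ would furnish a locally admissible extension of $v$ to $\block_{M_0}$, contradicting the choice of $M_0$. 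Hence each interior tile restriction must be globally admissible (at most $b_n$ choices per tile), while the complement admits at most $|\Symb|$ choices per site, giving
\[
a_N \leq b_n^{K} \cdot |\Symb|^{|\block_N| - K |\block_n|}.
\]
Dividing by $|\block_N|$ and letting $N \to \infty$ with $n$ fixed, the first term tends to $\frac{\log b_n}{|\block_n|}$ and the second vanishes because the complement volume is of surface order $O(M_0 N^{d-1})$; taking the infimum over $n$ then gives $\limsup_N \frac{\log a_N}{|\block_N|} \leq h(X)$.

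The principal obstacle is the quantitative relationship between $n$ and $M_0(n)$: the compactness argument supplies existence of $M_0(n)$ but no effective control, so the limits must be taken in the right order --- send $N \to \infty$ first for fixed $n$, and only afterwards let $n \to \infty$. The vanishing surface-to-volume ratio $O(M_0 / N) \to 0$ as $N \to \infty$ (with $n$, and hence $M_0$, fixed) absorbs any inefficiency introduced by the $M_0$-boundary annulus, regardless of how large $M_0$ is relative to $n$.
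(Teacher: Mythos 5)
Your argument is correct: the trivial inequality in one direction, and for the other the compactness argument producing $M_0(n)$ so that locally-but-not-globally admissible patterns on $\block_n$ cannot extend locally admissibly to $\block_{M_0}$, followed by tiling $\block_{N-M_0}$ with translates of $\block_n$ and absorbing the boundary annulus, with the limits taken in the order $N\to\infty$ then $n\to\infty$, is exactly the standard proof. Note that the paper itself gives no proof of this theorem---it is quoted from \cite{1-friedland,1-hochman}---and your argument is essentially the one found in those references, so there is nothing to flag beyond the implicit (and harmless) assumption that $X\neq\emptyset$.
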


Since counting locally admissible configurations is tractable, it can be said that Theorem \ref{thm-friedland} already provides an approximation algorithm for the topological entropy of an SFT. Formally, a real number $h$ is \emph{right recursively enumerable} if there is a Turing machine which, given an input $n \in \N$, computes a rational number $r(n) \geq h$ such that $r(n) \rightarrow h$. Given Theorem \ref{thm-friedland} and the fact that such limit is also an infimum, we can see that $h(X)$ is right recursively enumerable, for any $\Z^d$ SFT $X$. In fact, the converse is also true due to the following celebrated result from M. Hochman and T. Meyerovitch.

\begin{thm}[\cite{1-hochman}]
The class of right recursively enumerable numbers is exactly the class of entropies of $\Z^d$ SFTs.
\end{thm}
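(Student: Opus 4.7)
My plan is to prove the two inclusions separately. The easy direction is that $h(X)$ is right recursively enumerable for every $\Z^d$ SFT $X$, and this already follows from Theorem \ref{thm-friedland}: since $\mathcal{F}$ is finite, the set $\Leng^{\rm l.a.}_{\block_n}(\mathcal{F})$ is computable (one checks, by brute force, each of the finitely many $u \in \Symb^{\block_n}$ against each translate of each element of $\mathcal{F}$ that fits inside $\block_n$). So the Turing machine that on input $n$ outputs the rational number $r(n) = \frac{\log|\Leng^{\rm l.a.}_{\block_n}(\mathcal{F})|}{|\block_n|}$ (truncated to $n$ digits, say) witnesses that $h(X)$ is a decreasing computable approximation, because the sequence is an infimum by Theorem \ref{thm-friedland}.

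For the hard direction, I would fix $h \geq 0$ right recursively enumerable, witnessed by a Turing machine $T$ producing rationals $r(n) \searrow h$, and construct a $\Z^d$ SFT $X_T$ with $h(X_T) = h$. I would reduce immediately to $d = 2$, since for any $d \geq 2$ the entropy of the product $Y \times \Symb^{\Z^{d-2}}$ equals $h(Y)$ when one of the factors is trivial, or more carefully use a standard lifting of a $\Z^2$ SFT to a $\Z^d$ SFT preserving entropy. The heart of the construction is to combine two layers on $\Z^2$:
\begin{enumerate}
\item A geometric layer enforcing a self-similar hierarchical tiling in the spirit of Robinson, which partitions the plane into nested square regions of sizes $N_k$ growing super-exponentially. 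This layer is rigid (entropy zero) and carves out large ``computation zones'' at every scale.
\item A free-bit layer over a second alphabet, whose admissible configurations are restricted by letting the hierarchical layer encode a simulation of $T$ in each computation zone, and using the output of $T$ at scale $k$ to forbid a carefully chosen fraction of free-bit configurations inside that zone, so that the density of surviving configurations tends to $h / \log|\Symb|$ with the right rate.
\end{enumerate}

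The key steps, in order, would be: (i) write down the local rules for the Robinson-type hierarchy and verify it is a n.n.\ SFT after a higher block recoding in the sense of Example \ref{blockcode}; (ii) embed a universal Turing machine into the hierarchy so that at level $k$ the machine has enough tape and time to compute $r(N_k)$; (iii) define the filter on the free-bit layer that blocks configurations whose ``fraction of active bits'' inside the current zone exceeds $r(N_k)/\log|\Symb|$; (iv) show the upper bound $h(X_T) \leq h$ by using that admissible patterns on $\block_n$ are dominated by the constraint coming from any sufficiently large scale $k$ with $N_k \geq n$, so $\frac{\log|\Leng_{\block_n}(X_T)|}{|\block_n|} \leq r(N_k)+o(1) \to h$; (v) show the matching lower bound by exhibiting, for each $k$, an explicit family of $\approx 2^{|\block_{N_k}|\, h}$ valid patterns obtained by filling the free-bit layer arbitrarily up to the density allowed at level $k$, then invoking the variational lemma for subadditive sequences to push this through to $h(X_T) \geq h$.

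The main obstacle is step (iii)--(v): one must ensure that the Turing machine simulation only removes the desired $\log$-fraction of free bits and does not cross-contaminate across nested scales, and that the accounting of sites used by the hierarchy and the TM simulation contributes $o(|\block_n|)$ to the log-count, so that the enforced density becomes the actual exponential growth rate. This delicate bookkeeping -- showing that the ``overhead'' of the geometric and computational layers is entropy-negligible while their constraints are entropy-rigid -- is what makes the construction work and what requires the bulk of the technical work in \cite{1-hochman}. Uniqueness of $h$ as the limiting value then follows by combining the sub-additive convergence in the definition of $h(X_T)$ with the fact that $r(n) \to h$.
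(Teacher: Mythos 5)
The paper does not prove this statement at all: it is quoted verbatim from Hochman--Meyerovitch \cite{1-hochman}, and the only argument the paper supplies in its vicinity is the easy inclusion (every $\Z^d$ SFT entropy is right recursively enumerable), obtained exactly as you do from Theorem \ref{thm-friedland} plus the fact that the limit is an infimum. That half of your proposal is essentially correct and matches the paper's discussion, with one small quibble: since each term $\frac{\log|\Leng^{\rm l.a.}_{\block_n}(\mathcal{F})|}{|\block_n|}$ is irrational in general and only bounded \emph{below} by $h$, you must output a rational approximation rounded \emph{up} (truncating ``to $n$ digits'' could dip below $h$ and break the requirement $r(n)\geq h$).

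The hard inclusion, however, is where your proposal has genuine gaps rather than a proof. First, your reduction to $d=2$ is wrong as stated: the extension of a $\Z^2$ SFT $Y$ to $\Z^d$ by a trivial factor (i.e.\ points constant in the extra $d-2$ directions) has $|\Leng_{\block_n}| \approx e^{n^2 h(Y)}$ against $|\block_n| = (2n+1)^d$, hence entropy $0$ for $d>2$; the correct entropy-preserving lift is the SFT whose restriction to each $\Z^2$-slice lies in $Y$ with no coupling between slices. Second, and more seriously, steps (ii)--(v) are precisely the content of \cite{1-hochman} and are deferred rather than carried out: the constraint ``forbid free-bit configurations whose density inside a level-$k$ zone exceeds $r(N_k)/\log|\Symb|$'' is not a local rule, and making it one --- i.e.\ enforcing, by finitely many nearest-neighbour constraints, a simulation of $T$ at every scale whose output censors the free layer, while guaranteeing that the hierarchical and computational overhead is entropy-negligible, that the censoring does not interact across nested scales, and that the surviving growth rate is exactly $h$ from both sides --- is the entire technical difficulty. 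Your sketch names these obstacles but does not resolve them, so as written it is a plan (faithful in spirit to the cited construction) rather than a proof; since the paper itself treats the theorem as an external citation, nothing in the paper can fill these gaps for you.
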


A real number $h$ is \emph{computable} if there is a Turing machine which, given an input $n \in \N$, computes a rational number $r(n)$ such that $|h - r(n)| < \frac{1}{n}$. For example, every algebraic number is computable, since there are numerical methods for approximating the roots of an integer polynomial. This is a strictly stronger notion than right recursively enumerable \cite{1-ko}. It can be shown that, under extra (mixing) assumptions on an SFT $X$, $h(X)$ turns out to be computable (see \cite{1-hochman} and Theorem \ref{mixComp}). Moreover, the difference $|h - r(n)|$ can be thought as a function of $n$, introducing a refinement of the classification of entropies by considering the speed of approximation. A relevant case for us is when that function is bounded by a polynomial in $\frac{1}{n}$.

\begin{exmp}[\cite{1-pavlov,1-gamarnik}]
\label{hard2}
The topological entropy $h(\mathcal{H}_2)$ of the hard square $\Z^2$ shift space is a computable number that can be approximated in polynomial time.
\end{exmp}

\subsection{Measure-theoretic definitions}

In Example \ref{hard2}, which is basically a combinatorial/topological result, the proofs from \cite{1-pavlov} and \cite{1-gamarnik} are almost entirely based on probabilistic and measure-theoretic techniques. In this paper are frequently considered Borel probability measures $\mu$ on $\Symb^{\Z^d}$. This means that $\mu$ is determined by its values on the cylinder sets $[u]$, where $u$ is a configuration with arbitrary shape $S \Subset \Z^d$. For notational convenience, when measuring cylinder sets, we just use the configuration $u$ instead of $[u]$. For instance, $\mu\left(uv \middle\vert w\right)$ represents the conditional measure $\mu\left([u] \cap [v] \middle\vert [w]\right)$.

A measure $\mu$ on $\Symb^{\Z^d}$ is \emph{shift-invariant} (or \emph{stationary}) if $\mu(\sigma_{p}(C)) = \mu(C)$, for all measurable sets $C$ and $p \in \Z^d$. Given a shift space $X$, $\mathcal{M}(X)$ denotes the set of shift-invariant Borel probability measures whose \emph{support} $\supp(\mu)$ is contained in $X$, where:
\begin{equation}
\supp(\mu) := \left\{x \in \Symb^{\Z^d}: \mu(x(S)) > 0, \mbox{ for all } S \Subset \Z^d\right\}.
\end{equation}

In this context, the support $\supp(\mu)$ turns out to be always a shift-space (closed and shift-invariant). A measure $\mu \in \mathcal{M}(\Symb^{\Z^d})$ is \emph{ergodic} if whenever $C \subseteq \Symb^{\Z^d}$ is measurable and shift-invariant (i.e, if $\sigma_p(C) = C$, for all $p \in \Z^d$), then $\mu(C) \in \{0,1\}$. For $\mu \in \mathcal{M}(\Symb^{\Z^d})$, we can also define a notion of entropy.

\begin{defn}
The \emph{measure-theoretic entropy} of a shift-invariant measure $\mu$ is defined as:
\begin{equation}
h(\mu) := \lim_{n \rightarrow \infty} \frac{-1}{|\block_n|} \sum_{w \in \Symb^{\block_n}} \mu(w)\log(\mu(w)),
\end{equation}
where $0 \log 0 = 0$.
\end{defn}

A fundamental relationship between topological and measure-theoretic entropy is the following.

\begin{thm}[Variational Principle \cite{1-misiurewicz}]
\label{varPrinc}
Given a shift space $X$,
\begin{equation}
h(X) = \sup_{\mu \in \mathcal{M}(X)} h(\mu) = \max_{\mu \in \mathcal{M}(X)} h(\mu).
\end{equation}
\end{thm}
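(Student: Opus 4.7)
The plan is to prove both inequalities $\sup_{\mu \in \mathcal{M}(X)} h(\mu) \leq h(X)$ and $\sup_{\mu \in \mathcal{M}(X)} h(\mu) \geq h(X)$, and moreover to explicitly construct a measure that attains $h(X)$, thereby upgrading the supremum to a maximum in a single stroke rather than appealing separately to upper semi-continuity of the entropy map.

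For the easy inequality, fix $\mu \in \mathcal{M}(X)$. Since $\supp(\mu) \subseteq X$, only configurations $w \in \Leng_{\block_n}(X)$ contribute to the sum $-\sum_{w \in \Symb^{\block_n}} \mu(w)\log\mu(w)$. By concavity of $t \mapsto -t\log t$, the Shannon entropy of any probability distribution supported on a finite set of cardinality $N$ is at most $\log N$, so
\begin{equation*}
-\sum_{w \in \Symb^{\block_n}} \mu(w)\log\mu(w) \leq \log \bigl|\Leng_{\block_n}(X)\bigr|.
\end{equation*}
Dividing by $|\block_n|$ and letting $n \to \infty$ yields $h(\mu) \leq h(X)$.

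For the reverse inequality and the attainment of the maximum, I would follow Misiurewicz's classical construction. For each $n \in \N$, let $\nu_n$ be the probability measure that, for each $w \in \Leng_{\block_n}(X)$, assigns mass $1/|\Leng_{\block_n}(X)|$ to an arbitrary point $x_w \in X$ with $x_w(\block_n) = w$ (possible precisely because $\Leng_{\block_n}(X)$ is the set of globally admissible configurations on $\block_n$). Form the shift-average
\begin{equation*}
\mu_n := \frac{1}{|\block_n|}\sum_{p \in \block_n} \sigma_p\nu_n.
\end{equation*}
By weak-* compactness of probability measures on $\Symb^{\Z^d}$, a subsequence $\mu_{n_k}$ converges to some $\mu$; a standard telescoping estimate shows $\mu$ is shift-invariant, and closedness of $X$ combined with $\supp(\nu_n) \subseteq X$ forces $\supp(\mu) \subseteq X$, so $\mu \in \mathcal{M}(X)$. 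The crucial computation is to bound the partial entropy of $\mu_n$ on $\block_n$ from below using concavity of Shannon entropy under convex combinations, noting that the partial entropy of $\nu_n$ on $\block_n$ equals $\log|\Leng_{\block_n}(X)|$ by construction, and then to tile a larger block $\block_N$ by translates of $\block_n$ to transfer this lower bound to $\mu$ in the weak-* limit.

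The main obstacle is the bookkeeping in this final step: one encounters boundary losses both in tiling $\block_N$ by $\block_n$-translates and in relating the block entropy of the weak-* limit $\mu$ to that of $\mu_{n_k}$. The remedy is to let $n$ grow much more slowly than $N$, so that the boundary contributions are of order $|\partial_n \block_N|/|\block_N| \to 0$ and the error terms vanish after division by $|\block_N|$. This yields $h(\mu) \geq h(X)$, and combined with the easy direction gives $h(\mu) = h(X)$, proving both equalities and exhibiting a concrete maximizer.
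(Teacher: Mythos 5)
The paper does not prove this statement at all: it is quoted as a classical result with a citation to Misiurewicz, so there is no internal proof to compare against. Your sketch is essentially the standard textbook argument, and both halves are sound: the inequality $h(\mu)\leq h(X)$ via the $\log N$ bound on Shannon entropy of a distribution concentrated on $\Leng_{\block_n}(X)$ (correctly using that $\mu\in\mathcal{M}(X)$ gives $\mu(w)>0$ only for globally admissible $w$), and the construction of a maximizer by equidistributing on points extending each $w\in\Leng_{\block_n}(X)$ (correctly noting that such extensions exist by the very definition of $\Leng$, which matters in $d\geq 2$), averaging over shifts, and passing to a weak-$*$ limit.

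One piece of bookkeeping in your last step deserves care. As written you speak of bounding the partial entropy of $\mu_n$ on $\block_n$ (same index) and of a coupled limit in which $n$ grows slowly with $N$. Weak-$*$ convergence only gives $H_{\mu_{N_k}}(\block_n)\to H_{\mu}(\block_n)$ for a \emph{fixed} block $\block_n$, since only finitely many clopen cylinders are involved; it does not directly control $H_{\mu_{N_k}}(\block_{n(N_k)})$ along a coupled sequence. The clean version of your two-scale estimate is: tile the large block $\block_N$ (the index of $\nu_N$ and of the shift-average) by translates of a fixed small block $\block_n$, use subadditivity of entropy on $\nu_N$ together with $H_{\nu_N}(\block_N)=\log|\Leng_{\block_N}(X)|$ and concavity of $H_{\cdot}(\block_n)$ in the measure to get $H_{\mu_N}(\block_n)\geq \frac{|\block_n|}{|\block_N|}\log|\Leng_{\block_N}(X)| - \varepsilon(n,N)$ with $\varepsilon(n,N)\to 0$ as $N\to\infty$ for fixed $n$; then let $N\to\infty$ along the convergent subsequence with $n$ fixed, and only afterwards let $n\to\infty$. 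With that ordering of limits your boundary-loss estimates go through and the argument is complete; no coupling of $n$ to $N$ is needed.
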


\begin{rem}
The measures that achieve the maximum are called \emph{measures of maximal entropy (m.m.e.)} for $X$. Notice that if $\mu$ is an m.m.e. for $X$, then $h(X) = h(\mu)$.
\end{rem}

Given a shift space $X$ and a continuous function $f \in C(X)$, we define the \emph{topological pressure}, that can be regarded as a generalization of topological entropy.

\begin{defn}
Given a n.n. $\Z^d$ SFT $X$ and $f \in C(X)$, the \emph{topological pressure} of $f$ on $X$ is:
\begin{equation}
P_X(f) := \sup_{\mu \in \mathcal{M}(X)}\left(h(\mu) + \int{f}d\mu\right).
\end{equation}
\end{defn}

In this case, the supremum is also always achieved and any measure which achieves the supremum is called an \emph{equilibrium state} for $X$ and $f$. We write $P(f)$ instead of $P_X(f)$, if $X$ is understood. Notice that in the special case when $f \equiv 0$, $P(f)$ is the topological entropy $h(X)$ of $X$, thanks to Theorem \ref{varPrinc}.

\begin{note}
The preceding definition is a characterization of pressure in terms of a variational principle, but can also be regarded as its definition (see \cite[Theorem 6.12]{1-ruelle}). Informally, topological pressure can be thought as a growth rate, where the configurations are ``weighted'' by the given function. This idea is formalized for a more particular case in the next subsection.
\end{note}

\subsection{Markov random fields and Gibbs measures}

A key family of measures on $\Z^d$ for our purposes is the following one.

\begin{defn}
A shift-invariant\footnote{In general, a measure does not need to be shift-invariant for being an MRF, but in this paper we will always assume shift-invariance.} measure $\mu$ on $\Symb^{\Z^d}$ is a \emph{Markov random field (MRF)} if, for any set $S \Subset \Z^d$, any $u \in \Symb^S$, any $T \Subset \Z^d$ s.t. $\partial S \subseteq T \subseteq \Z^d \backslash S$, and any $\delta \in \Symb^T$ with $\mu(\delta) > 0$, it is the case that:
\begin{equation}
\mu\left(u \middle\vert \delta\right) = \mu\left(u  \middle\vert \delta(\partial S)\right).
\end{equation}
\end{defn}

In other words, an MRF is a measure where every finite configuration conditioned to its boundary is independent of the complement. 

\begin{defn}
Given an MRF $\mu$, a set $S \Subset \Z^d$, and $\delta \in \Symb^{\partial S}$ with $\mu(\delta) > 0$,  $\mu^\delta$ will denote the measure on $\Symb^S$ such that:
\begin{equation}
\mu^{\delta}(u) := \mu\left(u \middle\vert \delta\right),
\end{equation}
for every $S' \subseteq S$ and $u \in \Symb^{S'}$.
\end{defn}

Now we discuss what a Gibbs measure is, though not in its most general form. The main characteristic of the families of measures presented here is their local nature, something that will be useful for developing efficient algorithms. We will deal mostly with (stationary) nearest-neighbour Gibbs measures, which are MRFs specified by nearest-neighbour interactions.

\begin{defn}
A \emph{nearest-neighbour (n.n.) interaction} is a shift-invariant function $\Phi$ from the set of configurations on vertices and edges in $\Z^d$ to $\R \cup \{\infty\}$. Here, shift-invariance means that $\Phi(\sigma_{p}(w)) = \Phi(w)$ for all finite configurations $w$ on edges and vertices, and all $p \in \Z^d$.
\end{defn}

Clearly, a n.n. interaction is defined by only finitely many numbers, namely the values of the interaction on configurations on $\{0\}$ and edges $\{0,e_i\}$, $i = 1,\dots,d$. W.l.o.g., we can assume that the values on vertices are not $\infty$ (if not, we remove such element from $\Symb$). However, it is meaningful to assume that $\Phi$ is $\infty$ on edges because these are what we call \emph{hard constraints}. For a n.n. interaction $\Phi$, we define its \emph{underlying SFT} as:
\begin{equation}
\mathsf{X}(\Phi) := \left\{x \in \Symb^{\Z^d}: \Phi(x(\{p,p+e_i\})) \neq \infty, \mbox{ for all } p \in \Z^d, i=1,\dots,d\right\}.
\end{equation}

Notice that $\mathsf{X}(\Phi)$ is a n.n. SFT.

\begin{defn}
For a n.n. interaction $\Phi$ and a set $S \Subset \Z^d$, the \emph{energy function} $U_{S}^{\Phi}: \Symb^S \to \R \cup \{\infty\}$ is:
\begin{equation}
U_{S}^{\Phi}(w) := \sum_{p \in S} \Phi(w(p)) + \sum_{e \subseteq S} \Phi(w(e)),
\end{equation}
where the second sum ranges over all edges $e$ contained in $S$. Given $S \Subset \Z^d$ and $\delta \in \Symb^{\partial S}$, we consider:
\begin{align}
Z_S^\Phi := \sum_{w \in \Symb^S} e^{-U_S^{\Phi}(w)},		&	\mbox{ and }	Z^{\Phi,\delta}_S := \sum_{w \in \Symb^S} e^{-U_S^{\Phi}(w\delta)},
\end{align}
where $Z_S^\Phi$ is known as the \emph{partition function} of $S$. Whenever $Z^{\Phi,\delta}_S > 0$, we say that $\delta$ is \emph{$S$-admissible}. For every $S$-admissible $\delta$, define:
\begin{equation}
\Lambda^{\delta}_S(w) := \frac{e^{-U_S^{\Phi}(w\delta)}}{Z^{\Phi,\delta}_S}.
\end{equation}

The collection $\Lambda = \{\Lambda^{\delta}_S\}_{S,\delta}$ is called a \emph{stationary $\Z^d$ Gibbs specification} for the n.n. interaction $\Phi$. Note that each $\Lambda^{\delta}_S$ is a probability measure on $\Symb^S$. For $S' \subseteq S$ and $u \in \Symb^{S'}$, we marginalize as follows:
\begin{equation}
\Lambda^\delta_S(u) = \sum_{v \in \Symb^{S \backslash S'}} \Lambda_S^{\delta}(uv).
\end{equation}
\end{defn}

\begin{defn}
A \emph{(stationary) nearest-neighbour (n.n.) Gibbs measure} for a n.n. interaction $\Phi$ is an MRF $\mu$ on $\Symb^{\Z^d}$ such that, for any finite set $S$ and $\delta \in \Symb^{\partial S}$, if $\mu(\delta) > 0$ then $\delta$ is $S$-admissible and:
\begin{equation}
\mu^{\delta}(w) = \Lambda_S^{\delta}(w),
\end{equation}
for $w \in \Symb^{S}$, where $\{\Lambda^{\delta}_S\}_{S,\delta}$ is the stationary $\Z^d$ Gibbs specification for $\Phi$.
\end{defn}

Every n.n. interaction $\Phi$ has at least one (stationary) n.n. Gibbs measure (special case of a general result in \cite{1-ruelle}). Often there are multiple Gibbs measures for a single $\Phi$. This phenomenon is usually called a \emph{phase transition}. There are several conditions that guarantee uniqueness of Gibbs measures. Some of them are introduced in the next section.

Many classical models can be expressed using this framework (all the following models are \emph{isotropic}, i.e. they have the same constraints in every coordinate direction $\{0,e_i\}$, for $i=1,\dots,d$):
\begin{itemize}
\item Ising model: $\Symb = \{-1,+1\}$, $\Phi(a) = -Ea$, $\Phi(ab) = -Jab$ for constants $E$ (external magnetic field) and $J$ (coupling strength).
\item Potts model: $\Symb = \{1,\dots,q\}$, $q \in \N$, $\Phi(a) = 0$, $\Phi(ab) = -J\delta_{ab}$, where $\delta_{ab}$ is the Kronecker delta.
\item Checkerboard shift: $\Symb = \{1,\dots,k\}$, $k \in \N$, $\Phi(a) = 0$, $\Phi(ab) = 0$ if $a \neq b$, and $\Phi(aa) = \infty$; this can be thought of as the limiting case of the $k$-state Potts model when $J \rightarrow -\infty$.
\item Hard-core model: $\Symb = \{0,1\}$, $\Phi(0) = 0$, $\Phi(1) = \beta$, $\Phi(00) = \Phi(10) = \Phi(01) = 0$, $\Phi(11) = \infty$. The parameter $\lambda = e^{-\beta}$ is called the \emph{activity}.
\end{itemize}

Given a n.n. SFT $X = \mathsf{X}(\mathcal{F})$, a \emph{uniform Gibbs measure} on $X$ is a Gibbs measure corresponding to the n.n. interaction which is $0$ on all n.n. configurations except the forbidden configurations in $\mathcal{F}$ (on which it is $\infty$).

Notice that for a Gibbs measure $\mu$ for $\Phi$, $\supp(\mu) \subseteq \mathsf{X}(\Phi)$. The interaction $\Phi$ is allowed to take the value $\infty$ in order to have Gibbs measures supported on proper subsets of $\Symb^{\Z^d}$. In the following, we introduce a mild property sufficient for having $\supp(\mu) = \mathsf{X}(\Phi)$.

\begin{defn}
\label{Dcond}
An SFT $X$ satisfies the \emph{D-condition} if there exist sequences of finite subsets $\left\{S_n\right\}_n$, $\left\{T_n\right\}_n$ of $\Z^d$ such that $S_n \nearrow \infty$, $S_n \subseteq T_n$, $\frac{|T_n|}{|S_n|} \rightarrow 1$, and for any $u \in \Leng_{U}(X)$, with $U \Subset T_{n}^c$, and $v \in \Leng_{S_n}(X)$, we have that $[u]_X \cap [v]_X \neq \emptyset$. Here, $S_n \nearrow \infty$ means that \emph{$\{S_n\}_n$ tend to infinity in the sense of van Hove}, this is to say, $|S_n| \rightarrow \infty$ and for each $p \in \Z^d$:
\begin{equation}
\lim_{n \rightarrow \infty} \frac{|S_n \triangle (p+ S_n)|}{|S_n|} = 0,
\end{equation}
where $\triangle$ denotes the symmetric difference.
\end{defn}

\begin{prop}[{\cite[Remark 1.14]{1-ruelle}}]
\label{dconsupp}
If $\Phi$ is a n.n. interaction and $\mathsf{X}(\Phi)$ satisfies the D-condition, then for any n.n. Gibbs measure $\mu$ for $\Phi$, we have that $\supp(\mu) = \mathsf{X}(\Phi)$.
\end{prop}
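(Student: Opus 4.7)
The plan is to establish $\mathsf{X}(\Phi) \subseteq \supp(\mu)$, since the reverse inclusion is automatic for any n.n. Gibbs measure for $\Phi$: forbidden edge configurations carry infinite energy, hence zero conditional weight. So the target is to show that for every $S \Subset \Z^d$ and every $v \in \Leng_S(\mathsf{X}(\Phi))$ one has $\mu([v]) > 0$. Using shift-invariance of $\mu$ together with the F\o{}lner content of van Hove (namely, $|\partial_k S_n|/|S_n| \to 0$ for each fixed $k$), after translating $v$ I may assume $S \subseteq S_n$ for some sufficiently large $n$; and by replacing $T_n$ with $T_n \cup \partial S_n$, which preserves both $|T_n|/|S_n| \to 1$ and the D-condition, I may further assume $\partial S_n \subseteq T_n$.

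The engine is a positivity lemma I would prove first: if $y \in \supp(\mu)$ and $x \in \mathsf{X}(\Phi)$ differ only on a finite set $F \Subset \Z^d$, then $x \in \supp(\mu)$. To see this, set $\delta := y(\partial F) = x(\partial F)$; since $y \in \supp(\mu)$ we have $\mu([\delta]) > 0$, so the Gibbs property yields $\mu^\delta = \Lambda_F^\delta$. Local admissibility of $x$, inherited from $x \in \mathsf{X}(\Phi)$, gives $U_F^\Phi(x(F)\delta) < \infty$, hence $\Lambda_F^\delta(x(F)) > 0$. For any finite $T \supseteq F \cup \partial F$ the MRF identity then produces
\begin{equation*}
\mu([x(T)]) \;=\; \mu([y(T \setminus F)]) \cdot \Lambda_F^\delta(x(F)) \;>\; 0,
\end{equation*}
using that $y \in \supp(\mu)$ forces $\mu([y(T \setminus F)]) > 0$. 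For arbitrary finite $T$ one compares with $T \cup F \cup \partial F$ and uses monotonicity of cylinders, obtaining $\mu([x(T)]) > 0$; hence $x \in \supp(\mu)$.

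To finish, I fix any $y \in \supp(\mu)$, which is nonempty because $\mu$ is a probability measure, and extend $v$ to some $v' \in \Leng_{S_n}(\mathsf{X}(\Phi))$. Choosing an exhaustion $U_k \nearrow T_n^c$ with each $U_k \Subset T_n^c$, the D-condition produces points $z_k \in \mathsf{X}(\Phi)$ with $z_k(S_n) = v'$ and $z_k(U_k) = y(U_k)$; compactness of $\mathsf{X}(\Phi)$ extracts a subsequential pointwise limit $z \in \mathsf{X}(\Phi)$ satisfying $z(S_n) = v'$ and $z = y$ on $T_n^c$. Thus $z$ differs from $y$ only on the finite set $T_n$, so the positivity lemma places $z$ in $\supp(\mu)$; since $z(S) = v$, this gives $\mu([v]) > 0$. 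The main obstacle I expect is precisely the positivity lemma: it simultaneously needs the MRF structure, to reduce the global support question to a boundary condition on $\partial F$, together with the strict positivity of the Gibbs specification on locally admissible configurations, and it is exactly this lemma which lets the purely combinatorial D-condition propagate into measure-theoretic fullness of support.
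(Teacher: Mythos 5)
Your proof is correct. Note that the paper offers no argument of its own here: Proposition \ref{dconsupp} is simply quoted from Ruelle's Remark 1.14, so what you have written is a self-contained reconstruction of the standard argument behind that citation, and it holds up. The two pillars are exactly right: (i) the ``finite-energy'' positivity lemma, where the MRF property reduces matters to the boundary configuration $\delta = y(\partial F)$, the Gibbs property identifies $\mu^{\delta}$ with $\Lambda_F^{\delta}$, and local admissibility of $x$ (inherited from $x \in \mathsf{X}(\Phi)$) makes $\Lambda_F^{\delta}(x(F)) > 0$; and (ii) the D-condition plus compactness, which produces a point $z \in \mathsf{X}(\Phi)$ carrying the prescribed configuration on $S_n$ while agreeing with a support point $y$ off the finite set $T_n$, so that the lemma applies. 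Your translation step is genuinely needed and correctly handled: van Hove convergence of $\{S_n\}$ does not force the $S_n$ to exhaust $\Z^d$, so one must use shift-invariance of $\mu$ together with the fact that a van Hove sequence eventually contains a translate of any fixed finite set. Two minor remarks: the normalization $\partial S_n \subseteq T_n$ is harmless but never used in your final gluing, so it can be dropped; and in the lemma it is cleanest to take $F$ to be exactly the (finite) disagreement set, so that $x(\partial F) = y(\partial F)$ is immediate --- though taking $F = T_n$ also works since $\partial T_n \subseteq T_n^c$.
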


\begin{note}
In \cite[Remark 1.14]{1-ruelle} is considered an assumption even weaker than the D-condition for having $\supp(\mu) = \mathsf{X}(\Phi)$.
\end{note}

We define topological pressure for interactions on a shift space $X$. In order to discuss connections between this definition and topological pressure for functions $f \in C(X)$, we need a mechanism for turning an interaction (which is a function on finite configurations) into a continuous function on the infinite configurations in $X$. We do this as follows for the special case of n.n. interactions $\Phi$. For $x \in \mathsf{X}(\Phi)$, define the (continuous) function:
\begin{equation}
A_\Phi(x) :=  -\Phi\left(x(0)\right) - \sum_{i=1}^{d} \Phi\left(x(\{0,e_i\})\right).
\end{equation}

\begin{defn}
For a n.n. interaction $\Phi$, the \emph{topological pressure of $\Phi$} is defined as:
\begin{equation}
P(\Phi) := \lim_{n \rightarrow \infty} \frac{1}{|\block_n|} \log Z^{\Phi}_{\block_n}.
\end{equation}
\end{defn}

It is well-known \cite[Corollary 3.13]{1-ruelle} that for any sequence such that $S_n \nearrow \infty$,
\begin{equation}
P(\Phi) = \lim_{n \rightarrow \infty} \frac{1}{|S_n|} \log Z^{\Phi}_{S_n}.
\end{equation}

A version of the variational principle (see \cite{1-keller,1-ruelle}) implies that the two definitions given here are equivalent in the sense that $P(\Phi) = P(A_\Phi)$. Notice that considering this, measures of maximal entropy are uniform Gibbs measures. In the case that $\mathsf{X}(\Phi)$ satisfies the D-condition, a measure on $\mathsf{X}(\Phi)$ is an equilibrium state for $A_\Phi$ if it is a Gibbs measure for $\Phi$ (the other direction is always true in the n.n. case \cite[Theorem 3]{1-ruelle}).

\section{Mixing properties}
\label{sec4}

In this section we proceed to introduce some mixing properties of measure-theoretic, combinatorial and topological kind. In general terms, a mixing property tells that, either a measure or the support of it (in most cases an SFT for our purposes), does not have strong long-range correlations. This last aspect will be key for obtaining succinct representations of entropy and pressure, and when developing efficient algorithms for approximating them.

\subsection{Spatial mixing}

The first two definitions are what we call here \emph{spatial mixing} properties, both related to MRFs. In the following, let $f(n):\N \rightarrow \R_{\geq 0}$ be a function such that $\lim_{n \rightarrow \infty} f(n) = 0$.
 
\begin{defn}
An MRF $\mu$ satisfies \emph{weak spatial mixing (WSM) with rate $f(n)$} if for any $W \Subset \Z^d$, $U \subseteq  W$, $u \in \Symb^U$ and $\delta_1,\delta_2 \in \Symb^{\partial W}$ with $\mu(\delta_1), \mu(\delta_2) > 0$,
\begin{equation}
\left| \mu^{\delta_1}(u) - \mu^{\delta_2}(u) \right| \leq \left|U\right|f(\dist(U,\partial W)).
\end{equation}
\end{defn}

Given a set $S \subseteq \Z^d$ and two configurations $s_1,s_2 \in \Symb^S$, the set of positions where they differ is denoted $\Sigma_{S}(s_1,s_2) := \left\{p \in S: s_1(p) \neq s_2(p)\right\}$. We also use the convention that $\dist(S,\emptyset) = \infty$. Considering this, we have the following definition, a priori stronger than WSM.

\begin{defn}
\label{SSMspec}
An MRF $\mu$ satisfies \emph{strong spatial mixing (SSM) with rate $f(n)$} if for any $W \Subset \Z^d$, $U \subseteq W$, $u \in \Symb^U$ and $\delta_1,\delta_2 \in \Symb^{\partial W}$ with $\mu(\delta_1), \mu(\delta_2) > 0$,
\begin{equation}
\left| \mu^{\delta_1}(u) - \mu^{\delta_2}(u) \right| \leq |U|f\left(\dist(U,\Sigma_{\partial W}(\delta_1,\delta_2))\right).
\end{equation}
\end{defn}

We will say that an MRF $\mu$ satisfies WSM (resp. SSM) if it satisfies WSM (resp. SSM) with rate $f(n)$, for some $f(n)$ as before.

\begin{note}
In the literature, it is also common to find the definition of WSM and SSM with the expression $\left| \mu^{\delta_1}(u) - \mu^{\delta_2}(u) \right|$ replaced by the total variation distance of $\mu^{\delta_1}$ and $\mu^{\delta_2}$ on $U$, denoted $\left\| \mu^{\delta_1}|_U - \mu^{\delta_2}|_U \right\|_{TV}$. The definitions here are, a priori, slightly weaker (so the results where SSM is an assumption are also valid for this alternative definition), but sufficient for our purposes.
\end{note}

\begin{lem}[{\cite[Lemma 2.3]{2-marcus}}]
\label{ssmSing}
Let $\mu$ be an MRF such that for any $W \Subset \Z^d$, $q \in W$, $u \in \Symb^{\{q\}}$ and $\delta_1,\delta_2 \in \Symb^{\partial W}$ with $\mu(\delta_1), \mu(\delta_2) > 0$,
\begin{equation}
\left|\mu^{\delta_1}(u) - \mu^{\delta_2}(u) \right| \leq f\left(\dist\left(q,\Sigma_{\partial W}(\delta_1,\delta_2)\right)\right).
\end{equation}

Then, $\mu$ satisfies SSM with rate $f(n)$.
\end{lem}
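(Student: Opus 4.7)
The strategy is a telescoping reduction from multi-site to single-site, using the MRF property to absorb successive conditionings into new boundary configurations, plus the elementary product-difference inequality.

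First I would fix an enumeration $U = \{q_1, \ldots, q_k\}$ and write $u_i := u(q_i)$. Assuming (temporarily) that every partial conditional $\mu^{\delta_j}(u_1, \ldots, u_{i-1})$ is positive for both $j=1,2$, factor
\[
\mu^{\delta_j}(u) = \prod_{i=1}^{k} \mu^{\delta_j}\!\left(u_i \,\middle|\, u_1,\ldots,u_{i-1}\right).
\]
The elementary identity $\left|\prod a_i - \prod b_i\right| \leq \sum_i |a_i - b_i|$, valid for $a_i, b_i \in [0,1]$, then reduces the problem to bounding each single-site difference
\[
D_i := \left|\mu^{\delta_1}\!\left(u_i \,\middle|\, u_1,\ldots,u_{i-1}\right) - \mu^{\delta_2}\!\left(u_i \,\middle|\, u_1,\ldots,u_{i-1}\right)\right|.
\]

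To bring each $D_i$ under the hypothesis, I would set $W_i := W \setminus \{q_1, \ldots, q_{i-1}\}$, which still contains $q_i$. A direct check gives $\partial W_i \subseteq \partial W \cup \{q_1, \ldots, q_{i-1}\}$, so one can define $\delta_j^{(i)} \in \Symb^{\partial W_i}$ by declaring it to equal $\delta_j$ on $\partial W \cap \partial W_i$ and $u$ on $\{q_1,\ldots,q_{i-1}\} \cap \partial W_i$. By the MRF property applied to $W_i$ with the larger conditioning set $\partial W \cup \{q_1, \ldots, q_{i-1}\} \supseteq \partial W_i$, we get $\mu^{\delta_j}(u_i \mid u_1,\ldots,u_{i-1}) = \mu^{\delta_j^{(i)}}(u_i)$. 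The hypothesis applied to $W_i$, $q_i$, $\delta_1^{(i)}$, $\delta_2^{(i)}$ then yields
\[
D_i \leq f\!\left(\dist\!\left(q_i,\ \Sigma_{\partial W_i}(\delta_1^{(i)}, \delta_2^{(i)})\right)\right).
\]
Because $\delta_1^{(i)}$ and $\delta_2^{(i)}$ agree by construction on $\{q_1,\ldots,q_{i-1}\} \cap \partial W_i$, their disagreement set is contained in $\Sigma_{\partial W}(\delta_1, \delta_2)$. Hence $\dist(q_i, \Sigma_{\partial W_i}(\delta_1^{(i)}, \delta_2^{(i)})) \geq \dist(U, \Sigma_{\partial W}(\delta_1, \delta_2))$. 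Assuming $f$ is non-increasing (WLOG, replacing $f$ by its non-increasing envelope, which still tends to $0$), summing the $k = |U|$ bounds gives the SSM inequality.

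The main obstacle, and the subtlety that needs a separate argument, is the degenerate case where some partial conditional $\mu^{\delta_j}(u_1, \ldots, u_{i-1})$ vanishes, so that the product factorization breaks down. In that situation $\mu^{\delta_j}(u) = 0$. Letting $i^\ast$ be the smallest index at which this happens (say for $j=2$), one still has $|\mu^{\delta_1}(u) - \mu^{\delta_2}(u)| \leq \mu^{\delta_1}(u_{i^\ast} \mid u_1, \ldots, u_{i^\ast - 1})$, and this last quantity equals $D_{i^\ast}$ because $\mu^{\delta_2}(u_{i^\ast} \mid u_1, \ldots, u_{i^\ast-1}) = 0$; the single-site hypothesis at step $i^\ast$ then delivers the bound $f(\dist(U, \Sigma_{\partial W}(\delta_1, \delta_2))) \leq |U| f(\dist(U, \Sigma_{\partial W}(\delta_1, \delta_2)))$. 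The remaining verifications --- the inclusion $\partial W_i \subseteq \partial W \cup \{q_1, \ldots, q_{i-1}\}$ and the fact that the hypothesis applies to arbitrary $W_i \Subset \Z^d$, not only to the original $W$ --- are routine.
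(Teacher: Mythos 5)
Your argument is correct and is essentially the proof this paper points to: the lemma is quoted from \cite{2-marcus} without proof here, and the argument there is exactly this factorization of $\mu^{\delta_j}(u)$ into single-site conditional probabilities, the elementary bound $\left|\prod_i a_i - \prod_i b_i\right| \leq \sum_i |a_i - b_i|$ for $a_i,b_i \in [0,1]$, and the MRF property to rewrite each factor as $\mu^{\delta_j^{(i)}}(u_i)$ on $W_i = W \setminus \{q_1,\dots,q_{i-1}\}$, the remark following the lemma only noting that passing from exponential rate to a general rate is direct. Two small points to tidy up: the final comparison $f(\dist(q_i,\Sigma_{\partial W_i}(\delta_1^{(i)},\delta_2^{(i)}))) \leq f(\dist(U,\Sigma_{\partial W}(\delta_1,\delta_2)))$ needs $f$ non-increasing (harmless for the exponential rates used in the paper, while for a general $f$ your envelope trick literally yields SSM with rate $\tilde{f}(n) = \sup_{m \geq n} f(m)$ rather than $f$), and in the degenerate case the index $i^\ast$ should be chosen as the first one with $\mu^{\delta_2}(u_1 \cdots u_{i^\ast}) = 0$ but $\mu^{\delta_2}(u_1 \cdots u_{i^\ast-1}) > 0$, so that the vanishing conditional $\mu^{\delta_2}(u_{i^\ast} \mid u_1,\dots,u_{i^\ast-1})$ is actually well defined, the subcase where the corresponding $\delta_1$-partial cylinder also vanishes being trivial since then both probabilities are $0$.
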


\begin{rem}
The proof of Lemma \ref{ssmSing} given in \cite{2-marcus} is for MRFs satisfying exponential SSM (see Definition \ref{defnExp}), but its generalization is direct.
\end{rem}

\begin{figure}[ht]
\centering
\includegraphics[scale = 0.95]{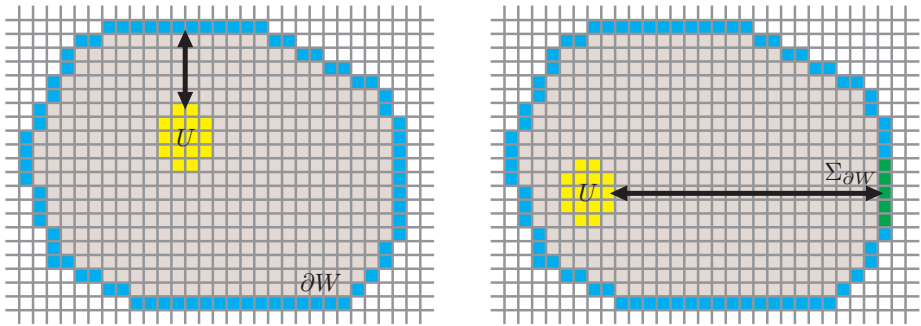}
\caption{The weak and strong spatial mixing properties.}
\label{wsm}
\end{figure}

Notice that SSM implies WSM. If a Gibbs measure $\mu$ for an interaction $\Phi$ satisfies WSM (and $X_\Phi$, the D-condition), then $\mu$ is unique for $\Phi$ \cite{2-weitz}. Also, note that by definition, a necessary condition for $\mu(\delta) > 0$ is $S$-admissibility of $\delta \in \Symb^{\partial S}$. While there may be no finite procedure for determining if a configuration $\delta$ has positive measure, there is a finite procedure for determining if $\delta$ is $S$-admissible. This is an issue that we will have to deal with, especially when developing algorithms (see Section \ref{PressureApp}).

\begin{defn}
\label{defnExp}
An MRF $\mu$ satisfies \emph{exponential WSM} (resp. \emph{exponential SSM}) if it satisfies WSM (resp. SSM) with rate $f(n) = Ce^{-\alpha n}$, for some constants $C,\alpha > 0$.
\end{defn}

There are some well-known models that satisfy exponential SSM:
\begin{itemize}
\item Ising model in $\Z^2$ without external field and $\beta < \beta_c$ (see \cite{1-martinelli}).
\item Anti-ferromagnetic Potts model on $\Z^2$ for $q \geq 6$ (see \cite{2-goldberg}).
\item Checkerboard shift on $\Z^2$ for $k \geq 6$ (see \cite{1-goldberg}).
\item Hard-core model on $\Z^d$ for $\lambda < \lambda_c(2d)$ (see \cite{1-weitz}).
\end{itemize}

There are more general sufficient conditions for having SSM at exponential rate (for instance, see the discussion in \cite{2-marcus}).

\subsection{Measure-theoretic mixing}

A well-known notion of measure-theoretic mixing in ergodic theory (see \cite{1-walters}) is the following one.

\begin{defn}
A shift-invariant measure $\mu$ on a shift space $X$ is \emph{mea\-su\-re-theo\-re\-tic strong mixing} if for any pair of non-empty (disjoint) $U,V \Subset \Z^d$ and for every $u \in \Symb^{U}$, $v \in \Symb^{V}$:
\begin{equation}
\lim_{\|p\| \to \infty} \mu\left([u] \cap \sigma_{-p}([v])\right) =  \mu\left(u\right) \mu\left(v\right).
\end{equation}
\end{defn}

In Section \ref{sec6}, is provided a connection between this and the preceding spatial mixing properties.

\subsection{Topological mixing}

Now we introduce two topological mixing properties that, in this context, will be usually related with the support of an MRF. A very important characteristic of them is that both are conjugacy invariants (see Definition \ref{conj}).

\begin{defn}
\label{topMix}
A shift space $X$ is \emph{topologically mixing} if for any pair of non-empty (disjoint) $U,V \Subset \Z^d$ there exists a separation constant $g(U,V) \in \N$ so that for every $u \in \Symb^{U}$, $v \in \Symb^{V}$ and any $p \in \Z^d$ such that $\dist(U,p+V) \geq g(U,V)$,
\begin{equation}
[u]_X,[v]_X \neq \emptyset \implies [u]_X \cap \sigma_{-p}([v]_X) \neq \emptyset.
\end{equation}
\end{defn}

\begin{defn}
A shift space $X$ is said to be \emph{strongly irreducible with gap $g \in \N$} if for any pair of non-empty (disjoint) finite subsets $U,V \Subset \Z^d$ with separation $\dist(U,V) \geq g$, and for every $u \in \Symb^{U}$, $v \in \Symb^{V}$,
\begin{equation}
[u]_X,[v]_X \neq \emptyset \implies [uv]_X \neq \emptyset.
\end{equation}
\end{defn}

\begin{rem}
Since a shift space is a compact space, it does not make a difference if the shapes of $U$ and $V$ are allowed to be infinite in the definition of strong irreducibility.
\end{rem}

\begin{figure}[ht]
\centering
\includegraphics[scale = 0.45]{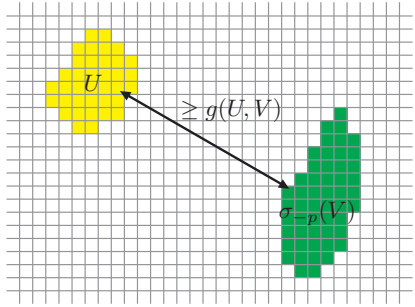}
\caption{The topological mixing property. Strong irreducibility means that $g(U,V)$ can be chosen to be uniform in $U$ and $V$.}
\label{TopMix}
\end{figure}

A first relation between mixing properties and computability of entropy is given by the following theorem.

\begin{thm}[\cite{1-hochman}]
\label{mixComp}
For any strongly irreducible $\Z^d$ SFT $X$, $h(X)$ is computable.
\end{thm}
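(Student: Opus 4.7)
The plan is to exhibit two effectively computable sequences of rationals, $U_n$ and $L_n$, with $U_n \searrow h(X)$ and $L_n \nearrow h(X)$. Given a target precision $1/k$, the algorithm searches through $n$ until $U_n - L_n < 1/k$ and outputs $L_n$; this witnesses computability of $h(X)$.

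For the upper bounds, Theorem \ref{thm-friedland} applies directly: set
\[
U_n := \frac{\log\left|\Leng^{\rm l.a.}_{\block_n}(\mathcal{F})\right|}{|\block_n|},
\]
which is effectively computable from any finite family $\mathcal{F}$ defining $X$, nonincreasing in $n$, and converges to $h(X)$. The lower bounds come from a packing argument driven by the strong irreducibility gap $g$. Fix $n$ and set $s = 2n+1+g$. Inside $\block_N$, place the disjoint translates $p+\block_n$ for $p$ in the grid $s\Z^d \cap \block_N$; these are pairwise separated by at least $g$, and there are $k_N \sim (N/s)^d$ of them. Iterating the defining property of strong irreducibility, any independent assignment of globally admissible $\block_n$-configurations to these translates extends simultaneously to some point of $X$, giving
\[
\left|\Leng_{\block_N}(X)\right| \;\geq\; \left|\Leng_{\block_n}(X)\right|^{k_N}.
\]
Dividing by $|\block_N|$ and letting $N\to\infty$ yields the \emph{a priori} bound
\[
h(X) \;\geq\; \frac{\log\left|\Leng_{\block_n}(X)\right|}{(2n+1+g)^d}.
\]

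The main obstacle is that $\left|\Leng_{\block_n}(X)\right|$ is \emph{a priori} only $\Pi_1$-computable: one can enumerate configurations locally admissible on ever-larger $\block_N$ and restrict to $\block_n$, producing a decreasing sequence of \emph{upper} bounds on $\left|\Leng_{\block_n}(X)\right|$, but I need a computable \emph{lower} bound. Strong irreducibility is used a second time here to certify global admissibility by a finite procedure. Concretely, I would first construct effectively a periodic point $y^\ast \in X$ (strong irreducibility together with $X\neq\emptyset$ forces periodic points with period bounded computably in terms of $g$ and $|\Symb|$), and then define a computable sublanguage $\hat{L}_n \subseteq \Leng_{\block_n}(X)$ as those $u \in \Symb^{\block_n}$ whose concatenation with $y^\ast$ on $\partial_g\block_n$ is locally admissible; the globally admissible collar $y^\ast(\partial_g\block_n)$ can be glued to any such $u$ via strong irreducibility to produce an element of $X$, certifying $u \in \Leng_{\block_n}(X)$. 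Substituting $\left|\hat{L}_n\right|$ for $\left|\Leng_{\block_n}(X)\right|$ in the packing bound then gives a computable $L_n$, and the hardest quantitative step, a final application of strong irreducibility showing that most globally admissible $\block_n$-configurations are of this spliceable form as $n \to \infty$, yields $L_n \to h(X)$, completing the argument.
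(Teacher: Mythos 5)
The paper itself offers no proof of Theorem \ref{mixComp} (it is quoted from Hochman--Meyerovitch), so your argument has to stand on its own, and it has a genuine gap at exactly the step that carries all the weight: the ``effective construction of a periodic point $y^\ast$'' from strong irreducibility alone. Existence of periodic points in non-empty strongly irreducible $\Z^d$ SFTs is an \emph{open problem} for $d \geq 3$ --- the paper says so explicitly in the note following Proposition \ref{TSSMperiod} --- let alone existence with period bounded computably in $g$ and $|\Symb|$. This is precisely why the paper proves periodic points (Proposition \ref{TSSMperiod}) and the global-admissibility algorithm (Corollary \ref{globAdm}) only under the stronger hypothesis of TSSM, and why it remarks after Corollary \ref{globAdm} that for $d \geq 3$ no good bounds are known for checking global admissibility assuming only strong irreducibility. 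Your upper bounds $U_n$ and the packing inequality $h(X) \geq \frac{\log|\Leng_{\block_n}(X)|}{(2n+1+g)^d}$ are fine and standard, but you correctly identified that $|\Leng_{\block_n}(X)|$ is a priori only upper semi-computable, and the device you propose to remove that obstruction rests on an unproved (indeed open) assertion; without $y^\ast$ the computable sublanguage $\hat{L}_n$ and hence the certified lower bounds $L_n$ never get off the ground.

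Even granting a periodic point, two further steps are off. First, asking that $u$ concatenated with $y^\ast$ on $\partial_g\block_n$ be locally admissible, with $y^\ast$ placed immediately adjacent to $u$, is not what strong irreducibility gives: it glues two \emph{globally} admissible configurations at distance at least $g$, so the correct test (compare Lemma \ref{lemPer} and the proof of Corollary \ref{globAdm}) existentially quantifies over a filling collar $v$ of width $g$ between $u$ and the periodic point; with that test \emph{every} $u \in \Leng_{\block_n}(X)$ passes, so no ``most configurations are spliceable'' estimate is needed --- whereas with your adjacent-collar version $\hat{L}_n$ may omit globally admissible patterns that happen to be incompatible with $y^\ast$ at distance $1$, and nothing in your sketch bounds that loss. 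Second, ``glue via strong irreducibility'' is also the wrong justification for the certification direction, since $u$ is not yet known to be globally admissible and so cannot be one of the two pieces being glued; the correct argument is that the completed configuration ($u$, collar, $y^\ast$ outside) is locally admissible everywhere and hence a point of the SFT, which additionally requires the collar width to dominate the range of the defining forbidden patterns (harmless after passing to a nearest-neighbour higher-block presentation, but it must be said, and note TSSM is not conjugacy-invariant while strong irreducibility is). In short, the skeleton (Theorem \ref{thm-friedland} upper bounds plus packing lower bounds) is right, but the certification of global admissibility --- the actual content of the theorem --- is not established by your argument for $d \geq 3$.
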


\subsection{Combinatorial mixing}

The following two properties have in common their local and combinatorial nature related with n.n. constraints. They both also have global scale implications (like, for example, strong irreducibility).

\begin{defn}
Given an alphabet $\Symb$, a list of n.n. forbidden configurations $\mathcal{F}$ and the corresponding n.n. SFT $X = \mathsf{X}(\mathcal{F})$, we say that $a \in \Symb^{\{0\}}$ is a \emph{safe symbol} for $X$ if $\eta{a}$ is locally admissible for every configuration $\eta \in \Symb^{\partial\{0\}}$.
\end{defn}

\begin{exmp}[A n.n. SFT with a safe symbol]
In the support of the $\Z^d$ hard-core model (the n.n. SFT $\mathcal{H}_d$), $0$ is a safe symbol for every $d$ (see \cite{1-gamarnik}).
\end{exmp}

\begin{defn}
\label{ssf}
A n.n. SFT $X$ is \emph{single-site fillable (SSF)} if for some list $\mathcal{F}$ of n.n. forbidden configurations such that $X = \mathsf{X}(\mathcal{F})$, for every $\eta \in \Symb^{\partial\{0\}}$, there exists $a \in \Symb^{\{0\}}$ such that $\eta{a}$ is locally admissible.
\end{defn}

\begin{note}
A n.n. SFT $X$ satisfies SSF if and only if for some forbidden list $\mathcal{F}$ of nearest neighbours that defines $X$, every locally admissible configuration is globally admissible \cite{1-marcus}.
\end{note}

In the definition of SSF above, the symbol $a$ may depend on the configuration $\eta$. Clearly, a n.n. SFT containing a safe symbol satisfies SSF. Also, it is easy to check that a n.n. SFT $X$ that satisfies SSF is strongly irreducible with gap $g = 2$.

\begin{exmp}[A n.n. SFT that satisfies SSF without a safe symbol]
\label{exmpC4}
The $\Z^d$ $k$-checkerboard shift $\mathcal{C}_d(k)$ has no safe symbol for any $k \geq 2$ and $d \geq 1$. However, $\mathcal{C}_d(k)$ satisfies SSF if and only if $k \geq 2d+1$ (see \cite{1-marcus}).
\end{exmp}

\section{Topological strong spatial mixing}
\label{tssmproperties}

Now we introduce a new mixing property, somehow an hybrid between the topological and combinatorial properties from last section. Because of its close relationship with topological Markov fields (see, for example, \cite{2-chandgotia}), we prefer to use the word topological for naming it. This condition will be used to generalize results related with pressure representation and approximation (discussed in Section \ref{PressureRep} and Section \ref{PressureApp}), and also to give a partial characterization of systems that admit measures satisfying SSM.

\begin{defn}
A shift space $X$ satisfies \emph{topological strong spatial mixing with gap $g \in \N$}, if for any $U,V,S \Subset \Z^d$ such that $\dist(U,V) \geq g$, and for every $u \in \Symb^{U}$, $v \in \Symb^{V}$ and $s \in \Symb^{S}$,
\begin{equation}
[us]_X,[sv]_X \neq \emptyset \implies [usv]_X \neq \emptyset.
\end{equation}
\end{defn}

Notice that TSSM implies strong irreducibility (by taking $S = \emptyset$). The difference here is that we allow an arbitrarily close globally admissible configuration on $S$ in between two sufficiently separated globally admissible configurations, provided that each of the two configurations is compatible with the one on $S$, individually. Clearly, TSSM with gap $g$ implies TSSM with gap $g+1$. We will say that a shift space satisfies TSSM if it satisfies TSSM with gap $g$, for some $g \in \N$.

It can be checked that for a n.n. SFT (all implications are strict):
\begin{equation}
\label{mixImpl}
\mbox{Safe symbol} \implies \mbox{SSF} \implies \mbox{TSSM} \implies \mbox{Strong irred.} \implies \mbox{Top. mixing}.
\end{equation}

See Section \ref{examples} for examples that illustrate the differences among some of these conditions.

\subsection{Characterizations and properties of TSSM}

A useful tool when dealing with TSSM is the next lemma.

\begin{lem}
\label{lemSing}
Let $X$ be a shift space and $g \in \N$ such that for every pair of sites $p,q \in \Z^d$ with $\dist(p,q) \geq g$ and $S \Subset \Z^d$, we have that for every $u \in \Symb^{\{p\}}$, $v \in \Symb^{\{q\}}$ and $s \in \Symb^{S}$ with $[us]_X,[sv]_X \neq \emptyset$, then $[usv]_X \neq \emptyset$. Then, $X$ satisfies TSSM with gap $g$.
\end{lem}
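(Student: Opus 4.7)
The plan is to prove the lemma by a double induction that progressively absorbs sites from $U$ and $V$ into the separating shape $S$, applying the singleton hypothesis one site at a time. Throughout, I will assume $U$, $V$, $S$ are pairwise disjoint (otherwise the concatenation notation is ambiguous), which is implicit in the TSSM definition.

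First, I would establish the intermediate claim: for every $n \geq 1$, whenever $|U| = n$ and $V = \{q\}$ with $\dist(U,q) \geq g$, and $u \in \Symb^U$, $v \in \Symb^{\{q\}}$, $s \in \Symb^S$ satisfy $[us]_X, [sv]_X \neq \emptyset$, then $[usv]_X \neq \emptyset$. The base case $n = 1$ is precisely the hypothesis of the lemma. For the inductive step, I pick any site $p \in U$, write $U = U' \cup \{p\}$ and $u = u' u(p)$. Since $[us]_X \neq \emptyset$, marginalization gives $[u's]_X \neq \emptyset$, and together with $[sv]_X \neq \emptyset$ and $\dist(U',q) \geq g$, the inductive hypothesis produces $[u'sv]_X \neq \emptyset$. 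Now I apply the singleton hypothesis to the sites $p$ and $q$ with the enlarged shape $\tilde{S} := S \cup U'$ and separating configuration $\tilde{s} := s u' \in \Symb^{\tilde{S}}$: the requirements $[u(p)\tilde{s}]_X = [us]_X \neq \emptyset$ and $[\tilde{s} v]_X = [u'sv]_X \neq \emptyset$ are met, so $[u(p)\tilde{s}v]_X = [usv]_X \neq \emptyset$, completing the induction.

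Next, I would remove the restriction that $V$ be a singleton by a second induction, this time on $m = |V|$. The case $m = 0$ is trivial and the case $m = 1$ is covered by the intermediate claim above. For $m \geq 2$, pick $q \in V$, write $V = V' \cup \{q\}$ and $v = v' v(q)$. Marginalizing $[sv]_X$ yields $[sv']_X \neq \emptyset$, so with $[us]_X \neq \emptyset$ and $\dist(U,V') \geq g$, the inductive hypothesis delivers $[usv']_X \neq \emptyset$. Then I invoke the intermediate claim for $V'' := \{q\}$ with enlarged shape $S' := S \cup V'$ and separating configuration $s' := sv'$: since $[us']_X = [usv']_X \neq \emptyset$ and $[s' v(q)]_X = [sv]_X \neq \emptyset$ and $\dist(U,q) \geq g$, we conclude $[us'v(q)]_X = [usv]_X \neq \emptyset$, yielding TSSM with gap $g$.

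There is no real conceptual obstacle; the main care lies in bookkeeping. At each induction step I absorb a site from $U$ (or $V$) into the separating shape and must verify that the new ``separating'' configuration is simultaneously consistent with the remaining configuration on both sides, and this verification is automatic from marginalization of existing nonempty cylinders together with the just-applied inductive hypothesis. The disjointness of $U$, $V$, $S$ is preserved throughout since the sites being transferred originate in $U$ or $V$ and so are disjoint from the other parts.
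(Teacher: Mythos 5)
Your proof is correct and follows essentially the same strategy as the paper's: reduce to the singleton hypothesis by absorbing already-placed sites into the separating configuration, with the required nonemptiness of the intermediate cylinders supplied by marginalization and the inductive hypothesis. The only difference is organizational --- you run two nested inductions (first on $|U|$ with $V$ a singleton, then on $|V|$), while the paper runs a single induction on $|U|+|V|$, peeling one site from each of $U$ and $V$ per step and invoking the singleton case with $s' = u'sv'$ on $U' \cup S \cup V'$ --- which amounts to the same bookkeeping.
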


\begin{proof}
We proceed by induction. The base case $|U|+|V| = 2$ is given by the hypothesis of the lemma. Now, let's suppose that the property is true for subsets $U,V \Subset \Z^d$ such that $|U|+|V| \leq n$ and let's prove it for the case when $|U|+|V| = n+1$.

Given $U,V,S \Subset \Z^d$ with $S \Subset \Z^d$, $\dist(U,V) \geq g$ and $|U|+|V| = n+1$, and given $u \in \Symb^{U}$, $v \in \Symb^{V}$ and $s \in \Symb^{S}$, we can write $U = \left\{p_1,\dots,p_k\right\}$ and $V = \left\{q_1,\dots,q_m\right\}$, where $|U| = k$, $|V| = m$, $k,m \geq 1$, and $k+m = n+1$. Let's consider $U' = U \backslash \{p_k\}$ and $V' = V \backslash \{q_m\}$, possibly empty sets (but not both empty at the same time, since we can assume that $|U|+|V| > 2$). Similarly, let's consider the restrictions $u' = u(U')$ and $v' = v(V')$. By the induction hypothesis, we have that $[usv']_X, [u'sv]_X \neq \emptyset$ (even in the case $U'$ or $V'$ being empty). Then, if we consider $s' = u'sv'$ on $S' = U' \cup S \cup V'$, we can apply the property for singletons with $u(p_k)$ and $v(q_m)$, and we conclude that $\emptyset \neq [u(p_k)s'v(q_m)]_X = [usv]_X$.
\end{proof}

\begin{rem}
Notice that Lemma \ref{lemSing} states that if we have the TSSM property for singletons, then we have it uniformly (in terms of separation distance) for any pair of finite sets $U$ and $V$.
\end{rem}

\begin{figure}[ht]
\centering
\includegraphics[scale = 0.7]{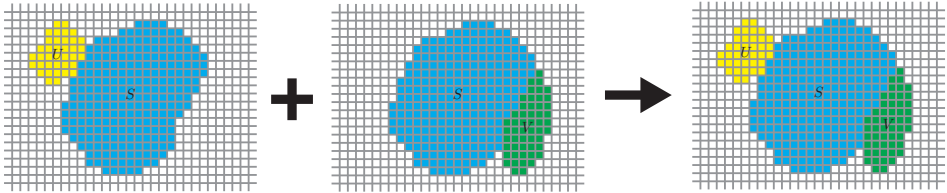}
\caption{The topological strong spatial mixing property.}
\label{TSSM}
\end{figure}

\begin{defn}
Given a shift space $X$ and $W \subseteq \Z^d$, a configuration $w \in \Symb^W$ is called a \emph{first offender for $X$} if $w \notin \Leng(X)$ and $w(S) \in \Leng(X)$, for every $S \subsetneq W$. We define the \emph{set of first offenders of $X$} as:
\begin{equation}
\mathcal{O}(X) := \bigcup_{0 \in W \Subset \Z^d}\left\{w \in \Symb^W \middle\vert w \mbox{\rm ~is a first offender for } X\right\}.
\end{equation}
\end{defn}

\begin{note}
When $d=1$, a similar notion of first offender can be found in \cite[Exercise 1.3.8]{1-lind}, where it is used to characterize a ``minimal'' family $\mathcal{F}$ inducing an SFT $X$.
\end{note}

\begin{prop}
Let $X$ be a shift space. Then $X$ satisfies TSSM iff $\left|\mathcal{O}(X)\right| < \infty$.
\end{prop}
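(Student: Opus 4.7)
The plan is to prove both directions via the same structural observation: a first offender whose shape has large diameter admits a decomposition that contradicts TSSM, so TSSM is precisely the condition that forces first offenders to have bounded diameter (and hence to form a finite set, since they must contain $0$).

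For the forward direction, assume $X$ satisfies TSSM with gap $g$ and let $w \in \Symb^W$ be a first offender. I would show $\diam(W) < g$, which immediately confines every shape of a first offender to $\romb_{g-1}$ (as $0 \in W$), giving finiteness of $\mathcal{O}(X)$. Suppose instead $\diam(W) \geq g$; pick $p, q \in W$ with $\dist(p,q) \geq g$ and decompose $W = \{p\} \sqcup S \sqcup \{q\}$ with $S = W \setminus \{p,q\}$. Setting $u = w(p)$, $v = w(q)$, $s = w(S)$, the first-offender condition gives $us = w(W \setminus \{q\}) \in \Leng(X)$ and $sv = w(W \setminus \{p\}) \in \Leng(X)$, so TSSM yields $w = usv \in \Leng(X)$, a contradiction.

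For the reverse direction, assume $\mathcal{O}(X)$ is finite. If $\mathcal{O}(X) = \emptyset$ then $X = \Symb^{\Z^d}$ and TSSM is trivial, so otherwise set $g := 1 + \max\{\diam(W) : w \in \mathcal{O}(X),\ w \in \Symb^W\}$. By Lemma \ref{lemSing}, it is enough to verify TSSM with $U = \{p\}$ and $V = \{q\}$. Fix such $p, q$ with $\dist(p,q) \geq g$, fix $S \Subset \Z^d$, and fix $u, v, s$ with $us, sv \in \Leng(X)$; suppose for contradiction $usv \notin \Leng(X)$. Among the subsets $W' \subseteq \{p\} \cup S \cup \{q\}$ with $(usv)(W') \notin \Leng(X)$, choose one $W'_0$ of minimal cardinality. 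Translating $W'_0$ so that it contains $0$ (using shift-invariance of $X$ and hence of $\Leng(X)$) produces an element of $\mathcal{O}(X)$, so $\diam(W'_0) \leq g-1 < \dist(p,q)$. Therefore $W'_0$ cannot contain both $p$ and $q$; assume without loss of generality $q \notin W'_0$. Then $W'_0 \subseteq \{p\} \cup S$, so $(usv)(W'_0) = (us)(W'_0)$ is a restriction of $us \in \Leng(X)$, hence lies in $\Leng(X)$, a contradiction.

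The only delicate step is the translation argument in the reverse direction: one must check that a minimal non-admissible sub-configuration, after shifting its shape to contain $0$, is indeed a first offender. This uses shift-invariance of $X$ (to transfer both non-admissibility and the admissibility of proper restrictions) together with the fact that $\Leng(X)$ is closed under restriction to sub-shapes. Everything else is a matter of bookkeeping: the forward direction is a direct application of TSSM to a split of the shape $W$, and the reverse direction is a pigeonhole on the diameter $\dist(p,q) \geq g$ combined with the induction already packaged inside Lemma \ref{lemSing}.
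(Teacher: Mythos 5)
Your argument is correct and follows essentially the same route as the paper: the forward direction splits a large-diameter first offender as $usv$ and applies TSSM, and the reverse direction extracts a minimal non-admissible sub-configuration of $usv$, recognizes it as a (translated) first offender of bounded diameter, and concludes via Lemma \ref{lemSing}. The only cosmetic difference is that you minimize over all subsets of $\{p\} \cup S \cup \{q\}$ by cardinality, whereas the paper keeps $u$ and $v$ and minimizes only over $S' \subseteq S$; both yield a first offender and the same contradiction, so this is not a substantive departure.
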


\begin{proof}
First, suppose that $X$ satisfies TSSM with gap $g$, for some $g \in \N$, and take $w \in \mathcal{O}(X)$ with shape $W$ such that $0 \in W$. By contradiction, assume that $\diam(W) \geq g$ and let $p,q \in W$ be such that $\dist(p,q) = \diam(W)$. Then, since $w$ is a first offender, we have that $[w(p)w(S)]_X,[w(S)w(q)]_X \neq \emptyset$, for $S = W \backslash \{p,q\} \subsetneq W$. Since $\dist(p,q) \geq g$, by TSSM, we have that $[w]_X = [w(p)w(S)w(q)]_X \neq \emptyset$, which is a contradiction. Then, $\diam(W) < g$ and, since $0 \in W$, we have that $\left|\mathcal{O}(X)\right| \leq |\Symb|^{(2g+1)^d} < \infty$.

Now, suppose that $\left|\mathcal{O}(X)\right| < \infty$ and take:
\begin{equation}
R = \max_{0 \in W \Subset \Z^d}\left\{\dist(0,q): q \in W, \Symb^W \cap \mathcal{O}(X) \neq \emptyset\right\} < \infty,
\end{equation} which is well defined thanks to the assumption. Consider arbitrary $p,q \in \Z^d$ and $S \Subset \Z^d$, with $\dist(p,q) \geq R+1$, and take $u \in \Symb^{\{p\}}$, $s \in \Symb^S$, $v \in \Symb^{\{q\}}$ such that $[us]_X,[sv]_X \neq \emptyset$. W.l.o.g., by shift-invariance, we can take $p = 0$. Now, by contradiction, assume that $[usv]_X = \emptyset$. Consider a minimal $S' \subseteq S$ such that $[us(S')v]_X = \emptyset$ and $[us(S'')v]_X \neq \emptyset$, for all $S'' \subsetneq S'$ (this includes the case $S' = \emptyset$, where the condition over $S''$ is vacuously true). It is direct to check that $us(S')v$ is a first offender with shape $W = \{0,q\} \cup S'$. Then, since $\dist(0,q) = \dist(p,q) \geq R+1$, we have a contradiction with the definition of $R$. Therefore, thanks to Lemma \ref{lemSing}, $X$ satisfies TSSM with gap $R+1$.
\end{proof}

Notice that $X = \mathsf{X}(\mathcal{O}(X))$. Considering this, we have the following corollary.

\begin{cor}
\label{TSSMSFT}
Let $X$ be a shift space that satisfies TSSM. Then, $X$ is an SFT.
\end{cor}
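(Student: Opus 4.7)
The plan is to combine the previous proposition with the identity $X = \mathsf{X}(\mathcal{O}(X))$ stated immediately before the corollary. By the preceding proposition, TSSM implies that $\mathcal{O}(X)$ is a finite set of configurations. Thus, if I can justify that $X$ coincides with $\mathsf{X}(\mathcal{O}(X))$, then $X$ is defined by a finite forbidden list and therefore is an SFT by definition.

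First I would verify the inclusion $X \subseteq \mathsf{X}(\mathcal{O}(X))$: any first offender $w$ has shape $W$ with $w \notin \Leng(X)$, so no shift of any $x \in X$ can contain $w$ as a sub-configuration; hence $x \in \mathsf{X}(\mathcal{O}(X))$.

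For the reverse inclusion $\mathsf{X}(\mathcal{O}(X)) \subseteq X$, I would argue contrapositively. Suppose $x \notin X$. Since $X$ is closed and shift-invariant, and $\Leng(X)$ consists of the globally admissible finite configurations, there must exist some $T \Subset \Z^d$ such that $x(T) \notin \Leng(X)$. Among all such $T$, pick one that is minimal under inclusion; then $x(T') \in \Leng(X)$ for every $T' \subsetneq T$. Picking any $p \in T$ and translating so that $p$ maps to $0$, the configuration $\sigma_p(x)(T-p)$ is a first offender for $X$ with $0$ in its shape, hence lies in $\mathcal{O}(X)$. Consequently $x \notin \mathsf{X}(\mathcal{O}(X))$, establishing the needed inclusion.

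Combining these two steps, $X = \mathsf{X}(\mathcal{O}(X))$, and since $\mathcal{O}(X)$ is finite by the proposition, $X$ is an SFT. There is no real obstacle here; the only subtlety is the reverse inclusion, and even that reduces to extracting a minimal ``bad'' shape from a point outside $X$ and reading it as a first offender after a suitable shift.
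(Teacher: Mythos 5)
Your proposal is correct and follows essentially the same route as the paper: the paper's argument is precisely the remark $X = \mathsf{X}(\mathcal{O}(X))$ combined with the preceding proposition that TSSM forces $\left|\mathcal{O}(X)\right| < \infty$. You merely spell out the verification of the identity $X = \mathsf{X}(\mathcal{O}(X))$ (via compactness and a minimal bad shape), which the paper asserts without proof, and that verification is sound.
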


\begin{note}
If $X \subseteq \Symb^{\Z^d}$ is a shift space that satisfies TSSM with gap $g$, then it can be checked that $X$ is an SFT that can be defined by a family of forbidden configurations $\mathcal{F} \subseteq \Symb^{\romb_g}$.
\end{note}

The next lemma provides another characterization of TSSM for SFTs.

\begin{lem}
\label{rombTSSM}
Let $X = \mathsf{X}(\mathcal{F})$ be an SFT, with $\mathcal{F} \subseteq \Symb^{\romb_N}$ for some $N \in \N$. Then, $X$ satisfies TSSM with gap $g$ if and only if for all $S \subseteq \romb_{g+N-1} \backslash \{0\}$,
\begin{equation}
\label{rombg}
\forall u \in \Symb^{\{0\}}, s \in \Symb^S, v \in \Symb^{\partial_{2N+1}\romb_{g-1} \backslash S}: [us]_X,[sv]_X \neq \emptyset \implies [usv]_X \neq \emptyset.
\end{equation}
\end{lem}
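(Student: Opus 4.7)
The forward direction is immediate: assuming TSSM with gap $g$, I invoke it with $U := \{0\}$, the given set $S$, and $V := \partial_{2N+1}\romb_{g-1} \setminus S$. Every $p \in V$ satisfies $\|p\| \geq g$, so $\dist(U,V) \geq g$, and TSSM yields $[usv]_X \neq \emptyset$ directly.

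For the backward direction, my plan is to reduce to single sites via Lemma \ref{lemSing}, produce a witness near $0$ using the hypothesis, and then paste with an outside witness. By shift-invariance and Lemma \ref{lemSing}, it suffices to show that for any $q \in \Z^d$ with $\|q\| \geq g$, any $S \Subset \Z^d \setminus \{0,q\}$, and any $u \in \Symb^{\{0\}}, s \in \Symb^S, v \in \Symb^{\{q\}}$ with $[us]_X, [sv]_X \neq \emptyset$, one has $[usv]_X \neq \emptyset$. I will pick $x \in [us]_X$ and $y \in [sv]_X$, set $S_1 := S \cap \romb_{g+N-1}$ and $s_1 := s(S_1)$, and read off $v_1 := y|_{\partial_{2N+1}\romb_{g-1} \setminus S_1}$. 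The points $x$ and $y$ witness $[u\, s_1]_X \neq \emptyset$ and $[s_1\, v_1]_X \neq \emptyset$, so the local hypothesis delivers $z \in [u\, s_1\, v_1]_X$. By construction $z$ agrees with $y$ on the entire annulus $\partial_{2N+1}\romb_{g-1}$: off $S_1$ both equal $v_1$, and on $S_1 \cap \partial_{2N+1}\romb_{g-1}$ both equal $s_1 = y$.

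I will then define $w \in \Symb^{\Z^d}$ by pasting: $w := z$ on $\romb_{g+2N}$ and $w := y$ on its complement. Verifying $w(0) = u$, $w(S) = s$, and $w(q) = v$ is a routine case analysis on whether each site of interest lies in $\romb_{g+N-1}$, in $\partial_{2N+1}\romb_{g-1} \setminus \romb_{g+N-1}$, or strictly outside $\romb_{g+2N}$, with the equality $z = y$ on the annulus making every case consistent.

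The hard part, and the step that explains why the annulus in the statement has width precisely $2N+1$, will be checking that $w \in X$. Windows $p + \romb_N$ sitting entirely inside $\romb_{g+2N}$ inherit admissibility from $z$, and those entirely outside from $y$. For a straddling window, my plan is to show that the portion sitting inside $\romb_{g+2N}$ actually lies in the agreement annulus $\partial_{2N+1}\romb_{g-1}$: having some vertex of $p + \romb_N$ outside $\romb_{g+2N}$ forces $\|p\| \geq g + N + 1$, hence $\|p + v\| \geq \|p\| - N \geq g + 1$ for every $v \in \romb_N$; combined with $\|p+v\| \leq g + 2N$ for the vertices still inside $\romb_{g+2N}$, this places every such inner vertex in $\partial_{2N+1}\romb_{g-1}$, where $z = y$. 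Thus $w$ coincides with $y$ on the whole window and inherits non-forbiddenness from $y \in X$. This $L^1$ geometry check is the only numerical subtlety of the argument, and it is where the width $2N+1$ is essential: any thinner annulus would fail to absorb all straddling $\romb_N$-windows.
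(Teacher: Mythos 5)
Your proof is correct and follows essentially the same route as the paper's: reduce to single sites via Lemma \ref{lemSing}, apply the finite condition to the data read off a point of $[sv]_X$ on $\romb_{g+N-1}$ and on the annulus $\partial_{2N+1}\romb_{g-1}$, and then glue the resulting configuration to that point using their agreement on the annulus together with the fact that $\mathcal{F} \subseteq \Symb^{\romb_N}$. The only (immaterial) differences are that you paste along the outer radius $g+2N$ and spell out the straddling-window geometry explicitly, whereas the paper pastes along radius $g+N-1$ and leaves that check implicit.
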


\begin{proof}
Let's prove that if $X$ satisfies Equation \ref{rombg}, then $X$ satisfies TSSM with gap $g$. W.l.o.g., by Lemma \ref{lemSing} and shift-invariance, consider $p,q \in \Z^d$ with $\dist(p,q) \geq g$, $p = 0$, $S \Subset \Z^d$ and configurations $u \in \Symb^{\{p\}}$, $v \in \Symb^{\{q\}}$ and $s \in \Symb^{S}$ such that $[us]_X,[sv]_X \neq \emptyset$. Take $x \in [sv]$ and consider $u' = u$, $v' = x(\partial_{2N+1}\romb_{g-1} \backslash S)$ and $s' = s(\romb_{g+N-1} \cap S)$. Then, $\emptyset \neq [us]_X \subseteq [u's']_X$ and $\emptyset \neq [x(\romb_g)]_X \subseteq [s'v']_X$, so $[u's'v']_X \neq \emptyset$, by Equation \ref{rombg}. Take $y \in [u's'v']$ and notice that $y(\partial_{2N+1}\romb_{g-1}) = x(\partial_{2N+1}\romb_{g-1})$. Then, since $X$ is an SFT defined by a family of configurations $\mathcal{F} \subseteq \Symb^{\romb_N}$, we conclude that $z = y(\romb_{g+N-1})x(\Z^d \backslash \romb_{g+N-1}) \in [usv]_X$, so $[usv]_X \neq \emptyset$ and $X$ satisfies TSSM. The converse is immediate.
\end{proof}

\begin{prop}
\label{TSSMperiod}
Let $X$ be a non-empty $\Z^d$ shift space that satisfies TSSM with gap $g$. Then, $X$ contains a periodic point of period $2g$ in every coordinate direction.
\end{prop}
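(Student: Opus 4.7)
The goal is to construct $y \in X$ with $\sigma_{2g e_i}(y) = y$ for every $i \in \{1,\dots,d\}$. Such a $y$ is determined by its restriction $w = y|_F$ to the fundamental domain $F = \{0,\dots,2g-1\}^d$ via the periodic extension $\tilde w(p) = w(p \bmod 2g)$ (coordinate-wise). By Corollary \ref{TSSMSFT} together with the subsequent note, $X$ is an SFT with forbidden configurations in $\Symb^{\romb_g}$, so $\tilde w \in X$ iff $\tilde w$ is locally admissible; by periodicity, this reduces to verifying that the $2^d$-fold tiling of $w$ over the super-block $\{0,\dots,4g-1\}^d$ lies in $\Leng(X)$.

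I carry out $d=1$ in detail, where the task becomes: find $w = w_L w_R \in \Symb^{\{0,\dots,2g-1\}}$ such that $ww \in \Leng(X)$. Fix $x \in X$ and set $w_L := x|_{\{0,\dots,g-1\}}$. Step (A): apply TSSM with $U = \{0,\dots,g-1\}$, $V = \{2g,\dots,3g-1\}$, $S = \emptyset$, $u = v = w_L$; since $\dist(U,V) = g+1 \geq g$ and by shift-invariance both $[u]_X$ and $[v]_X$ are non-empty, we obtain $y_1 \in X$ with $y_1|_U = y_1|_V = w_L$. Set $w_R := y_1|_{\{g,\dots,2g-1\}}$ and $w := w_L w_R$, so that $y_1|_{\{0,\dots,3g-1\}} = w_L w_R w_L$.

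Step (B): apply TSSM with the same $U, V$, but now $S = \{g,\dots,2g-1\}$, $u = v = w_R$, $s = w_L$; the preconditions $[us]_X, [sv]_X \neq \emptyset$ are witnessed by $\sigma_g(y_1)$ (which places $w_R w_L$ at $\{0,\dots,2g-1\}$) and $\sigma_{-g}(y_1)$ (which places $w_L w_R$ at $\{g,\dots,3g-1\}$), yielding $z \in X$ with $z|_{\{0,\dots,3g-1\}} = w_R w_L w_R$. Step (C): apply TSSM with $U = \{0,\dots,g-1\}$, $V = \{3g,\dots,4g-1\}$, $S = \{g,\dots,3g-1\}$, $u = w_L$, $v = w_R$, $s = w_R w_L$ (here $\dist(U,V) = 2g+1$); the preconditions are witnessed by $y_1$ and $\sigma_{-g}(z)$. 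The conclusion yields $y' \in X$ with $y'|_{\{0,\dots,4g-1\}} = w_L w_R w_L w_R = ww$, so $ww \in \Leng(X)$ and hence $\tilde w \in X$ by the reduction above.

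For $d \geq 2$ the same scheme applies, replacing the intervals above by their higher-dimensional analogues: starting from a corner pattern $w_0 \in \Symb^{\{0,\dots,g-1\}^d}$, one stitches together $2^d$ copies of $w_0$ at the offset cells $2g\varepsilon + \{0,\dots,g-1\}^d$ (for $\varepsilon \in \{0,1\}^d$) of the super-block via iterated TSSM applications, and then fills in the interior by repeated use of TSSM as in Steps (B) and (C). The main obstacle I anticipate is the combinatorial bookkeeping required to enumerate the $2^d$ quadrants of the super-block and verify the TSSM preconditions at each stage; the key numerical fact making the construction work—shared with the $d=1$ case—is that the separations arising between relevant sub-blocks (notably $g+1$ and $2g+1$ in the analysis above) all exceed the TSSM gap $g$, which is precisely why the period $2g$ is the correct choice.
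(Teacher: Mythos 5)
Your reduction to the super-block and your one-dimensional construction are correct, and they take a genuinely different (purely finite) route from the paper: since TSSM with gap $g$ makes $X$ an SFT with forbidden shapes inside $\romb_g$, every occurrence of a forbidden pattern in the periodic extension $\tilde w$ sits, modulo $2g\Z^d$, inside the block of side $4g$, so it indeed suffices to realize the $2^d$-fold tiling of $w$ in $\Leng(X)$; and your Steps (A)--(C) verify all TSSM preconditions for $d=1$ (the witnesses $\sigma_{\pm g}(y_1)$ and $\sigma_{-g}(z)$ only need to be correct on cells that are already controlled, which is what makes the four-cell argument close up). The paper instead works with infinite configurations: it fixes the pattern of each residue class on \emph{all} $2g\Z^d$-translates, one class at a time, using TSSM with an infinite set $S$ handled by truncation and compactness.

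For $d\geq 2$, however, what you have is a sketch, and as described it has a genuine gap exactly where the multidimensional difficulty lives. After stitching $w_0$ onto the $2^d$ anchor cells $2g\varepsilon+\{0,\dots,g-1\}^d$, $\varepsilon\in\{0,1\}^d$, of a single super-block to get $y_1$, the analogues of Steps (B)--(C) need, as witnesses for the TSSM preconditions, shifted copies of partially built points that carry the already-fixed patterns on \emph{all} anchor cells adjacent to the cell being filled. In $d=1$ the linear order of cells keeps every such requirement inside the controlled window, but in $d\geq 2$ it does not: for instance, in $d=2$, to propagate $w_{(1,0)}:=y_1(g(1,0)+\{0,\dots,g-1\}^2)$ to the cell $g(3,0)+\{0,\dots,g-1\}^2$ the natural witness is $\sigma_{-2ge_1}(y_1)$, but on the anchor cells $\{0,\dots,g-1\}^2$ and $g(0,2)+\{0,\dots,g-1\}^2$ this point shows the values of $y_1$ on cells \emph{outside} the super-block, where nothing has been prescribed, so the precondition $[sv]_X\neq\emptyset$ (with $s$ containing $w_0$ at all anchors) is not verified. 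This is precisely why the paper first builds a point with $u_0$ on \emph{every} $2g\Z^d$-translate of the corner cell (iterated strong irreducibility plus compactness), so that all its shifts by $2g\Z^d$ remain valid witnesses, and then repeats this for each residue class with $S$ an infinite union of already-fixed classes. To complete your finite scheme you would need an analogous device, e.g.\ placing $w_0$ (and each subsequently read-off pattern) on all anchor cells of a much larger buffer box sized to absorb every shift used later; without that, ``the same scheme applies'' does not go through, and the acknowledged bookkeeping is not merely bookkeeping but the missing idea.
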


\begin{proof}
Consider the hypercube $Q = [1,2g]^d$. Notice that $\Z^d = \coprod_{p \in 2g\Z^d} \left(p + Q\right)$. Given ${\ell} \in \{0,1\}^d$, denote $Q({\ell}) = g{\ell} + [1,g]^d \subseteq Q$. Then, $Q = \coprod_{{\ell} \in \{0,1\}^d} Q({\ell})$ and:
\begin{equation}
\Z^d = \coprod_{{\ell} \in \{0,1\}^d} \coprod_{p \in 2g\Z^d} \left(p + Q({\ell})\right) = \coprod_{{\ell} \in \{0,1\}^d} W({\ell}),
\end{equation}
where $W({\ell}) = \coprod_{p \in 2g\Z^d} \left(p + Q({\ell})\right)$. Notice that $\dist(p+Q({\ell}),q+Q({\ell})) \geq g$, for all ${\ell} \in \{0,1\}^d$ and $p,q \in 2g\Z^d$ such that $p \neq q$.

Consider ${0} = {\ell}_0, {\ell}_1, \dots, {\ell}_{2^d-1}$ an arbitrary order in $\{0,1\}^d$. Let $u_0 \in \Leng_{Q({0})}(X)$ and $N \in \N$. By using repeatedly the TSSM property (in particular, strong irreducibility), we can construct a point $x_{0}^{N} \in X$ such that $x_{0}^{N}(2gp + Q({0})) = u_0$, for all $p$ such that $\|p\|_\infty \leq N$. By compactness of $X$, we can take the limit when $N \rightarrow \infty$ and obtain a point $x_0 \in X$ such that $x(2gp + Q({0})) = u_0$, for all $p \in \Z^d$.

Given $0 \leq k < 2^d-1$, suppose that there exists a point $x_k \in X$ and $u_i \in \Leng_{Q({\ell}_i)}(X)$ for $i=0,1,\dots,k$, such that $x_k(2gp + Q({\ell}_i)) = u_i$, for all $i \in \{0,1,\dots,k\}$ and $p \in \Z^d$. Notice that if $k = 2^d-1$, the point $x_{2^d-1}$ is periodic. Then, since we already constructed $x_0$, it suffices to prove that we can construct $x_{k+1}$ from $x_k$. 

Take $u_{k+1} = x_k(Q({\ell}_{k+1}))$. Notice that $u_{k+1} = \sigma_{-2gp}(x_k)(2gp+Q({\ell}_{k+1}))$ and that $\sigma_{-2gp}(x_k)$ has the same property of $x_k$, i.e. $\sigma_{-2gp}(x_k)(2gp+Q({\ell}_i)) = u_i$, for all $i = 0,1,\dots,k$ and $p \in \Z^d$.

Consider an arbitrary enumeration of $\Z^d = \{p_0,p_1,p_2,\dots\}$, with $p_0 = {0}$. Let $x_{k+1}^{0} = x_k$ and suppose that, given $m \in \N$, there is a point $x_{k+1}^{m}$ such that:
\begin{itemize}
\item $x_{k+1}^{m}(2gp_j + Q({\ell}_i)) = u_i$, for all $i \in \{0,1,\dots,k\}$ and $j \in \N$, and
\item $x_{k+1}^{m}(2gp_j + Q({\ell}_{k+1})) = u_{k+1}$, for all $0 \leq j \leq m$.
\end{itemize}

Take the sets $S = \coprod_{r=0}^{k} W({\ell}_r)$, $U = \coprod_{j=0}^{m} 2gp_j+Q({\ell}_{k+1})$ and $V = 2gp_{m+1} + Q({\ell}_{k+1})$. Notice that $\dist(U,V) \geq g$. Then, take $x_{k}(S) \in \Symb^S$, $x_{k+1}^{m}(U) \in \Symb^U$ and $\sigma_{-2gp_{m+1}}(x_k)(V) \in \Symb^V$. We have that $x_{k+1}^{m}(U)x_{k}(S)$ is globally admissible by the hypothesis of the existence of $x_{k+1}^{m}$ and $x_{k}(S)\sigma_{-2gp_{m+1}}(x_k)(V)$ is globally admissible thanks to the observation about $\sigma_{-2gp}(x_k)$. Then, $x_{k+1}^{m}(U)x_{k}(S)\sigma_{-2gp_{m+1}}(x_k)(V)$ is globally admissible, by TSSM. Notice that here $S$ is an infinite set and the TSSM property is for finite sets. This is not a problem since we can consider the finite set $S' = S \cap \block_n$ and take the limit $n \rightarrow \infty$ for obtaining the desired point, by compactness.

\begin{figure}[ht]
\centering
\includegraphics[scale = 0.7]{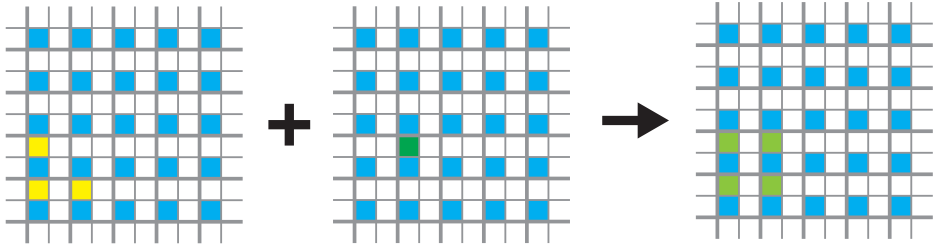}
\caption{Construction of a periodic point using TSSM.}
\label{periodic}
\end{figure}

Now, notice that any extension of $x_{k+1}^{m}(U)x_{k}(S)\sigma_{-2gp_{m+1}}(x_k)(V)$ is a point with the properties of $x_{k+1}^{m+1}$. Taking the limit $m \rightarrow \infty$, we obtain a point with the properties of $x_{k+1}$. Since $k$ was arbitrary, we can iterate the argument until $k = 2^d-1$, for obtaining the point $x_{2^d-1}$ which is periodic of period $2g$ in every canonical direction.
\end{proof}

\begin{note}
It is known that for $d=1,2$ a non-empty strongly irreducible $\Z^d$ SFT contains a periodic point. The case $d=1$ is easy once one knows how to represent an SFT as the space of infinite paths in a directed graph. The case $d=2$ was solved in 2003 by S. Lightwood \cite{1-lightwood}. The case $d \geq 3$ is still an open problem.
\end{note}

\begin{prop}
Let $X$ be a non-empty $\Z^d$ shift space that satisfies TSSM with gap $g$. Then, $X$ contains periodic points of periods $k_1+g,\dots,k_d+g$ in directions $e_1,\dots,e_d$, respectively, for every $k_i \geq g$. Moreover, the set of periodic points is dense in $X$.
\end{prop}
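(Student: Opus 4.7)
The plan is to adapt the construction from the proof of Proposition \ref{TSSMperiod} by replacing the cube of side $2g$ with a rectangular box of sides $k_i+g$. Concretely, I would set $Q = \prod_{i=1}^{d}[1,k_i+g]$ and, for each ${\ell} \in \{0,1\}^d$, define $Q({\ell}) = \prod_{i=1}^{d} I_i(\ell_i)$, where $I_i(0) = [1,k_i]$ and $I_i(1) = [k_i+1,k_i+g]$. Then $Q = \coprod_{{\ell}} Q({\ell})$, and the period lattice $\Lambda = \sum_{i=1}^{d}(k_i+g)\Z e_i$ induces the decomposition $\Z^d = \coprod_{{\ell}} W({\ell})$ with $W({\ell}) = \coprod_{p \in \Lambda}(p+Q({\ell}))$.

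First I would verify the separation property that drives the whole argument: for any ${\ell} \in \{0,1\}^d$ and any distinct $p,q \in \Lambda$, $\dist(p+Q({\ell}), q+Q({\ell})) \geq g$. A quick case analysis on the first coordinate $i$ where $p$ and $q$ differ shows the minimal gap between the corresponding intervals $I_i(\ell_i)$ is either $g+1$ (when $\ell_i=0$, since the next copy of $[1,k_i]$ starts at $k_i+g+1$) or $k_i+1 \geq g+1$ (when $\ell_i=1$); in both cases the hypothesis $k_i \geq g$ is what makes this work. With this separation in hand, the inductive construction of $x_0, x_1, \dots, x_{2^d-1}$ from the proof of Proposition \ref{TSSMperiod} goes through verbatim: fix a seed $u_0 \in \Leng_{Q(0)}(X)$, build $x_0$ by iterated strong irreducibility, and at each step use TSSM to extend $x_k$ so that $Q({\ell}_{k+1})$-cells are also filled consistently. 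The resulting point $x_{2^d-1}$ is periodic of period $k_i+g$ in direction $e_i$ for each $i$.

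For density, given $x \in X$ and $n \in \N$, I would choose $k_i \geq \max(g,2n+1)$ and translate so that $\block_n + (n+1)\mathbf{1} \subseteq Q(0) = \prod_{i=1}^d [1,k_i]$. I then take the seed $u_0 = \sigma_{-(n+1)\mathbf{1}}(x)(Q(0))$, which is globally admissible as a restriction of a point of $X$. Running the construction above with this prescribed seed produces a periodic point $y$ with $y(p+Q(0)) = u_0$ for every $p \in \Lambda$; hence $\sigma_{(n+1)\mathbf{1}}(y) \in X$ is periodic and agrees with $x$ on $\block_n$. Since $n$ is arbitrary, periodic points are dense in $X$.

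The main obstacle is essentially bookkeeping: checking that the rectangular tiles $Q({\ell})$ satisfy the separation bound $\geq g$ under the assumption $k_i \geq g$, and verifying that the induction in Proposition \ref{TSSMperiod} uses only this separation (together with TSSM). Once this is done, the first claim is an immediate analogue, and the density statement follows by the same construction with a chosen seed that agrees with a given point on a prescribed window.
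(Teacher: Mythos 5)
Your proposal is correct and is essentially the paper's own argument: the paper proves this by modifying the proof of Proposition \ref{TSSMperiod}, replacing the hypercube $Q$ by $\prod_{i=1}^{d}[1,k_i+g]$ and $Q(0)$ by $\prod_{i=1}^{d}[1,k_i]$, and obtaining density from the fact that the seed $u_0 \in \Leng_{Q(0)}(X)$ was arbitrary. You simply make explicit the separation check (gap $g+1$ or $k_i+1 \geq g+1$ between translated tiles, using $k_i \geq g$) and the translation bookkeeping for the density statement, both of which the paper leaves implicit.
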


\begin{proof}
Notice that we can modify the proof of Proposition \ref{TSSMperiod}, replacing the hypercube $Q$ by $\prod_{i=1}^{d} [1, k_i + g]$ and the sub-hypercube $Q(0)$, by $\prod_{i=1}^{d} [1, k_i]$. This gives the first part of the statement. Considering this, for checking density of periodic points, notice that in the proof of Proposition \ref{TSSMperiod}, $u_0 \in \Leng_{Q({0})}(X)$ was arbitrary.
\end{proof}

\begin{rem}
In particular, any globally admissible finite configuration of shape $S \subseteq \prod_{i=1}^{d}[1,k_i]$ with $k_i \geq g$, can be embedded in a periodic point of periods $k_1+g,\dots,k_d+g$ in directions $e_1,\dots,e_d$, respectively.
\end{rem}

\begin{lem}
\label{pivot}
Let $X$ be a shift space that satisfies TSSM with gap $g$. Consider $W \Subset \Z^d$ and take $w,w' \in \Leng_{W}(X)$ such that $\Sigma_W(w,w') = \{p_1,\dots,p_k\}$, for some $k \leq |W|$. Then, there exists a sequence $w = w_1, w_2, \dots, w_{k+1} = w' \in \Leng_{W}(X)$ such that $\Sigma_W(w_i,w_{i+1}) \subseteq  \Sigma_W(w,w') \cap \neig_g(p_i)$, for all $1 \leq i \leq k$.
\end{lem}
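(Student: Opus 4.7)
The plan is to induct on $k = |\Sigma_W(w,w')|$. The base case $k=0$ is trivial, since then $w = w'$ and one may take $w_1 = w$. For the inductive step, write $D := \Sigma_W(w,w')$ and aim at the following sub-claim: there exists $\widetilde{w} \in \Leng_W(X)$ with $\widetilde{w}(p_1) = w'(p_1)$ and $\Sigma_W(w,\widetilde{w}) \subseteq D \cap \neig_g(p_1)$. Granting the sub-claim, $\widetilde{w}$ agrees with $w'$ at $p_1$, and with $w = w'$ off $D$, so $\widetilde{w}$ can differ from $w'$ only on $D \setminus \{p_1\}$; hence $|\Sigma_W(\widetilde{w},w')| \leq k-1$, bringing us into the scope of the inductive hypothesis.

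To prove the sub-claim, apply TSSM with gap $g$ to the pairwise disjoint triple
$U = \{p_1\}$, $V = W \setminus \neig_{g-1}(p_1)$, $S = (\neig_{g-1}(p_1) \cap W) \setminus (\{p_1\} \cup D)$,
noting that $\dist(U,V) \geq g$. The key design choice is $S \cap D = \emptyset$, so that $w$ and $w'$ coincide on $S$. Set $u = w'(p_1)$, $v = w(V)$, and $s = w(S) = w'(S)$. Then $[us]_X \neq \emptyset$ because any extension of $w' \in \Leng_W(X)$ to a point of $X$ restricts on $\{p_1\} \cup S$ to $w'(\{p_1\} \cup S) = u \cup s$; symmetrically, $[sv]_X \neq \emptyset$ is witnessed by any extension of $w$, which restricts on $S \cup V$ to $s \cup v$. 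TSSM then supplies $z \in [usv]_X$; put $\widetilde{w} := z(W)$. Because $W \setminus (U \cup S \cup V) = \neig_{g-1}(p_1) \cap D \setminus \{p_1\}$ and $\widetilde{w}$ equals $w$ on $S \cup V$, the set $\Sigma_W(w,\widetilde{w})$ is contained in $\{p_1\} \cup (\neig_{g-1}(p_1) \cap D \setminus \{p_1\}) = \neig_{g-1}(p_1) \cap D \subseteq D \cap \neig_g(p_1)$, proving the sub-claim.

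To finish, enumerate $\Sigma_W(\widetilde{w},w')$ in the order inherited from $\{p_2,\ldots,p_k\}$ and invoke the inductive hypothesis on the pair $(\widetilde{w},w')$, obtaining a sequence $\widetilde{w} = \widetilde{w}_1,\ldots,\widetilde{w}_{k'+1} = w'$ with the required localization. Splice by setting $w_1 = w$, $w_2 = \widetilde{w}$, and, for $i$ running from $2$ to $k$, letting $w_{i+1} = \widetilde{w}_{j+1}$ whenever $p_i$ is the $j$-th element of $\Sigma_W(\widetilde{w},w')$ in the inherited enumeration, and $w_{i+1} = w_i$ otherwise. A routine check then gives $\Sigma_W(w_i,w_{i+1}) \subseteq D \cap \neig_g(p_i)$ at every step and $w_{k+1} = w'$. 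The main obstacle is the sub-claim, and more specifically the selection of a single middle configuration $s$ whose two TSSM compatibility conditions hold simultaneously; the trick is precisely to place $S$ inside $W \setminus D$, where $w$ and $w'$ agree, so that $us$ and $sv$ become honest restrictions of globally admissible extensions of $w'$ and $w$ respectively.
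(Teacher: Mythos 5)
Your proof is correct and is essentially the paper's argument: the key step is the same application of TSSM with $U$ a single disagreement site carrying $w'$'s value, $S$ a set of agreement sites, and $V$ distant sites carrying $w$'s values, so that $us$ and $sv$ are restrictions of extensions of $w'$ and $w$ respectively, with the resulting changes confined to $\neig_g(p_i)\cap\Sigma_W(w,w')$. The only differences are organizational (you induct on $k$ and splice in identity steps, while the paper runs one induction along $p_1,\dots,p_k$ maintaining the invariant $\Sigma_W(w_i,w')\subseteq\{p_i,\dots,p_k\}$, and it puts the whole agreement set into $S$ and only the far disagreement sites into $V$), which does not change the substance.
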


\begin{proof}
Take $w_1 = w$ and $w' = w'$. By induction, suppose that for some $i \leq k$ we have already constructed a sequence $w_1,\dots,w_i \in \Leng_{W}(X)$ such that:
\begin{align}
\Sigma_W(w_j,w_{j+1}) \subseteq \Sigma_W(w,w') \cap \neig_g(p_j),	&	\mbox{ for all } 1 \leq j < i, \mbox{ and }	\\ \Sigma_W(w_j,w')  \subseteq  \{p_j,\dots,p_k\},						&	\mbox{ for all } 1 \leq j \leq i.
\end{align}
 
The base case $i=1$ is clear. Now, let's extend the sequence to $i+1$. Consider the sets $U_i = \{p_i\}$, $V_i = \Sigma_W(w_i,w') \backslash \neig_g(p_i)$ and $S_i = W \backslash \Sigma_W(w_i,w')$. Take the configurations $u_i \in \Symb^{U_i}$, $v_i \in \Symb^{V_i}$ and $s_i \in \Symb^{S_i}$ defined as $u_i := w'(U_i)$, $v_i := w_i(V_i)$ and $s_i := w_i(S_i) = w'(S_i)$. Since $\dist(U_i,V_i) \geq g$, $\emptyset \neq [w']_X \subseteq [u_is_i]_X$, and $\emptyset \neq [w_i]_X \subseteq [s_iv_i]_X$, by TSSM, we can take $x_i \in [u_is_iv_i]_X$ and consider $w_{i+1} := x_i(W) \in \Leng_{W}(X)$. Then, $\Sigma_V(w_{i+1},w') \subseteq  \{p_{i+1},\dots,p_k\}$ and $\Sigma_W(w_i,w_{i+1}) \subseteq \Sigma_W(w,w') \cap \neig_g(p_i)$, as we wanted. Iterating until $i = k$, we conclude.
\end{proof}

\begin{rem}
Lemma \ref{pivot} is a stronger version of the \emph{generalized pivot property} (see \cite{2-chandgotia}).
\end{rem}

\begin{cor}
\label{boundary}
Let $\mu$ be an MRF such that $\supp(\mu)$ satisfies TSSM with gap $g$. Then, $\mu$ satisfies exponential SSM if and only if for every $W \Subset \Z^d$, $\mu$ satisfies the exponential SSM property restricted to boundaries $\delta_1,\delta_2 \in \Symb^{\partial W}$ with $\mu(\delta_1), \mu(\delta_2) > 0$ and such that $\Sigma_{\partial W}(\delta_1,\delta_2) \subseteq \neig_g(p) \cap \partial W$, for some $p \in \partial W$.
\end{cor}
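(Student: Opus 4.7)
The forward direction is immediate, since the restricted family of boundary pairs is a subfamily of the pairs quantified over in the definition of SSM. I focus on the converse and show that the restricted hypothesis, together with TSSM, upgrades to full exponential SSM.

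By Lemma~\ref{ssmSing}, it suffices to establish the single-site version, so fix $W \Subset \Z^d$, $q \in W$, $u \in \Symb^{\{q\}}$, and $\delta_1, \delta_2 \in \Symb^{\partial W}$ with $\mu(\delta_1), \mu(\delta_2) > 0$. Write $\Sigma := \Sigma_{\partial W}(\delta_1, \delta_2) = \{p_1, \dots, p_k\}$ and set $n := \dist(q, \Sigma)$. First I apply Lemma~\ref{pivot} to the shift space $\supp(\mu)$ (which satisfies TSSM with gap $g$ by hypothesis), with $W$ there replaced by $\partial W$ here and pivots $p_1, \dots, p_k$; this yields a sequence $\delta_1 = \eta_1, \eta_2, \dots, \eta_{k+1} = \delta_2 \in \Leng_{\partial W}(\supp(\mu))$ with
$$\Sigma_{\partial W}(\eta_i, \eta_{i+1}) \subseteq \Sigma \cap \neig_g(p_i), \quad i=1,\dots,k.$$
Since $\mu(\supp(\mu)) = 1$, membership of $\eta_i$ in the language of $\supp(\mu)$ is equivalent to $\mu(\eta_i) > 0$, so each $\mu^{\eta_i}$ is well defined. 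By construction, every consecutive pair $(\eta_i, \eta_{i+1})$ has difference set contained in $\neig_g(p_i) \cap \partial W$, hence falls under the restricted exponential SSM assumption, which (say, with rate $f(n) = Ce^{-\alpha n}$) gives
$$|\mu^{\eta_i}(u) - \mu^{\eta_{i+1}}(u)| \leq Ce^{-\alpha \dist(q,\Sigma_{\partial W}(\eta_i,\eta_{i+1}))} \leq Ce^{\alpha g}e^{-\alpha \dist(q, p_i)},$$
using $\dist(q, \Sigma_{\partial W}(\eta_i,\eta_{i+1})) \geq \dist(q, p_i) - g$.

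Summing via the triangle inequality and using that the $p_i$ are distinct points of $\Z^d$ at distance at least $n$ from $q$,
$$|\mu^{\delta_1}(u) - \mu^{\delta_2}(u)| \leq Ce^{\alpha g}\sum_{i=1}^{k} e^{-\alpha \dist(q,p_i)} \leq Ce^{\alpha g}\sum_{\substack{p \in \Z^d \\ \dist(q,p) \geq n}} e^{-\alpha \dist(q,p)}.$$
Since $|\{p \in \Z^d : \dist(q,p) = \ell\}| = O(\ell^{d-1})$, the tail sum is bounded by $C' e^{-\alpha' n}$ for any fixed $0 < \alpha' < \alpha$. This gives the single-site exponential bound, and Lemma~\ref{ssmSing} upgrades it to exponential SSM with rate $\tilde{C}e^{-\alpha' n}$.

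The main technical obstacle is that the pivot decomposition produces $k$ summands with $k$ potentially unbounded, so one must ensure the sum still decays exponentially in $n$; this is handled by the polynomial lattice-shell bound, at the cost of slightly worsening the rate from $\alpha$ to some $\alpha' < \alpha$. The exponential form of the bound is preserved, which is all that is needed for the conclusion.
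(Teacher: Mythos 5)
Your proof is correct, and it follows a genuinely more direct route than the paper's at the key step. Both arguments reduce to the single-site statement via Lemma~\ref{ssmSing} and both rely on the interpolating sequence of Lemma~\ref{pivot}, but they control the (a priori unbounded) number $k$ of interpolation steps differently. The paper first truncates to $W_n = W \cap \neig_{n-1}(q)$ and averages over configurations on $\partial W_n \setminus \partial W$, so that the pivot decomposition is performed on $\partial W_n$, whose size is at most $(2n+1)^d$; each step is then bounded uniformly by $Ce^{-\alpha(n-g)}$ and the polynomial factor $(2n+1)^d$ is absorbed into a slightly smaller exponent $\alpha_\epsilon$. You instead run the pivot decomposition directly on $\partial W$, with pivots $p_1,\dots,p_k \in \Sigma_{\partial W}(\delta_1,\delta_2)$, and compensate for the unboundedness of $k$ by the distance-weighted per-step bound $Ce^{\alpha g}e^{-\alpha \dist(q,p_i)}$ (valid since $\Sigma_{\partial W}(\eta_i,\eta_{i+1}) \subseteq \neig_g(p_i)$, and legitimate to invoke because each consecutive pair lies in the restricted family for the same shape $W$, with positive measure since $\eta_i \in \Leng_{\partial W}(\supp(\mu))$); the distinctness of the $p_i$ and the $O(\ell^{d-1})$ shell bound then give a convergent tail series decaying like $e^{-\alpha' n}$ for any $\alpha' < \alpha$. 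Both approaches pay the same price, an arbitrarily small loss in the exponential rate, but yours avoids the conditioning/averaging identity (the paper's Equation~\ref{Eqa}) and the auxiliary region $W_n$ altogether, at the cost of needing the geometric summation over lattice shells; the paper's version keeps the per-step estimate uniform and localizes all differences to a boundary of controlled size, which is closer in spirit to how such interpolation arguments are usually phrased for boundaries of boxes. One small point worth stating explicitly in your write-up: $\mu(\delta_1),\mu(\delta_2)>0$ implies $\delta_1,\delta_2 \in \Leng_{\partial W}(\supp(\mu))$ (since $\mu(\supp(\mu))=1$), which is what licenses the application of Lemma~\ref{pivot} to them in the first place; you use this implicitly.
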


\begin{proof}
We need to prove that the SSM property holds for boundaries that differ in an arbitrary subset of $\partial W$. By Lemma \ref{ssmSing}, we can restrict our attention to a shape $W \Subset \Z^d$, a site $q \in W$, $u \in \Symb^{\{q\}}$, and boundaries $\delta_1,\delta_2 \in \Symb^{\partial W}$ such that $\mu(\delta_1), \mu(\delta_2) > 0$ and $\dist(q,\Sigma_{\partial W}(\delta_1,\delta_2)) = n$. 

Take an arbitrary $\delta \in \Symb^{\partial W}$ such that $\mu(\delta) > 0$. Define $W_n := W \cap \neig_{n-1}(q)$. By taking averages on $\partial W_n \backslash \partial W$, we have:
\begin{equation}
\label{Eqa}
\mu^{\delta}(u) = \sum_{\gamma: \mu^{\delta}(\gamma) > 0}\mu^{\delta}(u \vert \gamma)\mu^{\delta}(\gamma) = \sum_{\substack{\gamma: \mu^{\delta}(\gamma) > 0 \\ \eta := \gamma\delta(\partial W_n \cap \partial W)}} \mu^{\eta}(u)\mu^{\delta}(\gamma).
\end{equation}
where $\gamma \in \Symb^{\partial W_n \backslash \partial W}$. Now, given arbitrary $\eta,\eta' \in \Symb^{\partial W_n}$ such that $\mu(\eta), \mu(\eta') > 0$, suppose that $\dist(q,\Sigma_{\partial W_n}(\eta,\eta')) = n$ and $\Sigma_{\partial W_n}(\eta,\eta') = \{p_1,\dots,p_k\}$, for some $1 \leq k \leq |\partial W_n|$. Consider the sequence given by Lemma \ref{pivot}, $\eta = \eta_1, \eta_2, \dots, \eta_{k+1} = \eta'$, with $\eta_i \in \Symb^{\partial W_n}$ and $\Sigma_{\partial W_n}(\eta_i,\eta_{i+1}) \subseteq \neig_g(p_i) \cap \partial W_n$, for all $1 \leq i \leq k$. In particular, $\dist(q,\Sigma_{\partial W_n}(\eta_i,\eta_{i+1})) \geq n-g$. Fix $\epsilon \in (0,\alpha)$ and define $\alpha_\epsilon := \alpha-\epsilon$. Then, we can always find a constant $C_\epsilon \geq C$ such that:
\begin{align}
\label{Eqb}
\left| \mu^{\eta}(u) - \mu^{\eta'}(u) \right|	&	\leq \sum_{i=1}^{k} \left| \mu^{\eta_i}(u) - \mu^{\eta_{i+1}}(u) \right|	\\
								&	\leq (2n+1)^dCe^{-\alpha (n-g)} \leq C_\epsilon e^{-{\alpha_\epsilon}n},
\end{align}
because $k \leq |\partial W_n| \leq (2n+1)^d$. Notice that the last inequality holds for sufficiently large $n$ when $C_\epsilon = Ce^g$, but we can always adjust $C_\epsilon$ to obtain the bound for all $n$. Then, combining Equation \ref{Eqa} and Equation \ref{Eqb}, we have that, for $i=1,2$:
\begin{equation}
\left|\mu^{\delta_i}(u) - \mu^{\eta^i}(u)\right| \leq C_\epsilon e^{-{\alpha_\epsilon}n},
\end{equation}
where $\eta^i := \gamma^i\delta_i(\partial W_n \cap \partial W)$, for an arbitrary $\gamma^i \in \Symb^{\partial W_n \backslash \partial W}$ such that $\mu^{\delta_i}(u \vert \gamma^i) > 0$. Therefore, we conclude that:
\begin{align}
\left|\mu^{\delta_1}(u) - \mu^{\delta_2}(u)\right|	&	\leq \left|\mu^{\delta_1}(u) - \mu^{\eta^1}(u)\right| + \left|\mu^{\eta^1}(u) - \mu^{\eta^2}(u)\right|	\\
									&	\phantom{\leq} + \left|\mu^{\eta^2}(u) - \mu^{\delta_2}(u)\right|		\nonumber \\
									&	\leq  C_{\epsilon}e^{-{\alpha_\epsilon}n} + C_{\epsilon}e^{-{\alpha_\epsilon}n} + C_{\epsilon}e^{-{\alpha_\epsilon}n} = 3C_{\epsilon}e^{-{\alpha_\epsilon}n}.
\end{align}
\end{proof}

\begin{rem}
The sequence $\eta_1, \eta_2, \dots, \eta_{k+1}$ from the proof of Corollary \ref{boundary} is called a \emph{sequence of interpolating configurations} \cite[Definition 2.4]{2-martinelli}. In the case without hard constraints (i.e. a full shift), this sequence can always be chosen such that $\Sigma_{\partial W}(\eta_i,\eta_{i+1}) = p_i$, for some $p_i \in \partial W$. In fact, it is common (see \cite{1-martinelli,2-martinelli,1-weitz}) to find as alternative definitions of SSM, boundaries that differ only on a single site. However, when dealing with hard constraints, a definition restricted to boundaries differing on a single site is not necessarily enough for being equivalent to Definition \ref{SSMspec}. Corollary \ref{boundary} gives a similar equivalence, but restricted to boundaries that differ on a neighbourhood of constant size.
\end{rem}

\begin{prop}
\label{ssftssm}
If a n.n. SFT $X$ satisfies SSF, then it satisfies TSSM with gap $g = 2$.
\end{prop}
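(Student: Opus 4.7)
My plan is to reduce TSSM to a purely local check by invoking the alternative characterization of SSF recorded in the Note following Definition~\ref{ssf}: a n.n. SFT $X$ satisfies SSF if and only if there is a family $\mathcal{F}$ of n.n. forbidden configurations with $X = \mathsf{X}(\mathcal{F})$ such that every locally admissible configuration for $\mathcal{F}$ is globally admissible. I fix such an $\mathcal{F}$ and work exclusively with it throughout.

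Given $U, V, S \Subset \Z^d$ with $\dist(U,V) \geq 2$ and configurations $u \in \Symb^U$, $s \in \Symb^S$, $v \in \Symb^V$ satisfying $[us]_X, [sv]_X \neq \emptyset$, I want to conclude that $[usv]_X \neq \emptyset$. By the SSF characterization, it suffices to check that $usv \in \Symb^{U \cup S \cup V}$ is locally admissible for $\mathcal{F}$. So I examine each edge $e = \{p, p + e_i\}$ contained in $U \cup S \cup V$. Because $\dist(U, V) \geq 2$, no such edge has one endpoint in $U$ and the other in $V$; hence every edge lies either in $U \cup S$ or in $S \cup V$.

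Now, $[us]_X \neq \emptyset$ means $us$ is globally admissible, so it extends to a point in $X = \mathsf{X}(\mathcal{F})$, and in particular no n.n. sub-configuration of $us$ is a translate of an element of $\mathcal{F}$; that is, $us$ is locally admissible. The same holds for $sv$. Consequently every edge of $usv$ carries values forbidden by no element of $\mathcal{F}$, so $usv$ itself is locally admissible. Applying the SSF characterization once more, $usv$ is globally admissible, i.e.\ $[usv]_X \neq \emptyset$, which establishes TSSM with gap $g = 2$.

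The argument is essentially a one-line case analysis once the right forbidden list is chosen, so I do not foresee a genuine obstacle. The only delicate point is conceptual rather than technical: the notion of local admissibility depends on $\mathcal{F}$, so one must be careful to work with the specific list supplied by the SSF hypothesis, rather than some arbitrary presentation of $X$.
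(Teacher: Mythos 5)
Your proof is correct and follows essentially the same route as the paper's: both invoke the characterization of SSF (every locally admissible configuration for a suitable n.n.\ forbidden list $\mathcal{F}$ is globally admissible), observe that $us$ and $sv$ are locally admissible and that $\dist(U,V)\geq 2$ rules out any edge joining $U$ to $V$, so $usv$ is locally admissible, and then conclude global admissibility by SSF. Your explicit edge-by-edge check and the remark about working with the specific list $\mathcal{F}$ merely spell out what the paper's terser argument leaves implicit.
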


\begin{proof}
Since $X$ satisfies SSF, every locally admissible configuration is globally admissible. If we take $g = 2$, for all disjoint sets $U,S,V \Subset \Z^d$ such that $\dist(U,V) \geq g$ and for every $u \in \Symb^U$, $s \in \Symb^S$ and $v \in \Symb^U$, if $[us]_X, [sv]_X \neq \emptyset$, in particular we have that $us$ and $sv$ are locally admissible. Since $\dist(U,V) \geq g = 2$, $usv$ must be locally admissible, too. Then, by SSF, $usv$ is globally admissible and, therefore, $[usv]_X \neq \emptyset$.
\end{proof}

It is well known that in the one-dimensional SFT case the mixing hierarchy collapse, i.e. topologically mixing, strongly irreducible and other intermediate properties, such as block gluing and uniform filling, are all equivalent (for example, see \cite{1-boyle}). In the nearest-neighbour case, we extend this to TSSM.

\begin{prop}
\label{1dTSSM}
A $\Z$ n.n. SFT $X$ satisfies TSSM if and only if it is topologically mixing.
\end{prop}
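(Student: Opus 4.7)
\emph{Plan.} The forward direction is immediate from the chain \eqref{mixImpl}: TSSM with $S = \emptyset$ gives strong irreducibility, hence topological mixing. The content is the converse, that a topologically mixing $\Z$ n.n.\ SFT satisfies TSSM.

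\emph{Setup.} Represent $X$ as the vertex shift on the finite directed graph $G$ whose vertex set consists of the symbols of $\Symb$ that appear in some point of $X$, with a directed edge $a \to b$ whenever $ab$ is not forbidden. A configuration $w \in \Symb^W$ on $W = \{r_1 < \cdots < r_m\} \Subset \Z$ is globally admissible if and only if for every consecutive pair $r_i, r_{i+1}$ there is a directed walk of length $r_{i+1} - r_i$ in $G$ from $w(r_i)$ to $w(r_{i+1})$, and $w(r_1)$ (resp.\ $w(r_m)$) admits an infinite backward (resp.\ forward) walk in $G$. Topological mixing of $X$ is equivalent to primitivity of $G$, so there exists $N \in \N$ such that for every $n \geq N$ and every pair of vertices $a,b$ of $G$, at least one directed walk of length exactly $n$ from $a$ to $b$ exists in $G$. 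Fix such $N$ and set $g = N$.

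\emph{Verification.} Take $U, V, S \Subset \Z$ with $\dist(U,V) \geq g$, and $u \in \Symb^U$, $v \in \Symb^V$, $s \in \Symb^S$ with $[us]_X, [sv]_X \neq \emptyset$ (so $u, s, v$ automatically agree on overlaps). Enumerate $W := U \cup S \cup V = \{w_1 < \cdots < w_m\}$, and consider a consecutive pair $(w_i, w_{i+1})$ in $W$. If $\{w_i, w_{i+1}\} \subseteq U \cup S$, then these points are also consecutive in $U \cup S$, and global admissibility of $us$ produces a walk of length $w_{i+1} - w_i$ in $G$ between the prescribed symbols; symmetrically if $\{w_i, w_{i+1}\} \subseteq S \cup V$. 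Otherwise, one of $w_i, w_{i+1}$ lies in $U \setminus (S \cup V)$ and the other in $V \setminus (S \cup U)$, so $w_{i+1} - w_i \geq \dist(U,V) \geq g = N$, and primitivity supplies the required walk. Since $G$ is primitive, the endpoint symbols at $w_1$ and $w_m$ admit infinite backward and forward walks, respectively. Concatenating all these walks yields a point of $X$ with prescribed values $usv$ on $W$, witnessing $[usv]_X \neq \emptyset$.

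\emph{Main obstacle.} The key structural fact -- that topological mixing of a one-dimensional n.n.\ SFT is equivalent to primitivity of its transition graph -- is classical, and already implicit in the edge-shift formalism recalled in Section \ref{sec2}. The only genuine subtlety is that in dimension one the hypothesis $\dist(U,V) \geq g$ does not force $U$ to lie entirely to one side of $V$, and the set $S$ may be interleaved with either; by formulating n.n.\ admissibility directly in terms of consecutive elements of $W$, this case analysis collapses into the two ``interpolation from $us$ or $sv$'' cases plus the ``long-jump'' case where primitivity applies.
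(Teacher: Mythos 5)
Your proof is correct, and it takes a route that differs in implementation from the paper's, though it exploits the same one-dimensional phenomenon. The paper first reduces to singleton configurations $u,v$ via Lemma \ref{lemSing}, invokes the known fact that a topologically mixing $\Z$ n.n.\ SFT is strongly irreducible with gap $g(0,0)$, and then splices actual points of $X$: if $S$ misses the interval between the two sites it fills the gap by strong irreducibility and cuts-and-pastes three points along the two sites, and if $S$ meets the interval it simply glues two points at a coordinate of $S$. You instead bypass Lemma \ref{lemSing} entirely, pass to the transition graph on the surviving symbols, and build the required point by concatenating walks between consecutive elements of $U \cup S \cup V$, with primitivity supplying the one (or more) ``long'' gaps between $U$ and $V$. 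Two small points worth making explicit in your write-up: (i) the only direction of ``mixing $\Leftrightarrow$ primitivity'' you use, namely that mixing yields a uniform $N$ with walks of every length $n \geq N$ between every pair of surviving symbols, follows directly from Definition \ref{topMix} applied to the single-site shapes (the separation constant there depends only on the shapes, and there are finitely many symbol pairs), so no external citation is needed; and (ii) the identification of $X$ with the bi-infinite walks of your graph $G$ uses that every locally allowed transition between surviving symbols is realized in a point (glue a backward ray through $a$ to a forward ray through $b$) together with the fact, stated in Section \ref{sec2}, that locally admissible points are globally admissible. What each approach buys: yours handles arbitrary finite $U,V,S$ in one pass and makes the interpolation mechanism completely explicit at the level of walks; the paper's is shorter given that Lemma \ref{lemSing} and strong irreducibility are already available in the text.
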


\begin{proof}
We prove that if $X$ is topologically mixing, then it satisfies TSSM. The other direction is obvious.

It is known that a topologically mixing $\Z$ n.n. SFT $X$ is strongly irreducible with gap $g = g(0,0)$, where $g(0,0)$ is the gap according to Definition \ref{topMix}. Consider arbitrary $p,q \in \Z$ and $S \Subset \Z$ such that $\dist(p,q) \geq g$. W.l.o.g., by shift-invariance, assume that $p=0 < q$. Take $u \in \Symb^{\{p\}}$, $v \in \Symb^{\{q\}}$ and $s \in \Symb^{S}$ with $[us]_X, [sv]_X \neq \emptyset$. 

First, consider the interval $(p,q)$ and suppose that $S \cap (p,q) = \emptyset$. By strong irreducibility, there is $w \in \Leng_{(p,q)}(X)$ such that $[uwv]_X \neq \emptyset$. Consider $x \in [us]_X$, $y \in [sv]_X$ and $z \in [uwv]_X$. Then, $x((-\infty,p])z((p,q))y([q,\infty)) \in [usv]_X$, so $[usv]_X \neq \emptyset$. Now, suppose that  $S \cap (p,q) \neq \emptyset$. Take $r \in S \cap (p,q)$ and $x \in [us]$, $y \in [sv]$. Then, $x((-\infty,r])y((r,\infty)) \in [usv]_X$, and $[usv]_X \neq \emptyset$. Finally, we conclude by Lemma \ref{lemSing}.
\end{proof}

As it was mentioned before, topologically mixing and strong irreducibility are stable under conjugacy. However, as most properties which are natural for MRFs (e.g. safe symbol, SSF, etc.), TSSM is not a conjugacy invariant. This is illustrated in the next example.

\begin{exmp}
Given $\Symb = \{0,1,2\}$ and the family of forbidden configurations $\mathcal{F} = \left\{00, 102, 201\right\}$, we can consider the one-dimensional SFT $X = \mathsf{X}(\mathcal{F})$ (not nearest-neighbor). Notice that any point of $X$ can be understood as a sequence of $0$s, $1$s and $2$s, such that in between every pair of consecutive $0$s (which are never adjacent), there is a configuration of $1$s and $2$s freely concatenated with only one restriction: If there is a $0$ in between two configurations of $1$s and $2$s, then the last letter of the configuration at the left of the $0$ is the same as the first of the configuration at the right of it.

It can be checked that $X$ is strongly irreducible with gap $g = 3$. Given two arbitrary configurations $u,v \in \Leng(X)$, we can always extend both of them in order to assume that $u$ has shape $(-\infty,0]$ and $v$ has shape $[p,\infty)$, for some $p \in \Z$. Then, there are four main cases:
\begin{itemize}
\item If $u = u'10$ and $v = 01v'$, then $u1v \in X$.
\item If $u = u'10$ and $v = 02v'$, then $u12v \in X$ (this case needs the biggest gap).
\item If $u = u'10$ and $v = 1v'$, then $uv \in X$.
\item If $u = u'20$ and $v = 1v'$, then $u2v \in X$.
\item If $u = u'1$ or $u = u'2$, and $v = 1v'$ or $v = 2v'$, then $uv \in X$.
\end{itemize}

The remaining cases are analogous, so $X$ is strongly irreducible. However, $X$ is not TSSM. In fact, given $g \in \N$, consider $S = \left\{p \in \Z: 0 < p < 2g, p \mbox{ odd}\right\}$ and the configurations $s = 0^S$, $u = 1^{\{0\}}$ and $v = 2^{\{2g\}}$. Then, $[us]_X, [sv]_X \neq \emptyset$, because $us$ can be extended with $1$s in $\Z \backslash (S \cup \{0\})$ and $sv$ can be extended with $2$s in $\Z \backslash (S \cup \{2g\})$. However, $[usv]_X = \emptyset$, since the $1$ in $u$ forces any point in $[us]_X$ to have value $1$ in $(0,2g) \backslash S$ and the $2$ in $v$ forces any point in $[sv]_X$ to have value $2$ in $(0,2g) \backslash S$. Therefore, since $g$ was arbitrary, $X$ is not TSSM for any gap $g$.

Now, if we define $Y := \beta_1(X)$, where $\beta_1$ is the higher block code with $N=1$ (see Example \ref{blockcode}), then $Y$ is a $\Z$ n.n. SFT conjugate to $X$ ($X \cong Y$), and therefore strongly irreducible (which is a conjugacy invariant). Then, by Proposition \ref{1dTSSM}, we have that $Y$ is TSSM, while $X$ is not.

This example can be extended to any dimension $d$ by considering the constraints $\mathcal{F}$ in only one canonical direction. In other words, TSSM is not a conjugacy invariant for any $d$.
\end{exmp}

\subsection{TSSM and uniform bounds of conditional probabilities}
\label{subsect3}

Now we show how TSSM is closely related with bounds on conditional probabilities of measures satisfying SSM. First, some definitions.

\begin{defn}
\label{Pmu}
Let $\mu$ be a shift-invariant measure on $\Z^d$ and denote $X = \supp(\mu)$. For a set $S \Subset \Z^d \backslash \{0\}$, define $p_{\mu,S}:X \to [0,1]$ to be:
\begin{equation}
p_{\mu,S}(x) := \mu\left(x(0) \middle\vert x(S)\right).
\end{equation}

Notice that $p_{\mu,S}(x)$ is a value that depends only on $x(S \cup \{0\})$. Given this, we define:
\begin{equation}
\Cmu := \inf\left\{p_{\mu,S}(x): x \in \supp(\mu), S \Subset \Z^d \backslash \{0\}\right\}.
\end{equation}
\end{defn}

This and similar uniform bounds were introduced in \cite{1-marcus} for obtaining convergence results and control over certain functions related with topological pressure representation (see Section \ref{PressureRep}). In the same work, it is proven that $\Cmu > 0$ for any n.n. Gibbs measures $\mu$ whose support satisfies SSF. Here we extend this result to MRFs whose support satisfies TSSM. Before that, for an MRF $\mu$ and $T \Subset \Z^d$, we define $D_\mu(T)$ to be:
\begin{equation}
D_\mu(T) := \min_{W \subseteq T} \min_{\substack{\delta \in \Symb^{\partial W} \\ \mu(\delta) > 0}} \min_{\substack{w \in \Symb^{W} \\ \mu(w \vert \delta) > 0}} \mu(w \vert \delta).
\end{equation}

Notice that $D_\mu(T) > 0$, for all $T \Subset \Z^d$.

\begin{prop}
\label{TSSM-Cmu}
Let $\mu$ be an MRF whose support $\supp(\mu)$ satisfies TSSM. Then, $\Cmu > 0$.
\end{prop}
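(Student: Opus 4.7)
The plan is to use the MRF property of $\mu$ to reduce $\mu(x(0) \mid x(S))$ to a conditional probability over configurations supported in a fixed finite neighbourhood of $0$, then invoke TSSM to guarantee strict positivity, and finally conclude by finiteness. Let $g$ be a TSSM gap for $X := \supp(\mu)$ and set $R := \romb_{g-1}$, so that $0 \in R$ and $\dist(\{0\},\partial R) = g$. Given $x \in X$ and $S \Subset \Z^d \setminus \{0\}$, partition $S = S_R \sqcup S_\partial \sqcup S_c$ with $S_R := S \cap R$, $S_\partial := S \cap \partial R$, and $S_c := S \setminus (R \cup \partial R)$.

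Averaging over $\partial R \setminus S_\partial$, for each $\gamma \in \Symb^{\partial R \setminus S_\partial}$ with $\mu(\gamma \mid x(S)) > 0$ write $\delta_\gamma := \gamma \cup x(S_\partial) \in \Symb^{\partial R}$, so that
\[
p_{\mu,S}(x) = \sum_\gamma \mu(x(0) \mid x(S),\gamma)\, \mu(\gamma \mid x(S)).
\]
Since $\partial R$ is the boundary of $R$ and $S_c$ lies entirely outside $R \cup \partial R$, the MRF property applied to the region $R$ turns $\mu(\cdot \mid x(S_c), \delta_\gamma)$ restricted to $R$ into $\mu(\cdot \mid \delta_\gamma)$; taking the ratio of the marginals on $\{0\} \cup S_R$ and on $S_R$ gives
\[
\mu(x(0) \mid x(S),\gamma) \;=\; \mu(x(0) \mid x(S_R), \delta_\gamma).
\]

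To see this inner factor is strictly positive, I would apply TSSM with $U := \{0\}$, $V := \partial R \setminus S_\partial$, $S' := S_R \cup S_\partial$, and configurations $u := x(0)$, $v := \gamma$, $s' := x(S')$. The hypothesis $\dist(U,V) \geq g$ holds by construction of $R$; $us' = x(\{0\} \cup S')$ is globally admissible because $x \in X$; and $s'v = x(S') \cup \gamma$ is globally admissible because $\gamma \cup x(S)$ extends to a point of $X$. TSSM with gap $g$ then produces $us'v = x(0) \cup x(S_R) \cup \delta_\gamma \in \Leng(X)$, giving $\mu(x(0) \mid x(S_R),\delta_\gamma) > 0$.

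Finally, all three pieces of the triple $(x(0), x(S_R), \delta_\gamma)$ live inside the fixed finite set $R \cup \partial R = \romb_g$, so the collection of such triples that can arise as $x$ ranges over $X$ and $S$ over finite subsets of $\Z^d \setminus \{0\}$ is finite. Setting
\[
c := \min\bigl\{\mu(a \mid w,\delta) : W \subseteq R \setminus \{0\},\, a \in \Symb^{\{0\}},\, w \in \Symb^W,\, \delta \in \Symb^{\partial R},\, \mu(a,w,\delta) > 0\bigr\},
\]
we get $c > 0$ as a finite minimum of strictly positive numbers, and conclude $\Cmu \geq c > 0$. The main obstacle is the TSSM step: the inclusion of $S_R$ inside $R$ forces the interpolating configuration $s'$ to be taken equal to the restriction of $x$ to $S_R \cup S_\partial$, since only this single choice simultaneously guarantees that $us'$ is the globally admissible restriction of $x$ and that $s'v$ is consistent with the point witnessing $\mu(\gamma \mid x(S)) > 0$. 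The gap constant $g$ being built into the radius of $R$ is what makes $\dist(U,V) \geq g$ automatic; the rest is bookkeeping.
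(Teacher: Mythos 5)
Your argument is correct, and it follows essentially the same strategy as the paper's proof: localize the conditioning to a window of radius $g$ around the origin via the MRF property, use TSSM to glue $x(0)$ onto the retained part of $x(S)$ together with a boundary configuration at distance at least $g$ from the origin, and finish with a minimum over the finitely many positive conditional probabilities living on that fixed window. The differences are in the bookkeeping: the paper works with $K_g = K \cap \block_{g-1}$, where $K$ is the connected component of $\Z^d \setminus S$ containing $0$, so that $\partial K_g$ is covered by $S$ together with a set $V$ far from the origin, and it extracts a \emph{single} completion $v \in \Symb^V$ with $\mu(v \mid x(S)) \geq |\Symb|^{-|V|}$ by pigeonhole, which yields the explicit uniform bound $D_\mu(\block_g)|\Symb|^{-2d(2g+1)^{d-1}}$; you instead keep $S \cap \romb_{g-1}$ in the conditioning, average over \emph{all} completions $\gamma$ of $\partial\romb_{g-1} \setminus S$, and bound each term below by your finite minimum $c$. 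Your version dispenses with the connected-component construction and the counting step, at the cost of producing a uniform but unspecified constant rather than an explicit one; both routes are valid, and your TSSM application (disjointness of $\{0\}$, $S_R \cup S_\partial$, $\partial\romb_{g-1}\setminus S_\partial$, the distance bound, and the global admissibility of the two partial configurations) checks out.
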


\begin{proof}
Let's denote $X = \supp(\mu)$, and consider $x \in X$ and $S \Subset \Z^d \backslash \{0\}$. Let $K$ be the connected component of $\Z^d \backslash S$ containing $0$ and let $g$ be the gap given by the TSSM property. Define $K_g := K \cap \block_{g-1}$ and $V := \partial K_g \backslash S$. Notice that $V \subseteq K \cap \partial \block_{g-1}$, and $|\partial \block_{g-1}| = 2d(2g+1)^{d-1}$.

First, assume that $V =\emptyset$. If this is the case, then $\partial K_g \subseteq S$. Therefore, by the MRF property:
\begin{equation}
p_{\mu,S}(x) = \mu(x(0) \vert x(S)) =  \mu(x(0) \vert x(\partial K_g))  \geq \mu(x(K_g) \vert x(\partial K_g)) \geq D_\mu(\block_g).
\end{equation}

On the other hand, suppose that $V \neq \emptyset$. By a counting argument, there must exist $v \in \Symb^{V}$ such that:
\begin{equation}
\mu\left(v \middle\vert x(S)\right) \geq |\Symb|^{-|V|} \geq |\Symb|^{-|\partial \block_{g-1}|} = |\Symb|^{-2d(2g+1)^{d-1}}.
\end{equation}

In particular, $vx(S) \in \Leng(X)$. Since $x(S)x(0) \in \Leng(X)$ and $\dist(0,V) \geq g$, by TSSM, we conclude that $vx(S)x(0) \in \Leng(X)$. Now, take $y \in [vx(S)x(0)]_X$. Then, by the MRF property, it follows that:
\begin{align}
p_{\mu,S}(x)	&	=	\mu\left(y(0)	\middle\vert y(S)\right)	\\
			&	\geq	\mu\left(y(K_g) 	\middle\vert y(S)\right)	\\
			&	\geq	\mu\left(y(K_g)	\middle\vert y(S)y(V)\right)\mu\left(y(V) \middle\vert y(S)\right)	\\
			&	=	\mu\left(y(K_g)	\middle\vert y(\partial K_g \cap S)y(V)\right)\mu\left(v \middle\vert x(S)\right)	\\
			&	=	\mu\left(y(K_g)	\middle\vert y(\partial K_g)\right)\mu\left(v \middle\vert x(S)\right)	\\
			&	\geq 	D_\mu(\block_g)|\Symb|^{-2d(2g+1)^{d-1}}.
\end{align}

Therefore, in both cases we have that:
\begin{equation}
p_{\mu,S}(x) \geq D_\mu(\block_g)|\Symb|^{-2d(2g+1)^{d-1}}.
\end{equation}

Since this lower bound is positive and independent of $x$ and $S$, taking the infimum over $S$, we conclude that $\Cmu > 0$.
\end{proof}

An interesting fact is that the converse also holds, at least when $\mu$ satisfies SSM.

\begin{prop}
\label{pSSM-pCmu}
Let $\mu$ be an MRF that satisfies SSM such that $\Cmu > 0$. Then, $\supp(\mu)$ satisfies TSSM.
\end{prop}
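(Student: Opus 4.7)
The plan is to choose $g$ so that $f(g) < \Cmu$ (possible since $f(n) \to 0$ and $\Cmu > 0$) and then verify TSSM with gap $g$. By Lemma \ref{lemSing}, it suffices to handle singletons: given $p, q \in \Z^d$ with $\dist(p, q) \geq g$, $S \Subset \Z^d$, and $u \in \Symb^{\{p\}}$, $s \in \Symb^S$, $v \in \Symb^{\{q\}}$ with $[us]_X, [sv]_X \neq \emptyset$ (where $X = \supp(\mu)$), we want to show $[usv]_X \neq \emptyset$, equivalently $\mu(usv) > 0$.

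The key idea is to choose a region whose outer boundary lies entirely inside $S \cup \{q\}$, so that conditioning on the boundary essentially conditions on $s$ and $v$. Let $W$ be the connected component of $\Z^d \setminus (S \cup \{q\})$ containing $p$; a short connectivity argument shows $\partial W \subseteq S \cup \{q\}$, and in particular $\partial W$ is finite. Picking $y \in [sv]_X$ and $x \in [us]_X$, we set $\delta^* = y(\partial W)$ and $\delta_x = x(\partial W)$ in $\Symb^{\partial W}$. Since $x(S) = s = y(S)$, these two configurations agree on $S \cap \partial W$, so $\Sigma_{\partial W}(\delta^*, \delta_x) \subseteq \{q\}$; both have positive $\mu$-measure as restrictions of points in $\supp(\mu)$. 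Applying SSM with $U = \{p\}$ gives $|\mu^{\delta^*}(u) - \mu^{\delta_x}(u)| \leq f(\dist(p, q)) \leq f(g) < \Cmu$, while the definition of $\Cmu$ applied to $x$ (with $x(p) = u$ and $x(\partial W) = \delta_x$, after shifting $p$ to the origin) gives $\mu^{\delta_x}(u) \geq \Cmu$. Combining, $\mu^{\delta^*}(u) \geq \Cmu - f(g) > 0$.

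To upgrade this to a point of $[usv]_X$, set $T = (S \cup \{q\}) \setminus \partial W \subseteq \Z^d \setminus \overline W$, noting that $y(T)$ matches $s$ on $S \setminus \partial W$ and $v$ on $\{q\} \setminus \partial W$. The MRF property yields conditional independence of the $W$-interior and the exterior of $\overline W$ given $\partial W$; since $\mu(u \mid \delta^*) > 0$ (from the previous paragraph) and $\mu(y(T) \mid \delta^*) > 0$ (realized by $y$ itself), we obtain $\mu(u, y(T), \delta^*) > 0$. The resulting point $z'$ satisfies $z'(p) = u$, $z'(q) = v$, and $z'(S) = s$ (with the $\partial W$ portion coming from $\delta^*$ and the rest from $y(T)$), so $z' \in [usv]_X$. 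The main technical obstacle is when $W$ is infinite (which occurs if $p$ lies in the unbounded component of $\Z^d \setminus (S \cup \{q\})$), since SSM and the MRF conditional independence are stated for finite regions; this can be handled via the finite exhaustion $W_m = W \cap \block_m$, noting that the extra boundary sites in $\partial W_m \setminus \partial W$ lie at distance $\geq m+1$ from $p$, so taking $m \geq g-1$ preserves the bound $f(g)$ and does not affect the rest of the argument.
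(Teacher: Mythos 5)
Your argument is correct and is essentially the paper's proof run forwards instead of by contradiction: both reduce to singletons via Lemma \ref{lemSing}, compare two boundary conditions on the boundary of the connected component of $\Z^d \setminus (S \cup \{q\})$ containing $p$ that differ only at $q$ (or far away, after truncation), and play the SSM estimate against the uniform lower bound $\Cmu$ — your explicit MRF pasting step at the end replacing the paper's derivation of a contradiction from $\mu\left(v \middle\vert su\right) = 0$. The only cosmetic fix: since $f$ is not assumed monotone, choose $g$ so that $f(n) < \Cmu$ for \emph{all} $n \geq g$ (as the paper does with $n_0$) rather than just $f(g) < \Cmu$; with that, your finite exhaustion $W_m$ handles the infinite-component case exactly as you indicate.
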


\begin{proof}
Let's denote $X = \supp(\mu)$ and assume that $\mu$ satisfies SSM with rate $f(n)$, for some $f(n)$ such that $\lim_{n \rightarrow \infty} f(n) = 0$. Take $n_0 \in \N$ such that $f(n) <  \Cmu$, for all $n \geq n_0$. 

Consider $p,q \in \Z^d$ and $S \subseteq \Z^d$ with $\dist(p,q) = n \geq n_0$, and configurations $u \in \Symb^{\{q\}}$, $v \in \Symb^{\{p\}}$, $s \in \Symb^S$, as in Lemma \ref{lemSing}. W.l.o.g., by shift-invariance, we can assume $p = 0$.

By contradiction, suppose that $[us]_X, [sv]_X \neq \emptyset$, but $[usv]_X = \emptyset$. Then, we have that $\mu\left(v \middle\vert su\right) = 0$. However, since $\mu\left(v \middle\vert s\right) > 0$, by taking an average over configurations on $\{q\}$, there must exist $\tilde{u} \in \Symb^{\{q\}}$ such that $\mu\left(v \middle\vert s\tilde{u}\right) \geq \mu\left(v \middle\vert s\right) = p_{\mu,S}(x) \geq \Cmu$, where $x$ is any element from $[sv]_X$.

Notice that there must exist a path $\camino$ from $0$ to $q$ contained in $\Z^d \backslash S$. If not, by the MRF property, $0 = \mu\left(v \middle\vert su\right) = \mu\left(v \middle\vert s\right) > 0$, which is a contradiction.

Now, take $N$ sufficiently large so $\left(S \cup \camino\right) \subseteq \block_{N}$. Given the set $\block_N \backslash \left(S \cup \{q\}\right)$, consider the connected component $K$ that contains $\{0\}$. Notice that $q$ must belong to $\partial K$. Next, by taking averages over configurations in $\partial K$ and due to the MRF property, there must exist $\delta_1,\delta_2 \in \partial K$ such that:
\begin{align}
 0 = \mu\left(v \middle\vert su\right) \geq \mu^{\delta_1}\left(v\right),	&	\mbox{ and } \mu\left(v \middle\vert s\tilde{u}\right) \leq \mu^{\delta_2}\left(v\right).
 \end{align}
 
Then, since $\Cmu \leq \mu\left(v \middle\vert s\tilde{u}\right)$ and $f(\dist(0,q)) \leq f(n_0)$:
\begin{align}
\Cmu \leq	\left|\mu\left(v \middle\vert su\right) - \mu\left(v \middle\vert s\tilde{u}\right)\right|	 \leq	\left|\mu^{\delta_1}\left(v\right) - \mu^{\delta_2}\left(v\right)\right| \leq f(n_0) < \Cmu,
\end{align}
which is a contradiction. Therefore, $[usv]_X \neq \emptyset$ and, by Lemma \ref{lemSing}, we have that $\supp(\mu)$ satisfies TSSM with gap $g = n_0$.
\end{proof}

\begin{cor}
\label{SSMcoroll}
Let $\mu$ be an MRF that satisfies SSM. Then, $\Cmu > 0$ if and only if $\supp(\mu)$ satisfies TSSM.
\end{cor}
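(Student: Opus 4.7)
The statement is essentially an immediate assembly of the two preceding propositions, so the plan is to invoke each one in the appropriate direction rather than to prove anything new.

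For the ($\Leftarrow$) direction, assume $\supp(\mu)$ satisfies TSSM. Observe that Proposition \ref{TSSM-Cmu} requires no mixing hypothesis on $\mu$ beyond the MRF property; it yields $\Cmu > 0$ directly from the TSSM gap $g$, a counting argument on a boundary piece $V \subseteq \partial \block_{g-1}$, and the positive constant $D_\mu(\block_g)$. So the SSM assumption on $\mu$ is not even used here, and the conclusion $\Cmu > 0$ follows immediately.

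For the ($\Rightarrow$) direction, assume $\Cmu > 0$. This is exactly the hypothesis of Proposition \ref{pSSM-pCmu} together with the standing assumption that $\mu$ satisfies SSM. Applying that proposition yields that $\supp(\mu)$ satisfies TSSM (with gap $g = n_0$, where $n_0$ is chosen so that the SSM rate $f(n)$ drops below $\Cmu$ for $n \geq n_0$).

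There is no real obstacle here: the corollary is merely the conjunction of Proposition \ref{TSSM-Cmu} and Proposition \ref{pSSM-pCmu}, stated under the common hypothesis that $\mu$ satisfies SSM. The only minor observation worth recording is that one half of the equivalence (TSSM $\Rightarrow \Cmu > 0$) holds for arbitrary MRFs, while the SSM hypothesis is genuinely needed only for the converse implication, where it is used to compare $\mu^{\delta_1}(v)$ and $\mu^{\delta_2}(v)$ on a suitable boundary of a shape containing a path from $0$ to $q$.
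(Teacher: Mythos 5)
Your proposal is correct and matches the paper's intent exactly: the corollary is stated without a separate proof precisely because it is the immediate conjunction of Proposition \ref{TSSM-Cmu} (TSSM $\Rightarrow \Cmu > 0$, no SSM needed) and Proposition \ref{pSSM-pCmu} (SSM and $\Cmu > 0$ $\Rightarrow$ TSSM). Your added remark about where the SSM hypothesis is genuinely used is accurate and consistent with the paper's arguments.
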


\section{Connections between mixing properties}
\label{sec6}

In this section we establish some connections between boundary and combinatorial/topological mixing properties. In particular, we show how TSSM is a property that arises naturally when we have an MRF satisfying SSM, at least when the decay rate is high enough.

\begin{prop}
\label{WSM-mix}
If an MRF $\mu$ satisfies WSM, then $\mu$ is measure-theoretic strong mixing. In particular, $\mu$ is ergodic and $\supp(\mu)$ is topologically mixing.
\end{prop}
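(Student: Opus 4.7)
The plan is to insert a finite ``shield'' around $U$ that separates $U$ from a far-away translate $p+V$, and then leverage the MRF property together with the WSM estimate on that shield.

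Concretely, given nonempty disjoint $U, V \Subset \Z^d$ and configurations $u \in \Symb^U$, $v \in \Symb^V$, I would reduce the problem to showing
\[
\bigl|\mu([u] \cap \sigma_{-p}[v]) - \mu(u)\mu(v)\bigr| \longrightarrow 0 \quad \text{as } \|p\| \to \infty.
\]
For $\|p\|$ large, set $n = \lfloor \dist(U,p+V)/2 \rfloor$ and $W = \neig_n(U)$, so that $U \subseteq W$, $\dist(U,\partial W) = n+1$, and $(W \cup \partial W) \cap (p+V) = \emptyset$. Conditioning on $\partial W$ and using the MRF property (since $p+V$ lies outside $W \cup \partial W$) together with shift-invariance ($\mu(\sigma_{-p}v) = \mu(v)$),
\[
\mu([u] \cap \sigma_{-p}[v]) = \sum_{\delta:\,\mu(\delta)>0} \mu^\delta(u)\,\mu([\sigma_{-p}v] \cap [\delta]), \qquad \mu(u)\mu(v) = \sum_{\delta:\,\mu(\delta)>0} \mu^\delta(u)\,\mu(\delta)\,\mu(v).
\]
Picking any reference $\delta_0 \in \Symb^{\partial W}$ with $\mu(\delta_0) > 0$, the $\mu^{\delta_0}(u)$ contributions cancel in both sums, so the difference telescopes to
\[
\sum_{\delta} \bigl(\mu^\delta(u) - \mu^{\delta_0}(u)\bigr)\bigl(\mu([\sigma_{-p}v] \cap [\delta]) - \mu(\delta)\,\mu(v)\bigr),
\]
which by WSM and the elementary bound $\sum_\delta \bigl|\mu([\sigma_{-p}v] \cap [\delta]) - \mu(\delta)\mu(v)\bigr| \leq 2\mu(v)$ is at most $2|U|\,f(n+1)$. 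Since $n \to \infty$ as $\|p\| \to \infty$, this yields measure-theoretic strong mixing.

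The two ``in particular'' clauses then follow from standard arguments. For ergodicity: given a shift-invariant measurable $A$, approximate $\mathbf{1}_A$ in $L^1(\mu)$ by finite unions of cylinders and apply the above to obtain $\mu(A) = \mu(A \cap \sigma_{-p}A) \to \mu(A)^2$, forcing $\mu(A) \in \{0,1\}$. For topological mixing of $\supp(\mu)$: observe that $[u]_{\supp(\mu)}, [v]_{\supp(\mu)} \neq \emptyset$ is equivalent to $\mu(u), \mu(v) > 0$; strong mixing then gives $\mu([u] \cap \sigma_{-p}[v]) \to \mu(u)\mu(v) > 0$, so this quantity is positive for all sufficiently large $\|p\|$, whence $[u]_{\supp(\mu)} \cap \sigma_{-p}([v]_{\supp(\mu)}) \neq \emptyset$.

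The only delicate step is the telescoping identity: one must ensure that a reference boundary $\delta_0$ with $\mu(\delta_0) > 0$ exists (automatic, since $\sum_\delta \mu(\delta) = 1$) and that the cancellations align so WSM applies directly to the pair $(\delta,\delta_0)$. Everything else is routine bookkeeping and a single passage to the limit $\|p\| \to \infty$.
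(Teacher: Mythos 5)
Your proof is correct and takes essentially the same route as the paper: you surround $U$ by the boundary of a neighbourhood of radius roughly half the separation, use the MRF property to decouple $u$ from the distant translate of $v$, and apply WSM on that boundary, with your centering against a reference $\delta_0$ playing the same role as the paper's sandwich between extremal boundary configurations and comparison with the average $\mu(u) = \sum_{\delta}\mu^{\delta}(u)\mu(\delta)$. The ``in particular'' clauses are disposed of by the same standard arguments the paper leaves implicit.
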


\begin{proof}
Consider $U,V \Subset \Z^d$, $u \in \Symb^U$, $v \in \Symb^V$ and, w.l.o.g., $\mu(u),\mu(v)  > 0$. Given $\epsilon > 0$ and the rate $f(n)$ of WSM, take $n_0 \in \N$ such that $f(n) \leq \frac{\epsilon}{|U|}$, for all $n \geq n_0$. Given $p \in \Z^d$ such that $\dist(U,p+V) \geq 2n_0$, denote $v'$ the translated version of $v$, from $V$ to $p+V$. Then, 
\begin{equation}
\mu(uv') =	\mu\left(v'\right)\mu\left(u \middle\vert v'\right) =	\mu\left(v'\right) \sum_{\delta} \mu\left(u \delta\middle\vert v'\right),
\end{equation}
where the sum ranges over all boundary configurations $\delta \in \Symb^{\partial \neig_{n_0}(U)}$ such that $\mu(\delta | v') > 0$. By shift-invariance, $\mu(v') = \mu(v)$, so (by the MRF property):
\begin{equation}
\mu(uv') =	\mu\left(v\right) \sum_{\delta}\mu\left(u \middle\vert \delta v'\right)\mu\left(\delta\middle\vert v'\right) =	\mu\left(v\right) \sum_{\delta}\mu\left(u \middle\vert \delta\right)\mu\left(\delta\middle\vert v'\right).
\end{equation}

Now, since $\sum_{\delta}\mu\left(\delta\middle\vert v'\right) = 1$, we have that:
\begin{equation}
\mu\left(v\right)\mu\left(u \middle\vert \underline{\delta}\right)	\leq	\mu(uv') \leq	\mu\left(v\right)\mu\left(u \middle\vert \overline{\delta}\right),
\end{equation}
where $\underline{\delta}, \overline{\delta} \in \Symb^{\partial \neig_{n_0}(U)}$ are boundary configurations such that $\mu\left(u \middle\vert \underline{\delta}\right) \leq \mu\left(u \middle\vert \delta\right) \leq \mu\left(u \middle\vert \overline{\delta}\right)$, for every $\delta$. By WSM, and since $\mu(u) = \sum_{\delta} \mu\left(u \delta\middle\vert \delta\right)\mu\left(\delta\right)$, we have that $\left|\mu\left(u\right) - \mu\left(u \middle\vert \delta\right)\right| \leq |U|f(n)$, for every $\delta$. Therefore,
\begin{equation}
\mu\left(v\right)\mu\left(u\right) - |U|f(n) \leq	\mu(uv') \leq	\mu\left(v\right)\mu\left(u\right) + |U|f(n).
\end{equation}

Then, since $f(n) \leq \frac{\epsilon}{|U|}$, we have that $\left| \mu(uv') - \mu\left(v\right)\mu\left(u\right) \right| \leq |U|f(n) \leq \epsilon$, we conclude.
\end{proof}

In contrast with the preceding result involving WSM, we have the following one with the SSM assumption.

\begin{thm}
\label{rate}
Let $\mu$ be a $\Z^2$ MRF that satisfies exponential SSM with rate $f(n) = Ce^{-\alpha{n}}$, where $\alpha > 4\log|\Symb|$. Then, $\supp(\mu)$ satisfies TSSM.
\end{thm}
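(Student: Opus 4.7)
My plan is by contradiction, routed through the quantitative characterization of Section~\ref{subsect3}. Since $\mu$ is assumed to satisfy SSM, Corollary~\ref{SSMcoroll} reduces proving that $\supp(\mu)$ satisfies TSSM to proving that $\Cmu > 0$, i.e.\ that $\mu(x(0) \mid x(S))$ admits a positive lower bound uniform in $x \in \supp(\mu)$ and $S \Subset \Z^2 \setminus \{0\}$.

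The technical backbone is a one-scale \emph{peeling lemma}: for every $k \ge 1$, every $x \in \supp(\mu)$, every $S \Subset \Z^2 \setminus \{0\}$, and $a = x(0)$,
\[
\mu(a \mid x(S)) \ge \mu(a \mid x(S \cap \romb_{k-1})) - C e^{-\alpha k}.
\]
To prove this, apply the MRF property to the connected component of $\romb_{k-1} \setminus S$ containing $0$ and expand
\[
\mu(a \mid x(S)) = \sum_{\tau \in \Symb^{\partial \romb_{k-1} \setminus S}} \mu\!\left(a \,\middle|\, x(S \cap \romb_{k-1}), \gamma(\tau)\right) \mu(\tau \mid x(S)),
\]
where $\gamma(\tau) \in \Symb^{\partial \romb_{k-1}}$ is the boundary configuration built from $\tau$ and from the restriction of $x(S)$ to $\partial \romb_{k-1}$. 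The crucial geometric input is that $\partial \romb_{k-1} = \{p : \|p\|_1 = k\}$ lies entirely at $\ell^1$-distance exactly $k$ from $0$, so Lemma~\ref{ssmSing} and exponential SSM applied with $U = \{0\}$ yield
\[
\left| \mu(a \mid x(S \cap \romb_{k-1}), \zeta_1) - \mu(a \mid x(S \cap \romb_{k-1}), \zeta_2) \right| \le C e^{-\alpha k}
\]
for all positive-measure $\zeta_1, \zeta_2 \in \Symb^{\partial \romb_{k-1}}$, and the peeling bound follows by comparing the maximum of these quantities to their weighted average $\mu(a \mid x(S \cap \romb_{k-1}))$.

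Iterating the peeling lemma from $k_\max$ (chosen so $S \subseteq \romb_{k_\max}$) down to $k=1$ produces, via telescoping,
\[
\mu(a \mid x(S)) \ge \mu(a) - \sum_{k=1}^\infty C e^{-\alpha k},
\]
with base case $\mu(a \mid x(S \cap \romb_0)) = \mu(a) \ge c_0 := \min\{\mu(a) : a \in \Symb, \mu(a) > 0\} > 0$. The hypothesis $\alpha > 4 \log |\Symb|$ is invoked at this point. The constant $4$ reflects the $\Z^2$-identity $|\partial \romb_{k-1}| = 4k$: a sharpened peeling lemma, obtained by rewriting $\mu(\tau \mid x(S))$ via Bayes and summing explicitly over the $|\Symb|^{4k}$ possible $\tau$'s on $\partial \romb_{k-1}$, carries a per-scale error of order $|\Symb|^{4k} e^{-\alpha k}$, whose sum $\sum_k e^{-(\alpha - 4\log |\Symb|)k}$ converges precisely under the assumed rate. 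Absorbing the constant $C$ into an arbitrarily close rate $\alpha' \in (4 \log|\Symb|, \alpha)$ renders the total cumulative error strictly smaller than $c_0$, so $\mu(a \mid x(S)) > 0$ uniformly, which via Proposition~\ref{pSSM-pCmu} contradicts any hypothetical TSSM failure of $\supp(\mu)$.

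The main obstacle of the argument is precisely the sharpening step: the naive telescoping produces only a bound of the form $c_0 - C/(e^\alpha - 1)$, whose positivity depends on the absolute size of the SSM constant $C$ and not on the stated rate condition alone. Carrying out the refinement requires replacing the uniform SSM estimate with a configuration-count-sensitive one, using Bayes' rule and the geometric fact $|\partial \romb_{k-1}| = 4k$ to extract exactly the factor $|\Symb|^{4k}$ per scale that is matched by the hypothesis $\alpha > 4 \log |\Symb|$.
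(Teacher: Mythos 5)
Your reduction to showing $\Cmu>0$ via Corollary \ref{SSMcoroll} is exactly the paper's starting point, and your one-scale peeling inequality $\left|\mu(a\mid x(S))-\mu(a\mid x(S\cap\romb_{k-1}))\right|\le Ce^{-\alpha k}$ is a legitimate consequence of the MRF property and Lemma \ref{ssmSing} (both sides are weighted averages of $\mu(a\mid\delta)$ over boundary configurations of the component of $\romb_{k-1}\setminus S$ containing $0$, and these differ only on $\partial\romb_{k-1}$, at distance $k$ from $0$). The genuine gap is the step you yourself flag as the main obstacle, and your proposed repair does not close it. Telescoping only gives $\mu(a\mid x(S))\ge\mu(a)-C/(e^{\alpha}-1)$, and in any system with hard constraints this is vacuous: conditioning at distance $1$ changes single-site probabilities by an amount of order $1$, so any valid SSM rate must already have $Ce^{-\alpha}$ of that order, hence $C\gtrsim e^{\alpha}$ and the cumulative error is of order $1$, not smaller than $c_0$. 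The ``sharpened peeling lemma'' does not fix this: a per-scale error of order $|\Symb|^{4k}e^{-\alpha k}$ is larger, not smaller, than $Ce^{-\alpha k}$; convergence of $\sum_k e^{-(\alpha-4\log|\Symb|)k}$ says nothing about the sum being below $c_0$ (for $\alpha$ close to $4\log|\Symb|$ it is arbitrarily large); and absorbing $C$ into a slightly smaller rate $\alpha'$ only helps for $k$ beyond some $k_0(C)$, while the decisive errors sit at the small scales. Nothing in your outline actually uses $\alpha>4\log|\Symb|$ to produce a bound that is uniform in $C$, whereas the theorem is claimed for every $C$.

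The paper uses the rate hypothesis in a structurally different way, which is why it escapes this problem: everything happens at one sufficiently large scale $n$, chosen so that $\alpha-4\log|\Symb|>\frac{1}{n}\log(4Cn)$ (this is where $C$ is absorbed). With $K$ the connected component of $\Z^2\setminus S$ containing $0$, $K_n=K\cap\romb_{n-1}$ and $V=\partial K_n\setminus S$, a pigeonhole argument produces $v\in\Symb^{V}$ with $\mu(v\mid x(S))\ge|\Symb|^{-|V|}\ge|\Symb|^{-4n}$; the heart of the proof is the combinatorial claim that $vx(S)x(0)$ is globally admissible. If it were not, then $\mu(v\mid x(S)x(0))=0$ while $\mu(v\mid x(S)u)\ge|\Symb|^{-4n}$ for some symbol $u$, and a single application of SSM---applied to the configuration $v$ on $V$, with suitably chosen boundaries on $\partial T_n$, $T_n=(K\cap\romb_{2n-1})\setminus\{0\}$, differing only at distance at least $n$ from $V$---forces $|\Symb|^{-4n}\le 4nCe^{-\alpha n}$, contradicting the choice of $n$. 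Global admissibility then yields, via the MRF property as in Proposition \ref{TSSM-Cmu}, the uniform bound $p_{\mu,S}(x)\ge D_\mu(\romb_n)|\Symb|^{-4n}>0$. Note the two differences from your scheme: SSM is applied to the probability of a whole boundary configuration $v$ (whose probability is lower-bounded by counting, which is exactly where $4\log|\Symb|$ enters through $|\partial\romb_{n-1}|=4n$), not to the single-site marginal at $0$; and the conclusion is extracted as a contradiction at one large scale rather than by summing errors over all scales. Your proposal would need to be reworked along these lines before it constitutes a proof.
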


\begin{proof}
We will prove that $\mathrm{c}_\mu > 0$ and then conclude thanks to Corollary \ref{SSMcoroll}. Let's denote $X = \supp(\mu)$, and consider $x \in X$ and $S \Subset \Z^d \backslash \{0\}$. Our goal is to bound $p_{\mu,S}(x)$ away from zero, uniformly in $x$ and $S$. Let $K$ be the connected component of $\Z^d \backslash S$ containing $0$. Given $n \in \N$ such that:
\begin{equation}
\alpha - 4\log|\Symb| > \frac{1}{n}\log(4Cn),
\end{equation}
take the $n$-rhomboid $\romb_n$, and define $K_n := K \cap \romb_{n-1}$ and $V := \partial K_n \backslash S$. Similarly to the proof of Proposition \ref{TSSM-Cmu}, notice that $V \subseteq K \cap \partial \romb_{n-1}$, and $|\partial \romb_{n-1}| = 4n$ (here we consider $n$-rhomboids instead of $n$-blocks for reasons explained later). If $V =\emptyset$, then $\partial K_n \subseteq S$. Therefore,
\begin{equation}
p_{\mu,S}(x) = \mu(x(0) \vert x(S)) = \mu(x(0) \vert x(\partial K_n)) \geq \mu(x(K_n) \vert x(\partial K_n)) \geq D_\mu(\romb_n).
\end{equation}

Now, as in the proof of Proposition \ref{TSSM-Cmu}, let's suppose that $V \neq \emptyset$. By a counting argument, there must exist $v \in \Symb^{V}$ such that $\mu\left(v \middle\vert x(S)\right) \geq |\Symb|^{-|V|}$ and, in particular, $vx(S) \in \Leng(X)$.

By contradiction, let's suppose that $vx(S)x(0) \notin \Leng(X)$. Then, $\mu\left(v \middle\vert x(S)x(0)\right) = 0$. On the other hand, since $\mu\left(v \middle\vert x(S)\right) \geq |\Symb|^{-|V|}$, there must exist $u \in \Symb^{\{0\}}$ such that $\mu\left(v \middle\vert x(S)u\right) \geq |\Symb|^{-|V|}$ (by taking averages over configurations on $\{0\}$).

Now, let $T_{n} := (K \cap \romb_{2n-1}) \backslash \{0\}$, $F := \partial T_{n} \backslash (S \cup \{0\})$, and $H := \partial T_{n} \backslash F$. Notice that $0 \in H$. Also, $V \subseteq T_{n}$, so $F \sqcup H$ surrounds $V$. By taking averages over configurations in $F$, it is always possible to find $\eta_1,\eta_2 \in \Symb^{F}$ such that $\eta_1x(S)x(0), \eta_2x(S)u \in \Leng(X)$, and:
\begin{align}
 0 = \mu\left(v \middle\vert x(S)x(0)\right) \geq \mu\left(v \middle\vert \eta_1x(S)x(0)\right),	&	\mbox{ and } \mu\left(v \middle\vert x(S)u\right) \leq \mu\left(v \middle\vert \eta_2x(S)u\right).
 \end{align}

Take $\delta_1,\delta_2 \in \Symb^{\partial T_n}$ with $\delta_1 = \eta_1x(H \backslash \{0\})x(0)$ and $\delta_2 = \eta_2x(H \backslash \{0\})u$ and notice that $\dist(V,\Sigma_{\partial T_n}(\delta_1,\delta_2)) \leq \dist(V,\{0\} \cup F) = n$ (notice that it could be the case that $u = x(0)$ and $\eta_1 = \eta_2$). Then, we have that:
\begin{align}
|\Symb|^{-|V|}	&	\leq	\left|\mu\left(v \middle\vert x(S)x(0)\right) - \mu\left(v \middle\vert x(S)u\right)\right|	\\
			&	\leq	\left|\mu\left(v \middle\vert \eta_1x(S)x(0)\right) - \mu\left(v \middle\vert \eta_2x(S)u \right)\right|	\\
			&	=	\left|\mu^{\delta_1}\left(v\right) - \mu^{\delta_2}\left(v\right)\right|		\\
			&	\leq	|V|Ce^{-\alpha{n}},
\end{align}
by the MRF and SSM properties. Since $V \subseteq \partial \romb_{n-1}$, then $|V| \leq |\partial \romb_{n-1}|$ and:
\begin{equation}
|\Symb|^{-|\partial \romb_{n-1}|}	\leq	|\Symb|^{-|V|}	\leq	|V|Ce^{-\alpha{n}}	\leq	|\partial \romb_{n-1}|Ce^{-\alpha{n}}.
\end{equation}

\begin{figure}[ht]
\centering
\includegraphics[scale = 0.7]{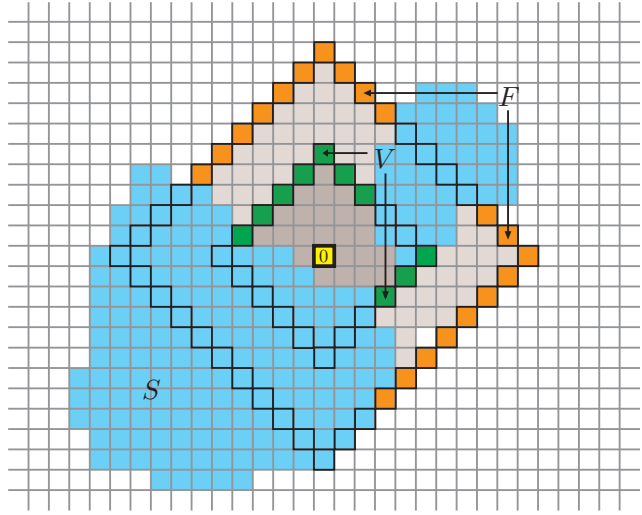}
\caption{Representation of  $\romb_n$,  $\romb_{2n}$ and the subsets $V$ (green), $F$ (orange) and $S$ (light blue) in the proof of Theorem \ref{rate}.}
\label{highrate}
\end{figure}

By taking logarithms, $-4n\log|\Symb|	\leq \log(4n) + \log C -\alpha{n}$, so:
\begin{equation}
\alpha \leq \frac{1}{n}\log(4Cn) + 4\log|\Symb| = 4\log|\Symb| + o(1),
\end{equation}
which is a contradiction with the fact that $\alpha > 4\log|\Symb|$ for $n$ sufficiently large (notice that the difference between $\alpha$ and $4\log|\Symb|$ determines the size of $|V|$ and its distance to $0$). Then, we conclude that $vx(S)x(0) \in \Leng(X)$. Therefore, by considering $y \in [vx(S)x(0)]_X$ and repeating the argument in the proof of Proposition \ref{TSSM-Cmu}, we have that:
\begin{equation}
p_{\mu,S}(x) \geq D_\mu(\romb_n)|\Symb|^{-|\partial \romb_{n-1}|} = D_\mu(\romb_n)|\Symb|^{-4n}.
\end{equation}

Since this lower bound is positive and independent of $x$ and $S$, taking the infimum over $S$, we have that $\Cmu \geq D_\mu(\romb_n)|\Symb|^{-4n} > 0$ and, by Corollary \ref{SSMcoroll}, we conclude that $\supp(\mu)$ exhibits TSSM.
\end{proof}

\begin{rem}
Recall that TSSM implies strong irreducibility, so in view of the preceding result SSM with high exponential rate implies strong irreducibility. In general, it is not known whether SSM implies strong irreducibility.
\end{rem}

\begin{note}
In general, if $\mu$ were a $\Z^d$ MRF satisfying SSM with rate $f(n) = Ce^{-\alpha{n^{d-1}}}$, we could modify the previous proof to conclude that $\supp(\mu)$ exhibits TSSM for sufficiently large $\alpha$. The reason why exponential SSM is not enough in this proof for an arbitrary $d$, is that only in $\Z^2$ the boundary of balls grows linearly with the radius. This is also related with the choice of $\romb_n$ over $\block_n$ in the previous proof, since $|\partial \romb_n| \leq |\partial \block_n|$ and this optimizes the bound for $\alpha$. In this sense, the previous proof should work with any lattice where the boundary of balls grows linearly with the radius (probably under a change of the bound for the rate $\alpha$).
\end{note}

\section{Examples}
\label{examples}

In this section we exhibit examples of n.n. SFTs which illustrate some if the mixing properties discussed in this work.

\subsection{A n.n. SFT that satisfies strong irreducibility, but not TSSM}

Clearly, the SSF property implies strong irreducibility (a way to see this is through Proposition \ref{ssftssm}). As it is mentioned in Example \ref{exmpC4}, $\mathcal{C}_2(k)$ (the $\Z^2$ $k$-checkerboard) satisfies SSF if and only if $k \geq 5$. For the n.n. SFT $\mathcal{C}_2(4)$, given $\eta \in \Symb(4)^{\partial\{0\}}$ (where $\Symb(4) := \{1,2,3,4\}$) defined by $\eta(e_1) = 1$, $\eta(e_2) = 2$, $\eta(-e_1) = 3$, and $\eta(-e_2) = 4$, there is no $a \in \Symb(4)^{\{0\}}$ such that $\eta a$ remains locally admissible, so $\mathcal{C}_2(4)$ does not satisfy SSF.  However, inspired in the SSF property, we have the following definition.

\begin{defn}
Given $N \in \N$, a n.n. SFT X satisfies $N$-fillability if, for every locally admissible configuration $\delta \in \Symb^T$, with $T \subseteq \Z^d \backslash [1,N]^d$, there exists $w \in \Symb^{[1,N]^d}$ such that $w\delta$ is locally admissible.
\end{defn}

\begin{rem}
In the previous definition, since $X$ is a n.n. SFT, it is equivalent to consider $\delta$ to have shape $T \subseteq \partial[1,N]^d$. In this sense, notice that 1-fillability coincides with the notion of SSF (which only considers locally admissible configurations on $\partial\{0\}$).
\end{rem}

\begin{lem}
The n.n. SFT $\mathcal{C}_2(4)$ satisfies $2$-fillability.
\end{lem}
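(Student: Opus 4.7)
My approach would be to reformulate the problem as a list-colouring question on the $2\times 2$ block and appeal to the $2$-choosability of the $4$-cycle.

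First I would observe that $[1,2]^2$, as an induced subgraph of $\Z^2$, is isomorphic to the $4$-cycle $C_4$ with the chain of adjacencies $(1,1)-(1,2)-(2,2)-(2,1)-(1,1)$, and that each inner vertex has exactly two neighbours in $\partial[1,2]^2$. By the remark preceding the lemma, since $\mathcal{C}_2(4)$ is a n.n.\ SFT, it suffices to treat locally admissible $\delta \in \{1,\dots,4\}^T$ with $T \subseteq \partial[1,2]^2$; for any such $\delta$ and each $a \in [1,2]^2$, I would define the list
\[
A_a := \{1,2,3,4\} \setminus \{\delta(b) : b \in T,\ b \sim a\},
\]
which satisfies $|A_a| \geq 4 - 2 = 2$. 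A configuration $w \in \{1,\dots,4\}^{[1,2]^2}$ with $w(a) \in A_a$ for every $a$ is automatically compatible with $\delta$ across the boundary edges, and together with internal properness gives local admissibility of $w\delta$ (the constraints within $T$ hold by hypothesis). So the lemma reduces to producing a proper $4$-colouring of $C_4$ from the lists $\{A_a\}_{a \in [1,2]^2}$.

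The key step is the $2$-choosability of $C_4$: any assignment of lists of size $\geq 2$ to its vertices admits a proper list colouring. I would verify this by a short case analysis. Write the cycle as $v_1 v_2 v_3 v_4$ with opposite pairs $(v_1,v_3)$ and $(v_2,v_4)$. If some opposite pair, say $(v_1,v_3)$, has lists with a common colour $c$, set $v_1 = v_3 = c$; then $v_2$ and $v_4$ can each be picked from lists of size $\geq 2$ with at most one forbidden value, so both choices remain. Otherwise both opposite pairs have disjoint lists, forcing $|L_i|=2$ and each pair to partition $\{1,2,3,4\}$; then WLOG $L_1=\{1,2\}$, $L_3=\{3,4\}$, and a direct inspection of the three essentially distinct choices for $(L_2,L_4)$ produces a valid colouring in each subcase.

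The main (minor) obstacle is the second case of the $2$-choosability argument, where greedy colouring can fail and one must choose the opposite pair carefully; conceptually everything else is bookkeeping between the two-dimensional geometry of $\partial[1,2]^2$ and the abstract graph structure.
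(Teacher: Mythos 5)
Your proposal is correct and is essentially the paper's own argument: the paper also reduces to a boundary configuration on $\partial[1,2]^2$, forms the sets $A^{\delta}_p$ of admissible values at each of the four sites (each of size at least $2$), and splits into the same cases according to whether a diagonally opposite pair of lists intersects, which is exactly your $2$-choosability-of-$C_4$ case analysis. Framing it explicitly as list colouring of the $4$-cycle is just a cleaner packaging of the identical combinatorial content.
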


\begin{proof}
Consider an arbitrary locally admissible configuration $\delta \in \Symb(4)^T$, with $T \subseteq \Z^d \backslash [1,2]^2$. We want to check if there is $w \in \Symb(4)^{[1,2]^2}$ such that $w\delta$ remains locally admissible. W.l.o.g., we can assume that $T = \partial [1,2]^2$, which is the worst case. Given a locally admissible boundary $\delta \in \Symb(4)^{\partial [1,2]^2}$ and $p \in [1,2]^2$, let's denote by $A^{\delta}_p$ the set of values $a \in \Symb(4)^{\{p\}}$ such that $a\delta$ remains locally admissible. Notice that $|A^{\delta}_p| \geq 2$, for every $p \in [1,2]^2$ and for every $\delta$. W.l.o.g., assume that $|A^{\delta}_p| = 2$, $A^{\delta}_{(1,1)} = \{1,2\}$, and consider $w \in \Symb(4)^{[1,2]^2}$ to be defined.

First, suppose that $A^{\delta}_{(1,1)} \cap A^{\delta}_{(2,2)} \neq \emptyset$ or $A^{\delta}_{(2,1)} \cap A^{\delta}_{(1,2)} \neq \emptyset$. By the symmetries of $[1,2]^2$ and the constraints, we may assume that $1 \in A^{\delta}_{(1,1)} \cap A^{\delta}_{(2,2)}$ and take $w(1,1) = w(2,2) = 1$, $w(2,1) \in A^{\delta}_{(2,1)} \backslash \{1\}$ and $w(1,2) \in A^{\delta}_{(1,2)} \backslash \{1\}$. It is easy to check that $w\delta$ is locally admissible.

On the other hand, if $A^{\delta}_{(1,1)} \cap A^{\delta}_{(2,2)} = \emptyset$ and $A^{\delta}_{(2,1)} \cap A^{\delta}_{(1,2)} = \emptyset$, we have that $A^{\delta}_{(0,0)} = \{1,2\}$ and $A^{\delta}_{(1,1)} = \{3,4\}$. We consider two cases based on whether a diagonal and off-diagonal coincide or intersect in exactly one element:
\begin{itemize}
\item If $A^{\delta}_{(2,1)} = \{1,2\}$ and $A^{\delta}_{(1,2)} = \{3,4\}$, we can take $w(1,1) = 1$, $w(2,1) = 2$, $w(1,2) = 3$, $w(2,2) = 4$.
\item If $A^{\delta}_{(2,1)} = \{1,3\}$ and $A^{\delta}_{(1,2)} = \{2,4\}$, we can take $w(1,1) = 1$, $w(2,1) = 3$, $w(1,2) = 2$, $w(2,2) = 4$.
\end{itemize}

In both cases we can check that $w\delta$ is locally admissible. The remaining cases are analogous.

\end{proof}

\begin{defn}
A set $W \subseteq \Z^d$ is called an \emph{$N$-shape} if it can be written as a union of translations of $[1,N]^d$, i.e. if there exists a set $S \subseteq \Z^d$ such that $W = \bigcup_{p \in S}\left(p+[1,N]^d\right)$. A set is called a \emph{co-$N$-shape} if it is the complement of an $N$-shape. Notice that every shape is a $1$-shape and co-$1$-shape.
\end{defn}

\begin{lem}
\label{lshape}
If a n.n. SFT $X$ satisfies $N$-fillability then, for any $N$-shape $W$ and every locally admissible configuration $\delta \in \Symb^{T}$, with $T \subseteq \Z^d \backslash W$, there exists $w \in \Symb^{W}$ such that $w\delta$ is locally admissible.

\begin{proof}
Let $W$ be an $N$-shape and $\delta \in \Symb^T$, for $T \subseteq \Z^d \backslash W$, a locally admissible configuration. Consider a minimal $S \subseteq \Z^d$ such that $W = \bigcup_{p \in S}\left(p+[1,N]^d\right)$, in the sense that $\bigcup_{p \in S'}\left(p+[1,N]^d\right) \subsetneq W$, for every $S' \subsetneq S$. Take an arbitrary $p^* \in S$. By $N$-fillability, consider $v \in \Symb^{\left(p^* + [1,N]^d\right)}$ such that $v\delta$ is locally admissible (notice that $T \subseteq \Z^d \backslash \left(p^* + [1,N]^d\right)$.

Now, take the set $W' = \bigcup_{p \in S \backslash\{p^*\}}{\left(p+[1,N]^d\right)}$. Notice that $W'$ is also an $N$-shape. By minimality of $S$, we have that $\emptyset \neq W \backslash W' \subseteq \left(p^*+[1,N]^d\right)$. Define $\delta' = v(W \backslash W')\delta$ and $T' = (W \backslash W') \cup T$. Then, $W'$ is an $N$-shape and $\delta' \in \Symb^{T'}$ is a locally admissible configuration, with $T' \subseteq \Z^d \backslash W'$ as in the beginning, but $W' \subsetneq W$.

Now, given $M \in \N$ and iterating the previous argument, we can always find $w \in \Symb^{W \cap \block_M}$ such that $w\delta$ is locally admissible. Since $M$ is arbitrary and $\Symb^{\Z^d}$ is a compact space, then there must exist $w \in \Symb^{W}$ such that $w\delta$ is locally admissible.
\end{proof}
\end{lem}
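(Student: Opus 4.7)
The strategy I would pursue is to fill $W$ one translate of $[1,N]^d$ at a time, using $N$-fillability repeatedly and treating each newly filled block as part of the ``boundary'' in subsequent steps. Since $W$ might be an infinite $N$-shape, the induction has to be supplemented by a compactness argument at the end.

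Concretely, first I would fix a (minimal) index set $S \subseteq \Z^d$ with $W = \bigcup_{p \in S}(p + [1,N]^d)$, minimal in the sense that no proper subset of $S$ produces $W$. Pick any $p^* \in S$. Then $T \subseteq \Z^d \setminus W \subseteq \Z^d \setminus (p^* + [1,N]^d)$, so $\delta$ qualifies as input to $N$-fillability applied to the translated block $p^* + [1,N]^d$ (shift-invariance of the SFT lets me apply the definition at any translate). This produces $v \in \Symb^{p^* + [1,N]^d}$ with $v\delta$ locally admissible. Setting $W' = \bigcup_{p \in S \setminus \{p^*\}}(p + [1,N]^d)$ (still an $N$-shape) and $\delta' = v(W \setminus W')\,\delta$ on $T' = (W \setminus W') \cup T \subseteq \Z^d \setminus W'$, minimality guarantees $W \setminus W'$ is a nonempty subset of $p^* + [1,N]^d$, so the new configuration $\delta'$ is well defined, locally admissible, and the remaining task is on the strictly smaller $N$-shape $W'$. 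Iterating this reduction handles the case when $S$ is finite.

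The main obstacle is that $S$ may be infinite, in which case the one-block-at-a-time induction never terminates. I would handle this by a standard compactness/diagonal argument: for each $M \in \N$, select a finite subset $S_M \subseteq S$ with $W_M := \bigcup_{p \in S_M}(p + [1,N]^d) \supseteq W \cap \block_M$, apply the finite case above to extend $\delta$ by some $w_M \in \Symb^{W_M}$ with $w_M\delta$ locally admissible, and then pick any extension $x_M \in \Symb^{\Z^d}$ of $w_M \delta$. By compactness of $\Symb^{\Z^d}$, the sequence $(x_M)$ has a convergent subsequence; the limit $x \in \Symb^{\Z^d}$ still agrees with $\delta$ on $T$ and is locally admissible on every edge (since local admissibility on any given edge is determined by finitely many coordinates, and all but finitely many $x_M$ agree with the limit there). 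Setting $w = x(W)$ gives the required configuration.

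A minor verification point worth flagging is that local admissibility is preserved when concatenating $v$ on $p^* + [1,N]^d$ with $\delta$: every forbidden n.n.\ pattern lies either entirely inside $p^* + [1,N]^d$ (ruled out because $v\delta$ from $N$-fillability is locally admissible), entirely inside $T$ (ruled out because $\delta$ is locally admissible), or spans an edge between the two (also covered by the conclusion of $N$-fillability, which includes the interaction across $\partial (p^* + [1,N]^d)$). This check is routine but essential for the inductive step to go through cleanly.
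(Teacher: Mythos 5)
Your proposal is correct and follows essentially the same route as the paper's proof: take a minimal covering of the $N$-shape by translates of $[1,N]^d$, strip off one block at a time via $N$-fillability while absorbing the newly filled sites into the boundary configuration, and handle infinite $W$ by a compactness argument. The only nitpick is your claim that the limit point $x$ is ``locally admissible on every edge'': since the extensions $x_M$ were arbitrary outside $W_M \cup T$, this need not hold for edges disjoint from $W \cup T$, but that is immaterial because only $x(W)\delta$ must be locally admissible, and your argument does establish exactly that.
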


\begin{defn}
Given $N \in \N$, a shift space $X$ is said to be \emph{$N$-strongly irreducible with gap $g$} if for any pair of non-empty (disjoint) finite subsets $U,V \Subset \Z^d$ with separation $\dist(U,V) \geq g$ such that $U \cup V$ is a co-$N$-shape and,
\begin{equation}
\forall u \in \Symb^{U}, v \in \Symb^{V}: [u]_X,[v]_X \neq \emptyset \implies [uv]_X \neq \emptyset.
\end{equation}
\end{defn}

\begin{prop}
If a n.n. SFT $X$ satisfies $N$-fillability, then it is $N$-strongly irreducible with gap $g = 2$.
\end{prop}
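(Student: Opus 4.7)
The plan is to directly reduce the statement to Lemma \ref{lshape}. Let $U, V \Subset \Z^d$ be disjoint with $\dist(U,V) \geq 2$ and such that $U \cup V$ is a co-$N$-shape, and let $u \in \Symb^U, v \in \Symb^V$ with $[u]_X, [v]_X \neq \emptyset$. I want to produce a single point in $X$ that agrees with $u$ on $U$ and $v$ on $V$.

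First I would set $W := \Z^d \setminus (U \cup V)$; by definition of co-$N$-shape, $W$ is an $N$-shape, so $W$ lies in the hypothesis range of Lemma \ref{lshape}. The next step is to verify that the concatenation $uv$, viewed as a configuration on $T := U \cup V \subseteq \Z^d \setminus W$, is locally admissible. Since $u$ and $v$ are globally admissible they are in particular locally admissible, so no forbidden n.n. configuration occurs entirely inside $U$ or entirely inside $V$. The only way $uv$ could fail to be locally admissible would be via a forbidden n.n. configuration using one site in $U$ and one in $V$, which requires those two sites to be adjacent; but this is excluded by the assumption $\dist(U,V) \geq 2 = g$. Hence $uv$ is locally admissible.

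Applying Lemma \ref{lshape} to the $N$-shape $W$ and the locally admissible configuration $uv$ on $T \subseteq \Z^d \setminus W$ yields $w \in \Symb^{W}$ such that the full point $x := w \cdot uv \in \Symb^{\Z^d}$ is locally admissible. Using the fact noted right before Theorem \ref{thm-friedland} that a point is locally admissible if and only if it is globally admissible, we obtain $x \in X$. Since $x(U) = u$ and $x(V) = v$, we conclude that $[uv]_X \neq \emptyset$, which is exactly $N$-strong irreducibility with gap $g = 2$.

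There is no real obstacle here: the whole work has already been done in Lemma \ref{lshape}, and the only thing to check separately is that the distance-$2$ separation of $U$ and $V$ prevents any new n.n. forbidden configuration from appearing at the interface, which is precisely why the gap is chosen to be $2$ rather than $1$.
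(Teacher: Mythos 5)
Your proposal is correct and follows essentially the same route as the paper: take $W = (U \cup V)^c$ (an $N$-shape), observe that $uv$ is locally admissible because $u,v$ are globally admissible and the separation $\dist(U,V)\geq 2$ rules out any forbidden nearest-neighbour configuration across the interface, apply Lemma \ref{lshape} to fill $W$, and conclude via the fact that a locally admissible point is globally admissible.
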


\begin{proof}
Let $U,V \Subset \Z^d$ such that $\dist(U,V) \geq 2$ and $U \cup V$ is a co-$N$-shape, and $u \in \Symb^{U}$, $v \in \Symb^{V}$ such that $[u]_X,[v]_X \neq \emptyset$. Then, take $\delta = uv \in \Symb^{U \cup V}$ and $W = (U \cup V)^c$. Notice that $\delta$ is a locally admissible configuration ($u$ and $v$ are globally admissible and $\dist(U,V) \geq 2$), and $W$ is an $N$-shape. Then, by Lemma \ref{lshape}, there exists $w \in \Symb^{W}$ such that $x = w\delta$ is locally admissible. Then, $x$ is a locally admissible point (then, globally admissible) such that $x \in [uv]_X$.
\end{proof}

\begin{prop}
If a shift space $X$ is $N$-strongly irreducible with gap $g$, then $X$ is strongly irreducible with gap $g+2N$.
\end{prop}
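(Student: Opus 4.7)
The plan is a direct reduction: given $U, V \Subset \Z^d$ disjoint with $\dist(U,V) \geq g+2N$ and globally admissible $u \in \Symb^U$, $v \in \Symb^V$, I would enlarge $U, V$ to sets $\tilde U \supseteq U$, $\tilde V \supseteq V$ satisfying the three hypotheses of $N$-strong irreducibility, namely (i) $\tilde U \cap \tilde V = \emptyset$, (ii) $\dist(\tilde U, \tilde V) \geq g$, and (iii) $\tilde U \cup \tilde V$ is a co-$N$-shape, and then lift $u, v$ to globally admissible configurations on the enlarged sets.

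The lifting is immediate: since $[u]_X, [v]_X \neq \emptyset$, fix $x \in [u]_X$ and $y \in [v]_X$ and set $\tilde u := x(\tilde U)$, $\tilde v := y(\tilde V)$. These are globally admissible as restrictions of points in $X$, so $[\tilde u]_X, [\tilde v]_X \neq \emptyset$. Once properties (i)--(iii) are verified for $\tilde U, \tilde V$, $N$-strong irreducibility with gap $g$ yields $[\tilde u \tilde v]_X \neq \emptyset$; restricting any such point to $U \cup V$ gives an element of $[uv]_X$, which is what is required for strong irreducibility with gap $g + 2N$.

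For the construction of $\tilde U, \tilde V$, I would partition $\Z^d$ by the tiling $Q_a := Na + [1,N]^d$, $a \in \Z^d$, and let $\tilde U$ be the union of those cells that intersect $U$, and likewise $\tilde V$ for $V$. Then $\tilde U, \tilde V$ are both $N$-shapes, and crucially the complement $\Z^d \setminus (\tilde U \cup \tilde V)$ is a disjoint union of entire cells $Q_a$ not meeting $U \cup V$, so it is an $N$-shape and $\tilde U \cup \tilde V$ is a co-$N$-shape, giving (iii). For (i) and (ii), the separation $\dist(U,V) \geq g+2N$ has to be large enough that distinct cells $Q_a$ absorb $U$ and $V$ separately, and the two collections of cells are themselves $g$-separated after accounting for the at-most-$N$ distance from any cell point to its $U$- or $V$-representative.

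The main obstacle is precisely this geometric bookkeeping, since in $\ell^{1}$ the diameter of an $N$-cube is $d(N-1)$, which for $d \geq 3$ can exceed the $2N$ slack in the hypothesis $\dist(U,V) \geq g + 2N$. The cleanest fix, I would argue, is to choose the translate of the tiling depending on $U$ and $V$ (or equivalently replace the fixed cell partition by a greedy thickening of $U$ and $V$ by individual $N$-cubes placed so that each attached $N$-cube is at $\ell^1$-distance at most $N$ from its parent site in $U$ or $V$, and each unattached $N$-cube lies strictly between the two clusters). With such a placement, property (iii) is built in, (i) follows because an $N$-cube attached to a point of $U$ is within distance $N$ of $U$ and similarly for $V$ (so if they met, $\dist(U,V) \leq 2N$, contradicting the hypothesis for $g \geq 1$), and (ii) follows by the same triangle inequality calculation: $\dist(\tilde U, \tilde V) \geq \dist(U,V) - 2N \geq g$. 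Once this geometric step is carried out the rest of the argument is mechanical.
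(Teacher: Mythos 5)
Your core construction is exactly the paper's: partition $\Z^d$ into the cells $Na+[1,N]^d$, let $U'$ (resp.\ $V'$) be the union of cells meeting $U$ (resp.\ $V$), lift $u,v$ by restricting points $x\in[u]_X$, $y\in[v]_X$ to $U',V'$, apply $N$-strong irreducibility, and restrict back to $U\cup V$. You are also right to be suspicious of the distance bookkeeping: the paper simply asserts $\dist(U',V')\geq\dist(U,V)-2N$, whereas a cell has $\ell^1$-diameter $d(N-1)$, so the fixed-partition construction honestly gives only $\dist(U',V')\geq\dist(U,V)-2d(N-1)$, which is weaker than claimed as soon as $d(N-1)>N$ (already for $d=2$, $N=3$: take $U=\{(1,1)\}$ and $V=\{(6,6)\}$, so $\dist(U,V)=10$ while the cells $[1,3]^2$ and $[4,6]^2$ are at distance $2<10-2N$). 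So the subtlety you flag is genuine and applies to the paper's own one-line estimate; with the honest bound one obtains strong irreducibility with gap $g+2d(N-1)$, which coincides with $g+2N$ in the case the paper actually uses ($d=2$, $N=2$, giving gap $6$ for $\mathcal{C}_2(4)$).

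However, your proposed repair does not close the gap you identified. First, condition (iii) is not ``built in'' for a greedy thickening by individually placed $N$-cubes: for $\tilde U\cup\tilde V$ to be a co-$N$-shape, every point of the complement must lie in an $N$-cube disjoint from $\tilde U\cup\tilde V$, and ad hoc placements typically leave corridors of width less than $N$ around the attached cubes that cannot be so covered; the fixed partition is used precisely because its unused cells tile the complement. A single global translate of the partition (your other variant) does preserve (iii), but it does not improve the worst-case expansion. Second, your triangle-inequality step needs $\tilde U\subseteq\neig_N(U)$ and $\tilde V\subseteq\neig_N(V)$, not merely that each attached cube is at distance at most $N$ from its parent site: any $N$-cube containing a given point $p$ contains a point at $\ell^1$-distance at least $d\lceil(N-1)/2\rceil$ from $p$, so when $U$ is a singleton and $d\lceil(N-1)/2\rceil>N$, no choice of covering cubes gives the containment, and the bound $\dist(\tilde U,\tilde V)\geq\dist(U,V)-2N$ cannot be derived this way. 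The honest conclusions available to you are: prove the proposition with gap $g+2d(N-1)$ (which your construction, like the paper's, does deliver, and which suffices for every later use in the paper), or find a genuinely different argument if you insist on the constant $2N$ for all $d$ and $N$.
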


\begin{proof}
Let $U,V \Subset \Z^d$ with $\dist(U,V) \geq g$, and $u \in \Symb^{U}$, $v \in \Symb^{V}$ such that $[u]_X,[v]_X \neq \emptyset$. Consider the partition $\Z^d = \coprod_{p \in N\Z^d}{\left(p + [1,N]^d\right)}$ and the sets:
\begin{align}
S_1	&	:= \left\{p \in N\Z^d: \left(p + [1,N]^d\right) \cap U \neq \emptyset \right\},	\\
S_2	&	:= \left\{p \in N\Z^d: \left(p + [1,N]^d\right) \cap V \neq \emptyset \right\}.
\end{align}

Notice that $U \subseteq U' := \coprod_{p \in S_1}{p + [1,N]^d}$ and $V \subseteq V' := \coprod_{p \in S_2}{p + [1,N]^d}$. Take $x \in [u]_X$ and $y \in [v]_X$, and consider the configurations $u' = x(U')$ and $v' = y(V')$. Then, we have that $[u']_X,[v']_X \neq \emptyset$, $U' \cup V'$ is a co-$N$-shape and $\dist(U',V') \geq \dist(U,V) - 2N \geq (g + 2N) - 2N = g$ so, by $N$-strong irreducibility, we conclude that $\emptyset \neq [u'v']_X \subseteq [uv]_X$.
\end{proof}

\begin{cor}
\label{fillstrirr}
If a n.n. SFT $X$ satisfies $N$-fillability, then it is strongly irreducible with gap $2(N+1)$.
\end{cor}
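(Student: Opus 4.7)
The plan is to simply chain together the two propositions immediately preceding the corollary, since they are set up exactly to yield this combination. Concretely, the first proposition says $N$-fillability implies $N$-strong irreducibility with gap $g = 2$, and the second proposition says $N$-strong irreducibility with gap $g$ implies ordinary strong irreducibility with gap $g + 2N$. Substituting $g = 2$ into the second yields strong irreducibility with gap $2 + 2N = 2(N+1)$.

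So the entire proof amounts to: assume $X$ is a n.n. SFT satisfying $N$-fillability; apply the first proposition to conclude $X$ is $N$-strongly irreducible with gap $2$; then apply the second proposition with this value of $g$ to conclude $X$ is strongly irreducible with gap $2(N+1)$.

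There is essentially no obstacle here, since both ingredients have already been established. The only thing to double-check when writing it is the arithmetic $g + 2N = 2 + 2N = 2(N+1)$, and that the hypotheses of each proposition are verified (the first requires a n.n. SFT with $N$-fillability, which is exactly what we assumed; the second only requires a shift space that is $N$-strongly irreducible, which is what the first proposition delivers). Thus the corollary follows directly as a one-line combination of the previous two results.
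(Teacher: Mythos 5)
Your proof is correct and is exactly the intended argument: the corollary is the immediate combination of the two preceding propositions, with $g = 2$ giving gap $2 + 2N = 2(N+1)$. This matches the paper's (implicit) reasoning, so nothing further is needed.
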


\begin{cor}
The n.n. SFT $\mathcal{C}_2(4)$ is strongly irreducible with gap $g = 6$.
\end{cor}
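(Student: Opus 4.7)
The plan is essentially to chain together the two results just established. We have already shown that $\mathcal{C}_2(4)$ satisfies $2$-fillability (the lemma immediately preceding), and we have Corollary \ref{fillstrirr}, which asserts that any n.n. SFT satisfying $N$-fillability is strongly irreducible with gap $2(N+1)$. So the entire proof consists of invoking Corollary \ref{fillstrirr} with $N=2$ applied to $X = \mathcal{C}_2(4)$, which immediately yields strong irreducibility with gap $2(2+1) = 6$.

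Concretely, I would write: ``By the preceding lemma, $\mathcal{C}_2(4)$ satisfies $2$-fillability. Applying Corollary \ref{fillstrirr} with $N=2$, we conclude that $\mathcal{C}_2(4)$ is strongly irreducible with gap $2(N+1) = 6$.'' No further combinatorial work is needed because all the casework for the $4$-colouring constraint was already carried out inside the proof of $2$-fillability, and the passage from $N$-fillability to strong irreducibility was handled in full generality by the earlier chain of propositions (going through $N$-strong irreducibility and the tiling of $\Z^d$ by translates of $[1,N]^d$).

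There is no real obstacle here; the only thing to double-check is that the numerical bound $2(N+1)$ with $N=2$ gives exactly $6$ and that Corollary \ref{fillstrirr} is stated in the form we need (it is). If one wanted a slightly sharper bound one could try to run the pivot-style argument more carefully for $\mathcal{C}_2(4)$ directly, but for the stated claim the two-line proof above is optimal in the sense that it uses precisely the machinery just developed for this purpose.
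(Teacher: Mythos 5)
Your proof is correct and is exactly the argument the paper intends: the corollary follows by combining the $2$-fillability of $\mathcal{C}_2(4)$ with Corollary \ref{fillstrirr} for $N=2$, giving gap $2(2+1)=6$. Nothing further is needed.
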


We have concluded $\mathcal{C}_2(k)$ is strongly irreducible if and only if $k \geq 4$ (the cases $k=2,3$ do not even satisfy the D-condition \cite{1-marcus}). On the other hand, $\mathcal{C}_2(k)$ satisfies TSSM (in particular, SSF) if and only if $k \geq 5$. In particular, TSSM fails when $k = 4$, as the next result shows.

\begin{prop}
\label{C4}
The n.n. SFT $\mathcal{C}_2(4)$ does not satisfy TSSM.
\end{prop}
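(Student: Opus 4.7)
My plan is to show that TSSM fails at every gap $g \in \N$ by exhibiting an explicit parity obstruction. The idea is to sandwich a horizontal segment between two rows of $S$ whose colorings together consume both values in $\{1,2\}$ vertically at every cell of row $0$, reducing the segment to an effective proper $2$-coloring in $\{3,4\}$; the resulting alternation makes the endpoints rigidly dependent on the parity of the distance.

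Concretely, given $g$, I would choose any odd integer $G \geq g$ and set
\[
U = \{(0,0)\}, \qquad V = \{(G,0)\}, \qquad S = \{(i,j) : 0 \leq i \leq G,\; j \in \{-1,1\}\},
\]
so that $\dist(U,V) = G \geq g$ and $U, V, S$ are pairwise disjoint. Define $s(i,1) = 1$ if $i$ is even and $2$ if $i$ is odd, and let $s(i,-1)$ be the reversed pattern (so $s(i,-1) = 2$ for even $i$ and $1$ for odd $i$). Take $u(0,0) = v(G,0) = 3$. For each $0 \leq i \leq G$, the cell $(i,0)$ is vertically adjacent to one cell coloured $1$ and one coloured $2$, so any proper $4$-coloring extending $s$ must satisfy $x(i,0) \in \{3,4\}$; together with horizontal propriety, row $0$ on $[0,G]$ must strictly alternate between $3$ and $4$.

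Three verifications remain. First, $[us]_{\mathcal{C}_2(4)} \neq \emptyset$: extend by placing row $0 = 3,4,3,4,\dots$, rows $\pm 1$ as $s$ continued periodically, and fill the remaining rows with any compatible $2$-periodic proper coloring pattern (e.g.\ odd rows $\equiv 1,2,1,2,\dots$, even rows $\equiv 3,4,3,4,\dots$, with appropriate phases). Second, $[sv]_{\mathcal{C}_2(4)} \neq \emptyset$: use the opposite phase on row $0$, namely $4,3,4,3,\dots$, so that $(G,0) = 3$ holds precisely because $G$ is odd. Third, $[usv]_{\mathcal{C}_2(4)} = \emptyset$: the alternation together with $x(0,0) = 3$ forces $x(G,0) = 4$ when $G$ is odd, contradicting $v(G,0) = 3$. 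Since $g$ was arbitrary, TSSM with any gap is defeated.

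I do not anticipate a real technical obstacle. The only delicate points are making sure the two rows of $S$ jointly annihilate both values $\{1,2\}$ (rather than each individually restricting only one), and choosing the parity of $G$ so the enforced endpoints of the $\{3,4\}$-alternation conflict with the prescribed $u$ and $v$. The global extensions used to verify $[us] \neq \emptyset$ and $[sv] \neq \emptyset$ are routine periodic bookkeeping, since rows at distance $\geq 2$ from row $0$ are unconstrained by $s$.
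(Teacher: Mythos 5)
Your proof is correct and follows essentially the same route as the paper: two rows of $1$s and $2$s in opposite phase force the intermediate row-$0$ segment to alternate in $\{3,4\}$, and a parity clash at the endpoints kills $[usv]_{\mathcal{C}_2(4)}$ while $[us]_{\mathcal{C}_2(4)}$ and $[sv]_{\mathcal{C}_2(4)}$ survive. The only (cosmetic) difference is that the paper prescribes $u=3$, $v=4$ at even separation $4g$, whereas you prescribe $u=v=3$ at odd separation $G$.
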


\begin{proof}
Take $g \in \N$, consider the sets $U = \{(-2g,0)\}$, $V = \{(2g,0)\}$ and $S = [-2g,2g] \times \{-1,1\}$, and the configurations $u \in \Symb(4)^U$, $v \in \Symb(4)^V$ and $s \in \Symb(4)^S$ (see Figure \ref{checkerboard}) defined by $u = 3$, $v = 4$, and:
\begin{equation}
s((i,j)) = 
\begin{cases}
1 & \mbox{if ($j=1$ and $i \in 2\Z$) or ($j=-1$ and $i \notin 2\Z$)}, \\
2 & \mbox{if ($j=1$ and $i \notin 2\Z$) or ($j=-1$ and $i \in 2\Z$)}.
\end{cases}
\end{equation}

Then, it can be checked that $[us]_{\mathcal{C}_2(4)}, [sv]_{\mathcal{C}_2(4)} \neq \emptyset$. However, for all $x \in [us]_{\mathcal{C}_2(4)}$ and $y \in [sv]_{\mathcal{C}_2(4)}$ we have that $x((0,0)) = 3 \neq 4 = y((0,0))$. Therefore, $[usv]_{\mathcal{C}_2(4)} = \emptyset$. Since $g$ was arbitrary and $\dist(U,V) = 4g \geq g$, we conclude that $\mathcal{C}_2(4)$ does not satisfy TSSM.
\end{proof}

A by-product of the construction from the previous counterexample is the following result, which also illustrates how TSSM is related with SSM.

\begin{prop}
\label{C4SSM}
Let $\mu$ be a $\Z^2$ MRF such that $\supp(\mu) = \mathcal{C}_2(4)$. Then, $\mu$ cannot satisfy SSM.
\end{prop}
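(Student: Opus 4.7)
The plan is to derive a contradiction by exhibiting, for each $g$, two boundary conditions on a cleverly chosen shape that force the value at a distant site in opposite ways, violating the SSM inequality. I assume for contradiction that $\mu$ satisfies SSM with rate $f(n)$, and I let $s$ denote the $\{1,2\}$-alternating pattern on $[-2g,2g] \times \{-1,1\}$ used in the proof of Proposition \ref{C4}.

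I would take $W := [-2g, 2g] \times \{0\}$, so that $\partial W = [-2g,2g] \times \{-1,1\} \cup \{(-2g-1,0),(2g+1,0)\}$. The rigidity fact underlying everything is that, given $s$ on the two flanking rows, every site $(i,0) \in W$ has both its $y = \pm 1$ neighbours in $\{1,2\}$, so its value is forced to lie in $\{3,4\}$; consequently the entire middle row must alternate between $3$ and $4$, with the parity pinned down by the value at any single site. I would then define two boundary configurations $\delta_1, \delta_2 \in \Symb(4)^{\partial W}$ agreeing with $s$ on the flanking rows and both equal to, say, $1$ at $(2g+1,0)$, but differing at $(-2g-1, 0)$: setting $\delta_1((-2g-1,0)) = 4$ forces $(-2g,0) = 3$ and hence $(2g,0) = 3$, while setting $\delta_2((-2g-1,0)) = 3$ forces $(-2g,0) = 4$ and hence $(2g,0) = 4$.

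Both $\delta_1$ and $\delta_2$ lie in $\Leng(\mathcal{C}_2(4))$ (they can be extended to a periodic $4$-colouring, or one can appeal to strong irreducibility via Corollary \ref{fillstrirr}), so $\mu(\delta_i) > 0$ for $i = 1, 2$. By the rigidity above, any $x \in \supp(\mu) = \mathcal{C}_2(4)$ with $x(\partial W) = \delta_1$ must satisfy $x((2g,0)) = 3$, and similarly under $\delta_2$ the value is forced to $4$; therefore $\mu^{\delta_1}((2g,0) = 3) = 1$ and $\mu^{\delta_2}((2g,0) = 3) = 0$. Since $\Sigma_{\partial W}(\delta_1, \delta_2) = \{(-2g-1, 0)\}$ lies at distance $4g+1$ from $(2g,0)$, SSM applied to the singleton $U = \{(2g,0)\}$ yields $1 \leq f(4g+1)$. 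Letting $g \to \infty$ contradicts $f(n) \to 0$.

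I do not expect a substantial obstacle: the only care needed is in verifying that $\delta_1, \delta_2$ are globally admissible and that the MRF property propagates the combinatorial rigidity to conditional probabilities $0$ and $1$. A tempting shortcut would be to combine SSM with Proposition \ref{pSSM-pCmu} and the failure of TSSM (Proposition \ref{C4}), but Proposition \ref{pSSM-pCmu} requires $\mathrm{c}_\mu > 0$, and this is not guaranteed a priori for an arbitrary MRF supported on $\mathcal{C}_2(4)$; the direct construction above sidesteps this issue by producing an explicit quantitative obstruction.
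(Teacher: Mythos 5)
Your construction is correct and is essentially the paper's own argument: the same alternating $\{1,2\}$ configuration $s$ on the rows $y=\pm 1$ forces a rigid $\{3,4\}$-alternation on the middle row whose phase is pinned by a boundary value at an end of a thin horizontal shape, the paper's version differing only cosmetically (it uses boundaries differing at \emph{both} endpoints, reads off the centre site at distance $2n_0$, and fixes $n_0$ with $f(2n_0)<1$ rather than letting the gap grow). The one caveat is your parenthetical appeal to strong irreducibility via Corollary \ref{fillstrirr} to certify global admissibility of $\delta_1,\delta_2$ — that corollary glues two globally admissible configurations at distance at least the gap, whereas the pieces of each $\delta_i$ are within distance $2$ of one another, so it does not apply directly — but your alternative justification by exhibiting an explicit extension (a valid colouring of the plane containing $\delta_i$) does work, exactly as the analogous unstated check is needed in the paper's proof.
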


\begin{proof}
Let's suppose that there is a $\Z^2$ MRF $\mu$ with $\supp(\mu) = \mathcal{C}_2(4)$ that satisfies SSM with rate $f(n)$. Take $n_0 \in \N$ such that $f(n) < 1$, for all $n \geq n_0$. Consider the set $V = [-2n_0+1,2n_0-1] \times \{0\} \Subset \Z^2$ and its boundary $\partial V =  [-2n_0,2n_0] \times \{-1,1\} \cup \{(-2n_0,0)\} \cup  \{(0,2n_0)\} = U \cup S \cup V$, where $U$, $S$ and $V$ are as in Proposition \ref{C4}. Take $\delta_1,\delta_2 \in \Symb(4)^{\partial V}$ defined by $\delta_1(S) = \delta_2(S) = s$ (where $s$ is also as in Proposition \ref{C4}), $\delta_1((-2n_0,0)) = \delta_1((2n_0,0)) = 3$ and $\delta_2((-2n_0,0)) = \delta_2((2n_0,0)) = 4$. It is easy to see that $\delta_1$ and $\delta_2$ are both globally admissible and, in particular, $\mu(\delta_1),\mu(\delta_2) > 0$. Now, if we consider the configuration $w = 3$ with shape $W = \{(0,0)\}$, we have that:
\begin{equation}
1 = |1 - 0| = \left| \mu^{\delta_1}(w) - \mu^{\delta_2}(w) \right| \leq f(2n_0) < 1,
\end{equation}
which is a contradiction. Then, $\mu$ cannot satisfy SSM.
\end{proof}

\begin{rem}
It has been suggested \cite{1-salas} that the uniform Gibbs measure supported on $\mathcal{C}_2(4)$ satisfies exponential WSM. Here we have proven that SSM is not possible for any MRF supported on $\mathcal{C}_2(4)$ and for any rate, not necessarily exponential. The counterexample in Proposition \ref{C4SSM} corresponds to a family of very particular shapes where SSM fails and not what we could call a ``common shape'' (like $\block_n$, for example), but is enough for discarding the possibility of SSM if we stick to its definition. We also have to consider that this family of configurations (and other variations, with different colours and different narrow shapes) can appear as sub-configurations in more general shapes and still produce combinatorial long-range correlations.
\end{rem}

\begin{figure}[ht]
\centering
\includegraphics[scale = 0.75]{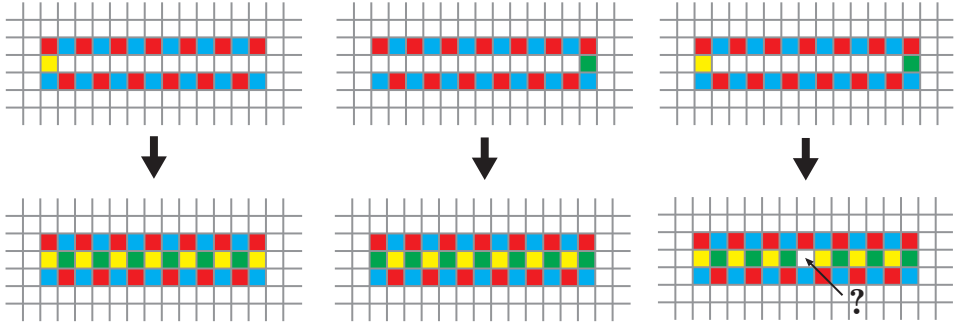}
\caption{Proof that $\mathcal{C}_2(4)$ does not satisfy TSSM nor SSM.}
\label{checkerboard}
\end{figure}

\subsection{A n.n. SFT that satisfies TSSM, but not SSF}
The Iceberg model was considered in \cite{1-burton} as an example of a strongly irreducible $\Z^2$ n.n. SFT with multiple measures of maximal entropy. Given $M \geq 2$, and the alphabet $\Symb(M) = \left\{-M,\dots,-1,+1,\dots,+M\right\}$, the Iceberg model $\mathcal{I}_M$ is defined as:
\begin{equation}
\mathcal{I}_M := \left\{x \in \Symb(M)^{\Z^d}: x(p) \cdot x(p+e_i) \geq -1, \mbox{ for all } p \in \Z^d, i = 1,\dots,d \right\}.
\end{equation}

In the following, we show that for every $M \geq 2$, the Iceberg model satisfies TSSM, but not SSF. In particular, this provides an example of a n.n. SFT satisfying TSSM with multiple measures of maximal entropy.

It is easy to see that $\mathcal{I}_M$ does not satisfy SSF, since $+M$ and $-M$ cannot be at distance less than $3$. In particular, we can take the configuration $\eta \in \Symb^{\partial\{0\}}$ given by $\eta(e_1) = \eta(e_2) = +M$ and $\eta(-e_1) = \eta(-e_2) = -M$, which does not remain locally admissible for any $a \in \Symb^{\{0\}}$. On the other hand, $\mathcal{I}_M$ satisfies TSSM, as the next proposition shows.

\begin{prop}
\label{iceberg}
For every $M \geq 2$, the Iceberg model $\mathcal{I}_M$ satisfies TSSM with gap $g = 3$.
\end{prop}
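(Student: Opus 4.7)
The plan is to apply Lemma \ref{lemSing} and reduce the claim to the singleton case $U=\{p\}$, $V=\{q\}$ with $\dist(p,q)\geq 3$. Given $u\in\Symb(M)^{\{p\}}$, $s\in\Symb(M)^{S}$, $v\in\Symb(M)^{\{q\}}$ with $[us]_{\mathcal{I}_M},[sv]_{\mathcal{I}_M}\neq\emptyset$, I would fix witnesses $x\in[us]_{\mathcal{I}_M}$ and $y\in[sv]_{\mathcal{I}_M}$ and assemble a globally admissible point $z\in[usv]_{\mathcal{I}_M}$ by patching together sign-flattened versions of $x$ and $y$ outside $\{p,q\}\cup S$.

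The construction rests on the basic algebraic feature of the Iceberg constraint: for $a,b\in\Symb(M)$, one has $ab\geq -1$ iff either $\mathrm{sign}(a)=\mathrm{sign}(b)$ or $|a|=|b|=1$; in particular, $|a|\geq 2$ forces $\mathrm{sign}(b)=\mathrm{sign}(a)$, and any pair of values in $\{+1,-1\}$ is automatically compatible. Accordingly I would set
\[
z(r) := \begin{cases} u & \text{if } r = p, \\ v & \text{if } r = q, \\ s(r) & \text{if } r \in S, \\ \mathrm{sign}(y(r)) & \text{if } r \in \partial\{q\} \setminus S, \\ \mathrm{sign}(x(r)) & \text{otherwise.} \end{cases}
\]
By design $z\in\Symb(M)^{\Z^d}$ with $z(p)=u$, $z(q)=v$, $z(S)=s$, so only the admissibility $z\in\mathcal{I}_M$ remains.

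The verification step hinges on two geometric facts: since $\dist(p,q)\geq 3$, the site $p$ has no neighbour in $\{q\}\cup\partial\{q\}$ (so edges leaving $p$ land either in $S$ or in the $\mathrm{sign}(x)$-region); and any two distinct neighbours of $q$ in $\Z^d$ are at distance $2$, so $\partial\{q\}$ contains no adjacent pair. I would then walk through the edge types: edges inside $S$ inherit admissibility from $s$; edges between two $\pm 1$ vertices have product $\pm 1$; edges from $p$ to an $S$-neighbour or a $\mathrm{sign}(x)$-neighbour come directly from $x$-admissibility applied to $u$ and $x(r')$, with $|u|\geq 2$ handled via the forced sign-matching above; edges from $q$ to $\partial\{q\}\cap S$ or $\partial\{q\}\setminus S$ reduce symmetrically to $y$-admissibility; and edges from $S$ into the $\mathrm{sign}(y)$- or $\mathrm{sign}(x)$-region reduce via $y$- or $x$-admissibility to the dichotomy ``same sign $\Rightarrow$ positive product'' versus ``opposite signs $\Rightarrow$ both absolute values equal $1$ $\Rightarrow$ product $-1$''.

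The main obstacle will be nothing more than this case bookkeeping; the substantive content is the algebraic observation making sign-flattening compatible with the hard constraint, together with the separation $\dist(p,q)\geq 3$ which keeps the ``sign-of-$y$'' region and $\{p\}$ non-adjacent so the two flattenings never have to agree at an interface. Once the edge checks go through, $z$ witnesses $[usv]_{\mathcal{I}_M}\neq\emptyset$, and Lemma \ref{lemSing} upgrades the singleton statement to TSSM with gap $3$.
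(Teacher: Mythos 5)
Your construction is correct, and it reaches the same conclusion as the paper by a genuinely different assembly of the same core idea. The paper's proof also starts from Lemma \ref{lemSing} and also exploits sign-flattening (it replaces every value outside $S\cup V$ of a single witness $x\in[sv]_{\mathcal{I}_M}$ by $\pm 1$), but it then has to \emph{modify} that one flattened point near $p$: a case split on whether $|u|=1$ or $|u|\geq 2$, and in the latter case a contradiction argument showing that every $-1$ on $\partial\{p\}\setminus S$ can be flipped to $+1$ (if some flip were blocked, the blocking value in $\{-M,\dots,-2\}$ would have to lie in $S$, forcing the neighbour of $p$ negative and contradicting $[us]_{\mathcal{I}_M}\neq\emptyset$). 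You instead take two witnesses, $x\in[us]_{\mathcal{I}_M}$ and $y\in[sv]_{\mathcal{I}_M}$, flatten both, and glue: $x$-signs govern the bulk and the neighbourhood of $p$, $y$-signs govern $\partial\{q\}\setminus S$, and the all-$\pm 1$ sea makes every interface between the two flattenings automatically admissible. This buys symmetry in $p$ and $q$ and eliminates both the case split on $|u|$ and the flipping/contradiction step; the price is the explicit bookkeeping you describe, and the two geometric facts (no adjacent pair inside $\partial\{q\}$, and $\partial\{p\}$ disjoint from $\{q\}\cup\partial\{q\}$) — the latter being exactly where the gap $3$ enters, just as $\dist(U,V)\geq 3$ is what keeps $V$ away from $\partial\{p\}$ in the paper's flipping argument. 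Your edge enumeration is exhaustive (the only cross-flattening edges are $\pm1$-to-$\pm1$, and no neighbour of $q$ lies in the ``otherwise'' region since $\partial\{q\}$ is covered by $S$ and $\partial\{q\}\setminus S$), and the algebraic dichotomy $ab\geq -1$ iff equal signs or $|a|=|b|=1$ is used correctly, so the proof goes through as stated.
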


\begin{proof}
Consider Lemma \ref{lemSing} and take disjoint non-empty subsets $U,S,V \Subset \Z^2$ with $\dist(U,V) \geq 3$ and $|U| = |V| = 1$. Given $u \in \Symb^U$, $s \in \Symb^S$ and $v \in \Symb^V$, suppose that $[us]_{\mathcal{I}_M}, [sv]_{\mathcal{I}_M} \neq \emptyset$. Next, take $x \in [sv]_{\mathcal{I}_M}$ and define a new point $z$ given by:
\begin{equation}
z(p) = 
\begin{cases}
x(p)	& \mbox{ if } p \in S \cup V	,									\\
+1	&	\mbox{ if } p \in (S \cup V)^c \mbox{ and }	x(p) \in \{+1,\dots,+M\},	\\
-1	&	\mbox{ if } p \in (S \cup V)^c \mbox{ and } x(p) \in \{-M,\dots,-1\}.
\end{cases}
\end{equation}

It is not hard to see that $z$ is a valid point in $[sv]_{\mathcal{I}_M}$. Now, let's construct a point $y \in [usv]_{\mathcal{I}_M}$ from $z$.

\bigskip
\noindent
{\bf Case 1: $u = \pm 1$.} W.l.o.g., suppose that $u = +1$. Now, since $[us]_{\mathcal{I}_M} \neq \emptyset$, all the values in $z(\partial U \cap S)$ must belong to $\{-1,+1,\dots,+M\}$. On the other hand, since $\partial U \backslash S \subseteq (S \cup V)^c$, all the values in $z(\partial U \backslash S)$ belong to $\{-1,+1\}$. Then, all the values in $z(\partial U)$ belong to $\{-1,+1,\dots,+M\}$ and we can replace $z(U)$ by $+1$ in order to get a valid point $y$ from $z$, such that $y \in [usv]_{\mathcal{I}_M}$.

\bigskip
\noindent
{\bf Case 2: $u \neq \pm 1$.} W.l.o.g., suppose that $u = +M$. Then, all the values in $z(\partial U \cap S)$ belong to $\{+1,\dots,+M\}$. We claim that we can switch every $-1$ in $\partial U \backslash S$ to a $+1$. If it is not possible to do this for some site $p^* \in \partial U \backslash S$, then its neighbourhood $\partial \{p^*\}$ contains a site with value in $\{-M,\dots,-2\}$ and, in particular, different from $+1$ and $-1$. Then, $\partial \{p^*\}$ necessarily intersects $S$ (and not $V$, because $\dist(U,V) \geq 3$). Then, a site in $\partial \{p^*\} \cap S \neq \emptyset$ is fixed to some value in $\{-M,\dots,-2\}$ and then the site $p^*$ must take a value in $\{-M,\dots,-1\}$, given $s$. Therefore, $U$ cannot take a value in $\{+2,\dots,+M\}$, contradicting the fact that $[us]_{\mathcal{I}_M} \neq \emptyset$. Therefore, we can set all the values in $z(\partial U \backslash S)$ to $+1$. Let's call that point $z'$. Finally, if we replace $z'(U) = +1$ by $+M$, we obtain a valid point $y$ from $z'$ such that $y \in [usv]_{\mathcal{I}_M}$.

Then, we conclude that $\mathcal{I}_M$ satisfies TSSM with gap $g = 3$, for every $M \geq 2$.
\end{proof}

\begin{rem}
In particular, Proposition \ref{iceberg} provides an alternative way of checking the well-known fact that $\mathcal{I}_M$ is strongly irreducible.
\end{rem}

\subsection{Arbitrarily large gap, arbitrarily high rate}

Now we will present a variation of the Iceberg model. Notice that the Iceberg model can be regarded as a shift space where two ``disjoint" full shifts coexist (positives and negatives) separated by a boundary of $\pm1$s. In the following, we present a family of shift spaces that try to extend the idea of full shifts coexisting from the two in the Iceberg model to an arbitrary number. First, we will see that this variation gives a family of $\Z^d$ n.n. SFTs satisfying TSSM with gap $g$ but not $g-1$, for arbitrary $g \in \N$. Second, we will prove that any of these models admits the existence of n.n. Gibbs measures supported on them and satisfying exponential SSM with arbitrarily high rate, showing in particular (as far as we know, for the first time) that there are systems that satisfy SSM and TSSM, without satisfying any of the other stronger combinatorial mixing properties, like having a safe symbol or satisfying SSF.

Given $g,d \in \N$, consider the alphabet $\Symb_g = \left\{0,1,\dots,g\right\}$ and the n.n. SFT defined by:
\begin{equation}
X_g^d := \left\{x \in \Symb_g^{\Z^d}: \left|x(p) - x(p+e_i)\right| \leq 1, \mbox{ for all } p \in \Z^d, i=1,\dots,d\right\}.
\end{equation}

Notice that $X_0^d = \left\{0^{\Z^d}\right\}$ (a fixed point) and $X_1^d = \Symb_1^{\Z^d}$ (a full shift), so both satisfy TSSM with gap $g=0$ and $g=1$, respectively. Also, notice that $1$ is a safe symbol for $X_2^d$. 

\begin{prop}
\label{andes1}
The n.n. SFT $X_g^d$ satisfies TSSM with gap $g$ but not $g-1$.

\begin{proof}
First, let's see that $X_g^d$ does not satisfy TSSM with gap $g-1$. In fact, recall that TSSM with gap $g-1$ implies strong irreducibility with the same gap. However, if we consider two configurations on single sites with values $0$ and $g$, respectively, they cannot appear in the same point if they are separated by a distance less or equal to $g-1$, since the values in consecutive sites can only increase or decrease by at most $1$. Therefore, $X_g^d$ is not TSSM with gap $g-1$.

Now, let's prove that $X_g^d$ satisfies TSSM with gap $g$. Consider Lemma \ref{lemSing}, $p,q \in \Z^d$ with $\dist(p,q) \geq g$, and $S \Subset \Z^d \backslash \{p,q\}$. Given $u \in \Symb_g^{\{p\}}$, $v \in \Symb_g^{\{q\}}$ and $s \in \Symb_g^{S}$, suppose that $[us]_{X_g^d},[sv]_{X_g^d} \neq \emptyset$. We want to prove that $[usv]_{X_g^d} \neq \emptyset$.

Since $[sv]_{X_g^d} \neq \emptyset$, we can consider a point $x \in [sv]_{X_g^d}$. If $x(p) = u$, we are done. W.l.o.g., suppose that $x(p) < u$ (the case $x(p) > u$ is analogous). We proceed by finding a valid point $x'$ such that $x'(S) = s$, $x'(q) = v$ and $x'(p) = x(p) + 1$. Iterating this process $u - x(p)$ times, we conclude. For doing this, notice that the only obstruction for increasing by $1$ the point $x$ at $p$ are the values of neighbours of $p$ strictly below $x(p)$. Considering this fact, we introduce a \emph{(directed) graph of descending paths} $\mathcal{D}(x,p) = (\mathcal{V}_g(x,p),\mathcal{E}_g(x,p))$, where $\mathcal{V}_0(x,p) =  \{p\}$, $\mathcal{E}_0(x,p) = \emptyset$ and, for $n \geq 1$:
\begin{align}
\mathcal{V}_{n+1}(x,p)	& = \mathcal{V}_n(x,p) \cup \bigcup_{\substack{r:~\dist(r, V_n(x,p)) = 1\\x(r) = x(p) - n}} \{r\},	\\
\mathcal{E}_{n+1}(x,p)	& = \mathcal{E}_n(x,p) \cup \bigcup_{\substack{r:~\dist(r, V_n(x,p)) = 1\\x(r) = x(p) - n}} \left\{(s,r):  s \in V_n(x,p), s \sim r\right\}.
\end{align}

Notice that, since $x(p) < g$, the recurrence stabilizes for some $n < g$, i.e. $\mathcal{V}_{n}(x,p) = \mathcal{V}_{g-1}(x,p)$ and $\mathcal{V}_{n}(x,p) = \mathcal{V}_{g-1}(x,p)$, for every $n \geq g$. In particular, the vertices that $\mathcal{D}(x,p)$ reaches are sites at distance at most $g-1$ from $p$, and the site $q$ cannot belong to the graph. Now, suppose that a site from $S$ belongs to $\mathcal{D}(x,p)$. If that is the case, the value at $p$ of any point in $[s]_{X_g^d}$ would be forced to be at most $x(p)$ (since the graph is strictly decreasing from $p$ to $S$), which contradicts the fact that $[us]_{X_g^d} \neq \emptyset$. 

Then, neither $q$ nor any element of $S$ belongs to $\mathcal{D}(x,p)$, so if we modify the values of $\mathcal{D}(x,p)$ in a valid way, we will still obtain a valid point $x'$ such that $x'(S) = s$ and $x'(q) = v$. Now, take the set $D =  \mathcal{V}(\mathcal{D}(x,p))$ and consider the point $x'$ such that:
\begin{align}
x'(D) = x(D) + 1,	&	\mbox{ and }	x'(\Z^d \backslash D) = x(\Z^d \backslash D).
\end{align}
where $x(D) + 1$ represents the configuration obtained from $x(D)$ after adding $1$ in every site. We claim that $x'$ is a valid point. To see this, we only need to check that the difference between values of vertices in an arbitrary edge is at most $1$. If both ends are in $D$ or in $\Z^d \backslash D$, it is clear that the edge is valid since the original point $x$ was a valid point, and adding $1$ to both ends does not affect the difference. If one end is in $r_1 \in D$ and the other one is in $r_2 \in \Z^d \backslash D$, then $x(r_1) \leq x(r_2)$, necessarily (if not, $x(r_1) > x(r_2)$, and $r_2$ would be part of the graph of descending paths). Since $\left|x(r_1) - x(r_2)\right| \leq 1$ and $x(r_1) \leq x(r_2)$, then $x(r_2) - x(r_1) \in \{0,1\}$. Therefore, $x'(r_1) - x'(r_2) = (x(r_1)+1) - x(r_2) = 1 - (x(r_2) - x(r_1)) \in \{1,0\}$, so $|x'(r_1) - x'(r_2)| \leq 1$. Then, we conclude that $x' \in X_g^d$ and $x'(S) = s$, $x'(q) = v$ and $x'(p) = x(p) + 1$, as we wanted.
\end{proof}
\end{prop}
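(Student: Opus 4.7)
The failure at gap $g-1$ is immediate from a strong irreducibility obstruction: since values at adjacent sites differ by at most $1$, a configuration with $0$ at one site and $g$ at another can exist in $X_g^d$ only if the two sites are separated by distance at least $g$. Placing these singletons at distance $g-1$ gives locally admissible configurations whose joint extension is empty, so $X_g^d$ is not strongly irreducible with gap $g-1$, and a fortiori not TSSM with gap $g-1$.

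For the positive direction, I would use Lemma \ref{lemSing} to reduce to singletons: assume $p,q \in \Z^d$ with $\dist(p,q) \geq g$ and $S \Subset \Z^d \setminus \{p,q\}$, pick $u \in \Symb_g^{\{p\}}$, $v \in \Symb_g^{\{q\}}$, $s \in \Symb_g^S$ with $[us]_{X_g^d}, [sv]_{X_g^d} \neq \emptyset$, and produce a point in $[usv]_{X_g^d}$. Starting from an arbitrary $x \in [sv]_{X_g^d}$, if $x(p) = u$ we are done; otherwise, assume WLOG $x(p) < u$ and plan to bump the value at $p$ up by $1$ at a time, iterating $u - x(p)$ times.

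The key object for a single bump is the set $D$ of sites reachable from $p$ by a strictly descending path in $x$, that is, the vertex set of the maximal descending tree rooted at $p$. Two observations drive the argument. First, $D$ has radius at most $x(p) < g$, because values decrease strictly along each path and lie in $\{0,\ldots,g\}$; hence $q \notin D$ since $\dist(p,q) \geq g$. Second, $D \cap S = \emptyset$: otherwise, a site $r \in S \cap D$ would be joined to $p$ by a strictly descending path in $x$, and then any $y \in [us]_{X_g^d}$ would have to satisfy $y(p) \leq x(r) + \dist(p,r) \leq x(p)$ by the slope-$1$ constraint propagated back along the path, contradicting $x(p) < u \leq y(p)$.

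Having isolated $D$ from $S \cup \{q\}$, I would define $x'$ by $x'(D) := x(D) + 1$ and $x' := x$ elsewhere, and verify the n.n. constraints edge by edge: edges inside $D$ or inside $D^c$ are unaffected, while for a boundary edge $\{r_1, r_2\}$ with $r_1 \in D$, $r_2 \notin D$, maximality of $D$ forces $x(r_2) \geq x(r_1)$, so $x(r_2) - x(r_1) \in \{0,1\}$ and hence $x'(r_1) - x'(r_2) \in \{0, 1\}$. Thus $x' \in X_g^d$, $x'(S) = s$, $x'(q) = v$, and $x'(p) = x(p)+1$. Iterating yields the desired point in $[usv]_{X_g^d}$, and the symmetric case $x(p) > u$ is handled by an ascending-paths construction. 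The main subtlety to be careful about is precisely the boundary-edge check and the argument that $D \cap S = \emptyset$; everything else is bookkeeping.
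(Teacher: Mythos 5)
Your proposal is correct and follows essentially the same route as the paper: reduction to singletons via Lemma \ref{lemSing}, the set of sites reachable from $p$ by strictly descending paths (the paper's graph $\mathcal{D}(x,p)$, whose vertex set is your $D$), the argument that $D$ avoids $q$ by the radius bound $x(p)<g$ and avoids $S$ by propagating the slope-$1$ constraint back to contradict $[us]_{X_g^d}\neq\emptyset$, the boundary-edge check after adding $1$ on $D$, and iteration. The negative direction via the $0$/$g$ strong-irreducibility obstruction is also the paper's argument, so there is nothing to add.
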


\begin{prop}
\label{andes2}
For any $g,d \in \N$, there exists a n.n. Gibbs measure on $X_g^d$ satisfying exponential SSM with rate $f(n) = Ce^{-\alpha n}$, for some $C,\alpha > 0$, where $\alpha$ can be chosen to be arbitrarily large.
\end{prop}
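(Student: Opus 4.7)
The plan is to construct, for each desired rate $\alpha > 0$, a shift-invariant n.n.\ Gibbs measure on $X_g^d$ via an interaction that strongly penalises non-zero heights, so that for $\beta$ sufficiently large the resulting measure is a small perturbation of the Dirac mass at the all-zero configuration and boundary influences can only propagate through rare excited clusters. Take the shift-invariant n.n.\ interaction $\Phi_\beta$ given by $\Phi_\beta(a) = \beta a$ on vertex-configurations $a \in \Symb_g$, and on edges by $\Phi_\beta(ab) = 0$ if $|a-b| \leq 1$ and $\Phi_\beta(ab) = \infty$ otherwise; then $\mathsf{X}(\Phi_\beta) = X_g^d$, a shift-invariant n.n.\ Gibbs measure $\mu_\beta$ exists by the standard thermodynamic-limit construction, and since $X_g^d$ satisfies the D-condition (take $S_n = \romb_n$ and $T_n = \romb_{n+g}$, using Proposition \ref{andes1} with $S = \emptyset$ to glue), Proposition \ref{dconsupp} gives $\supp(\mu_\beta) = X_g^d$.

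To verify SSM, I would reduce to the single-site case via Lemma \ref{ssmSing}. Fix $W \Subset \Z^d$, $q \in W$, $\delta_1,\delta_2 \in \Symb_g^{\partial W}$ of positive $\mu_\beta$-mass, $\Sigma = \Sigma_{\partial W}(\delta_1,\delta_2)$, and $n = \dist(q,\Sigma)$. The zero-separator principle is the following: if $S \subseteq W$ disconnects $q$ from $\Sigma$ in the graph $W \cup \partial W$ and $x|_S = 0$, then by the MRF property the conditional law of $x(q)$ given $\{x|_S = 0\}$ and $\delta$ depends only on the restriction of $\delta$ to the $q$-side of $S$, and in particular not on $\delta(\Sigma)$. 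Averaging over the event that such a zero-separator exists yields
\begin{equation*}
\bigl|\mu_\beta^{\delta_1}(x(q)=u) - \mu_\beta^{\delta_2}(x(q)=u)\bigr| \leq \mu_\beta^{\delta_1}(\mathcal{N}) + \mu_\beta^{\delta_2}(\mathcal{N}),
\end{equation*}
where $\mathcal{N}$ is the event that no zero-separator exists, equivalently that there is a self-avoiding lattice path of length $\geq n$ in $W \cup \partial W$ from a neighbour of $q$ to $\Sigma$ whose interior sites all satisfy $x \geq 1$.

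The heart of the proof is a per-path Peierls estimate $\mu_\beta^\delta(x \geq 1 \text{ on } \gamma) \leq (c(g,d)\,e^{-\beta})^{k}$ for every self-avoiding path $\gamma$ of length $k$. I would obtain it by a partition-function comparison: to each configuration $x$ with $x|_\gamma \geq 1$ associate $y = x - \mathbf{1}_{C(x,\gamma)}$, where $C(x,\gamma)$ is the connected component of $\{p : x(p) \geq 1\}$ containing $\gamma$. The outer collar of $C(x,\gamma)$ consists of zeros (otherwise those sites would join the component), so by the Lipschitz constraint the inner layer of $C(x,\gamma)$ has value exactly $1$; consequently $y \in X_g^d$ and $\sum_p y(p) \leq \sum_p x(p) - (k+1)$. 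The map $x \mapsto y$ is not injective (distinct clusters can yield the same image when $y$ is locally zero), but for each admissible $y$ the preimages are indexed by connected lattice animals containing $\gamma$, of which the number of size $m$ is at most $(c'd)^{m-k-1}$ by a standard enumeration; the $e^{-\beta|C|}$ weighting absorbs this multiplicity into the combinatorial constant $c(g,d)$. Summing the per-path bound over the at most $(2d)^k$ self-avoiding paths of length $k$ from a neighbour of $q$ to $\Sigma$ gives $\mu_\beta^\delta(\mathcal{N}) \leq C\bigl(2d\,c(g,d)\,e^{-\beta}\bigr)^n$, hence exponential SSM at rate $\beta - \log(2d\,c(g,d))$, which is unbounded as $\beta \to \infty$.

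The main obstacle I anticipate is the treatment of clusters $C(x,\gamma)$ that meet $\partial W$, since subtracting $1$ on $C(x,\gamma) \cap \partial W$ would alter the fixed boundary $\delta$. This is to be repaired by clipping the decrement to $C(x,\gamma) \cap W$ and absorbing the resulting Lipschitz defect along $C(x,\gamma) \cap \partial W$ into a collar of width at most $g$; it costs only an additional combinatorial factor in $g$ per path and does not affect the asymptotic rate in $\beta$, so the construction still delivers SSM at any prescribed rate $\alpha$ once $\beta$ is taken sufficiently large.
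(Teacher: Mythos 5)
There is a genuine gap, and it sits exactly at the point you flag as the ``main obstacle''. Your two key steps---the zero-separator principle and the per-path Peierls estimate $\mu_\beta^\delta(x \geq 1 \mbox{ on } \gamma) \leq (c(g,d)e^{-\beta})^{|\gamma|}$---are both false as stated, because the hard Lipschitz constraint makes the relevant ground state depend on $\delta$. If the boundary condition has a site with value $m \geq 2$, every site of $W$ within distance $k$ of it is deterministically forced to have value at least $m-k$; more generally, conditioned on $\delta$ the admissible configurations all lie above the minimal configuration $\psi_\delta(p) = \max\{0, \max_{r \in \partial W}(\delta(r) - \dist(p,r))\}$, which can be strictly positive on large portions of $W$ (e.g.\ $\delta \equiv g$ forces $x \geq 1$ on the whole $(g-1)$-collar of $\partial W$, and on all of $W$ if $W$ is a tube of width less than $2g$). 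On such regions the event $\{x \geq 1 \mbox{ on } \gamma\}$ has conditional probability $1$ for every $\beta$, no zero-separator can exist, and your deletion map $x \mapsto x - \mathbf{1}_{C(x,\gamma)}$ produces a configuration incompatible with $\delta$; clipping the decrement to $C(x,\gamma)\cap W$ and ``absorbing the defect into a collar'' cannot repair this, since for clusters pinned to high boundary values there is no $e^{-\beta}$ energy gain at all---this is not a combinatorial factor but a total loss of the estimate. (A secondary point: ``averaging over the event that a zero-separator exists'' is not itself an instance of the MRF property; one must decompose over the outermost separator, or equivalently explore the cluster of non-zero sites attached to $\Sigma$, to make that reduction rigorous.)

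The fix is to relativize every estimate to the boundary-dependent extremal configuration rather than to the all-zero point, and this is what the paper does (with the symmetric convention of rewarding high values via $\Phi(m) = -m\log\lambda$, so the extremal configuration is maximal). Concretely: Lemma \ref{lem1} constructs the maximal configuration $\theta_\delta$ compatible with $\delta$; Lemma \ref{lem2} uses TSSM (Proposition \ref{andes1}) to show $\theta_{\delta_1}$ and $\theta_{\delta_2}$ agree outside $\neig_g(\Sigma_{\partial W}(\delta_1,\delta_2))$, which is the correct replacement for your claim that influence must cross a zero-free region; Lemma \ref{lem3} gives the Peierls-type bound for the event that many sites of a path lie \emph{strictly below} $\theta_\delta$ (note its proof again invokes TSSM to localize the maximal configuration); and Theorem \ref{vandenberg} (disagreement percolation) replaces your separator averaging: along a path of disagreement each site has at least one of the two coupled configurations strictly below the common maximal configuration, so the path-counting argument closes and yields rate $\alpha = \tfrac12\log\lambda - \log(2d-1) - |\neig_g(0)|\log(g+1)$, arbitrarily large with $\lambda$. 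Your penalize-high-values version can be carried out in exactly the same way by working with the minimal configuration $\psi_\delta$ and bounding sites strictly \emph{above} it, but without that relativization (and without the TSSM input that makes the extremal configurations insensitive to $\Sigma$ at distance greater than $g$) the argument does not go through.
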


Before proving Proposition \ref{andes2}, we will provide some auxiliary results. From now on, fix $g,d \in \N$ and a shape $W \Subset \Z^d$. We consider the partial order $\preccurlyeq$ on $\Symb_g^W$ obtained by extending coordinate-wise the natural total order on $\Symb_g$ to $W$, i.e. $w \preccurlyeq w'$ if and only if $w(p) \leq w'(p)$, for all $p \in W$.

\begin{lem}
\label{lem1}
Given $\delta \in \Leng_{\partial W}(X_g^d)$, there is a unique configuration $\theta_\delta \in \Leng_{W}(X_g^d)$ such that $\theta_\delta \delta$ is globally admissible and $w \preccurlyeq \theta_\delta$, for any other configuration $w \in \Leng_{W}(X_g^d)$ such that $w\delta$ is locally admissible. We call $\theta_\delta$ the \emph{maximal configuration for $\delta$}.
\end{lem}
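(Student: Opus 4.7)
The plan is to define $\theta_\delta$ explicitly as a capped Lipschitz minimum and then verify the three required properties (global admissibility, maximality, uniqueness) in turn.

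First, I would set, for $p \in W$,
\[
\theta_\delta(p) \;:=\; \min\Bigl(g,\ \min_{q \in \partial W}\bigl(\delta(q) + \dist(p,q)\bigr)\Bigr),
\]
with the convention $\min \emptyset = +\infty$ (so $\theta_\delta \equiv g$ when $\partial W = \emptyset$). The intuition is the classical Lipschitz-envelope extension: this is the pointwise-largest $1$-Lipschitz candidate in $\Symb_g$ that is compatible with $\delta$. To show $\theta_\delta \in \Leng_W(X_g^d)$ and that $\theta_\delta\delta$ is globally admissible, I would use the very same formula to define a function $y : \Z^d \to \Symb_g$. Since $y$ is the cap at $g$ of a minimum of $1$-Lipschitz functions of $p$, it is itself $1$-Lipschitz, so $y \in X_g^d$. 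Using that $\delta$ extends to a point of $X_g^d$ and is therefore $1$-Lipschitz in the $\Z^d$-metric on $\partial W$, the inequality $\delta(q) + \dist(q_0,q) \geq \delta(q_0)$ (for $q,q_0 \in \partial W$) forces $y(q_0) = \delta(q_0)$; together with $y|_W = \theta_\delta$ this yields $y(W \cup \partial W) = \theta_\delta \delta$ as required.

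For the maximality step, let $w \in \Leng_W(X_g^d)$ be such that $w\delta$ is locally admissible, and fix $p \in W$ and $q \in \partial W$. I would choose a geodesic $p = p_0, p_1, \dots, p_n = q$ in $\Z^d$, with $n = \dist(p,q)$, and let $k$ be the last index with $p_k \in W$; since $q \notin W$, we have $k < n$, and $p_{k+1}$ is a neighbour of a $W$-site which does not lie in $W$, hence $p_{k+1} \in \partial W$. Combining (i) the $\Z^d$-Lipschitz property of $w$ on $W$ (supplied by $w \in \Leng_W(X_g^d)$), (ii) the single-edge local admissibility $|w(p_k) - \delta(p_{k+1})| \leq 1$, and (iii) the $\Z^d$-Lipschitz property of $\delta$ on $\partial W$ (supplied by $\delta \in \Leng_{\partial W}(X_g^d)$), I obtain
\[
w(p) \;\leq\; w(p_k) + k \;\leq\; \delta(p_{k+1}) + k + 1 \;\leq\; \delta(q) + (n-k-1) + k + 1 \;=\; \delta(q) + \dist(p,q).
\]
Since also $w(p) \leq g$, taking the minimum over $q \in \partial W$ yields $w(p) \leq \theta_\delta(p)$, i.e.\ $w \preccurlyeq \theta_\delta$. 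Uniqueness of $\theta_\delta$ is then immediate from maximality together with antisymmetry of $\preccurlyeq$.

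The main obstacle is the maximality step. What makes it work is that the hypotheses $w \in \Leng_W(X_g^d)$ and $\delta \in \Leng_{\partial W}(X_g^d)$ supply $1$-Lipschitz bounds in the ambient $\Z^d$-metric (not merely along induced-subgraph paths), which is crucial because a $\Z^d$-geodesic from $p \in W$ to $q \in \partial W$ may wander outside $W \cup \partial W$ (for example when $W$ is disconnected or has an irregular shape). With these ambient-metric Lipschitz bounds available on each side, a single local-admissibility edge $(p_k,p_{k+1})$ suffices to bridge from the ``$w$-side'' to the ``$\delta$-side'' along any geodesic, closing the estimate.
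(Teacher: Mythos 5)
Your proposal is correct, but it takes a genuinely different route from the paper's. The paper argues non-constructively: since $\Symb_g^W$ is finite, maximal admissible configurations exist, and any two maximal ones $\theta_1,\theta_2$ are merged by taking the site-wise maximum $\theta^*$, which is checked edge by edge to remain admissible together with $\delta$, contradicting incomparability; the only structural input is the lattice-closure property that coordinatewise maxima of admissible configurations are admissible. You instead construct $\theta_\delta$ explicitly as the capped Lipschitz envelope $\theta_\delta(p)=\min\bigl(g,\min_{q\in\partial W}(\delta(q)+\dist(p,q))\bigr)$, verify that the same formula defines a point $y\in X_g^d$ agreeing with $\delta$ on $\partial W$ (using that globally admissible configurations are $1$-Lipschitz in the ambient $\Z^d$ metric), and obtain maximality from the geodesic estimate that bridges from $w$ to $\delta$ across the single locally admissible edge $\{p_k,p_{k+1}\}$; uniqueness then follows from antisymmetry of $\preccurlyeq$. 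Your handling of the delicate point is right: since $W$ need not be connected, one cannot walk inside $W\cup\partial W$, and it is exactly the ambient-metric Lipschitz bounds supplied by global admissibility of $w$ and of $\delta$ that close the estimate. As for what each approach buys: the paper's max-closure argument is shorter and transfers verbatim to any system closed under site-wise maxima, but it only certifies existence and leans on the parenthetical claim that local admissibility yields global admissibility for $X_g^d$ (which, stated for arbitrary shapes, requires care); your construction yields an explicit formula for $\theta_\delta$ and avoids that issue entirely by exhibiting the extension point $y$ directly, at the cost of being tied to the specific height-function (Lipschitz) structure of $X_g^d$.
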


\begin{proof}
Given $\delta \in \Leng_{\partial W}(X_g^d)$, suppose that there exist two incomparable configurations $\theta_1, \theta_2 \in \Leng_{W}(X_g^d)$ such that $w \preccurlyeq \theta_j$ ($j =1,2$), for every $w \in \Leng_{W}(X_g^d)$ comparable with $\theta_j$ and such that $w\delta$ is locally admissible. Consider the configuration $\theta^* \in \Symb_g^W$ obtained by taking the site-wise maximum of $\theta_1$ and $\theta_2$. In other words, $\theta^*(p) := \max(\theta_1(p),\theta_2(p))$, for every $p \in W$. We claim that $\theta^*\delta \in \Leng_{W}(X_g^d)$. W.l.o.g., we can assume that there is a partition $W = W_1 \sqcup W_2$ such that $\theta^*(p) = \theta_j(p)$, for every $p \in W_j$ ($j = 1,2$). Take an arbitrary $p \in W \cup \partial W$ and $i \in \{1,\dots,d\}$. If $\{p,p+e_i\} \subseteq W_j$ for some $j$, then $|\theta^*(p)-\theta^*(p+e_i)| = |\theta_j(p)-\theta_j(p+e_i)| \leq 1$. If $p \in W_1$ and $p+e_i \in W_2$, then:
\begin{equation}
-1 \leq \theta_2(p)-\theta_2(p+e_i) \leq \theta^*(p)-\theta^*(p+e_i) \leq \theta_1(p)-\theta_1(p+e_i) \leq 1,
\end{equation}
so $|\theta^*(p)-\theta^*(p+e_i)| \leq 1$. If $p \in W_2$ and $p+e_i \in W_1$, the proof is analogous. Finally, if $p$ or $p+e_i$ is in $\partial W$, then we also have $|\theta^*\delta(p)-\theta^*\delta(p+e_i)| \leq 1$, because $\theta_1\delta$ and $\theta_2\delta$ are locally admissible. Then, $\theta^*\delta$ is locally admissible (and therefore, since $X_g^d$ is a n.n SFT,  globally admissible), $\theta_j \preccurlyeq \theta^*$ ($j = 1, 2$) and $\theta_j \neq \theta^*$, contradicting the maximality of $\theta_1$ and $\theta_2$. Therefore, since $\Symb_g^W$ is finite, there must exist one and only one maximal configuration $\theta_\delta$.
\end{proof}

\begin{lem}
\label{lem2}
Given $\delta_1,\delta_2 \in \Leng_{\partial W}(X_g^d)$, we have that:
\begin{equation}
\Sigma_W(\theta_{\delta_1},\theta_{\delta_2}) \subseteq \neig_g(\Sigma_{\partial W}(\delta_1,\delta_2)) \cap W.
\end{equation}
\end{lem}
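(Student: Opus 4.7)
The plan is to first derive an explicit formula for the maximal configuration $\theta_\delta$ of Lemma \ref{lem1}, and then read the claim off from that formula. Concretely, I claim that for every $\delta \in \Leng_{\partial W}(X_g^d)$ and $p \in W$,
$$\theta_\delta(p) \;=\; \min\!\left(g,\ \min_{q \in \partial W}\bigl(\delta(q) + \dist(p,q)\bigr)\right).$$
Granting this formula, the lemma follows by contrapositive: fix $p \in W$ with $\dist(p, \Sigma_{\partial W}(\delta_1,\delta_2)) > g$. For each $q \in \Sigma_{\partial W}(\delta_1,\delta_2)$ we have $\delta_i(q) + \dist(p,q) \geq 0 + (g+1) > g$ for $i=1,2$, so these terms are swallowed by the outer truncation at $g$ and may be dropped from the inner minimum. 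The remaining $q \in \partial W \setminus \Sigma_{\partial W}(\delta_1,\delta_2)$ satisfy $\delta_1(q) = \delta_2(q)$, so the two reduced expressions coincide, giving $\theta_{\delta_1}(p) = \theta_{\delta_2}(p)$. Hence $\Sigma_W(\theta_{\delta_1},\theta_{\delta_2}) \subseteq \neig_g(\Sigma_{\partial W}(\delta_1,\delta_2)) \cap W$.

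To prove the formula, denote its right-hand side by $\hat{\theta}_\delta(p)$ and establish the inequalities $\theta_\delta \preccurlyeq \hat{\theta}_\delta$ and $\hat{\theta}_\delta \preccurlyeq \theta_\delta$ separately. For the first, suppose $w \in \Symb_g^W$ makes $w\delta$ globally admissible and pick any extension $x \in [w\delta]_{X_g^d}$; the Lipschitz-$1$ constraint defining $X_g^d$ propagates along any path in $\Z^d$ to give $|x(p) - x(q)| \leq \dist(p,q)$ for all $p,q$, so $w(p) \leq \delta(q) + \dist(p,q)$ for every $q \in \partial W$, and $w(p) \leq g$ trivially. Thus $w \preccurlyeq \hat{\theta}_\delta$, and applying this to $w = \theta_\delta$ gives one direction.

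For the converse I exhibit a global extension. Define $y: \Z^d \to \Symb_g$ by the same formula $y(r) := \min(g, \min_{q \in \partial W}(\delta(q) + \dist(r,q)))$ for every $r \in \Z^d$. Verifying $y \in X_g^d$ reduces to three items: (a) $y$ takes values in $\Symb_g$, which is clear since $\delta \geq 0$ and the outer $\min$ caps at $g$; (b) $y$ agrees with $\delta$ on $\partial W$, which uses the Lipschitz-$1$ property $|\delta(p) - \delta(q)| \leq \dist(p,q)$ of $\delta$ (valid because $\delta$ is globally admissible), forcing $\delta(q)+\dist(p,q) \geq \delta(p)$ for every $p,q \in \partial W$ so the minimum is attained at $q = p$; and (c) $|y(r) - y(r')| \leq 1$ for adjacent $r,r'$, obtained by pushing $\dist(r,q) \leq \dist(r',q) + 1$ through the inner minimum and noting that capping at $g$ preserves this Lipschitz-$1$ bound. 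Therefore $\hat{\theta}_\delta\delta$ is globally admissible, and the maximality property of $\theta_\delta$ yields $\hat{\theta}_\delta \preccurlyeq \theta_\delta$.

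The main obstacle is the lower-bound half, where one must produce a concrete Lipschitz-$1$ extension to all of $\Z^d$; the upper bound and the deduction of the lemma from the formula are essentially one-line observations. Writing the formula and its verification first, and then stating the lemma as a direct corollary, seems cleaner than trying to argue the containment directly via paths between $p$ and the differing boundary sites.
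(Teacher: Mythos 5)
Your proof is correct, but it follows a genuinely different route from the paper. The paper proves the lemma abstractly from TSSM of $X_g^d$ (Proposition \ref{andes1}): assuming $\theta_{\delta_1}(p) > \theta_{\delta_2}(p)$ at a site $p$ far from $\Sigma_{\partial W}(\delta_1,\delta_2)$, it glues $u := \theta_{\delta_1}(p)$, the common boundary part $s$, and $v := \delta_2(\Sigma_{\partial W}(\delta_1,\delta_2))$ via TSSM to produce a point of $[\theta_{\delta_1}(p)\,\delta_2]_{X_g^d}$, contradicting the maximality of $\theta_{\delta_2}$. You instead identify the maximal configuration explicitly as the capped McShane-type Lipschitz extension $\theta_\delta(p) = \min\bigl(g, \min_{q\in\partial W}(\delta(q)+\dist(p,q))\bigr)$, and the locality statement drops out because boundary sites at distance greater than $g$ from $p$ contribute terms exceeding the cap. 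Your verification is complete: the upper bound uses the Lipschitz-$1$ propagation along $\Z^d$ geodesics for a global extension of $\theta_\delta\delta$; the lower bound exhibits the explicit point $y$, whose agreement with $\delta$ on $\partial W$ correctly invokes global admissibility of $\delta$ (so that $|\delta(p)-\delta(q)|\le\dist(p,q)$ in the unrestricted metric), and the cap at $g$ indeed preserves the Lipschitz bound; maximality in Lemma \ref{lem1} then gives $\hat\theta_\delta\preccurlyeq\theta_\delta$ since global admissibility implies local admissibility. What each approach buys: yours is self-contained and elementary, gives strictly more information (a closed formula for $\theta_\delta$, which would also yield Lemma \ref{lem1} directly), and does not rely on Proposition \ref{andes1}; the paper's argument is shorter given the machinery already in place and illustrates the intended theme that the locality of maximal configurations is a soft consequence of TSSM and maximality alone, a scheme that transfers to models where no explicit formula is available. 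Note also that the paper's version establishes the marginally stronger containment with $\dist(p,\Sigma)\ge g$ in the contradiction hypothesis, while yours proves exactly the stated inclusion; both suffice for the later use in Proposition \ref{andes2}.
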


\begin{proof}
 Consider the maximal configurations $\theta_{\delta_j}$ ($j = 1,2$) and suppose $\theta_{\delta_1}(p) \neq \theta_{\delta_2}(p)$, for some $p \in W$ such that $\dist(p,\Sigma_{\partial W}(\delta_1,\delta_2)) \geq g$. W.l.o.g., suppose that $\theta_{\delta_1}(p) > \theta_{\delta_2}(p)$. Considering $u := \theta_{\delta_1}(p)$, $s := \delta_1(\partial W \backslash \Sigma_{\partial W}(\delta_1,\delta_2)) = \delta_2(\partial W \backslash \Sigma_{\partial W}(\delta_1,\delta_2))$ and $v := \delta_2(\Sigma_{\partial W}(\delta_1,\delta_2))$, we have that $[us]_{X_g^d}, [sv]_{X_g^d} \neq \emptyset$, so $\emptyset \neq [usv]_{X_g^d} =  [\theta_{\delta_1}(p)\delta_2]_{X_g^d}$, due to the TSSM property. Take any $x \in [\theta_{\delta_1}(p)\delta_2]_{X_g^d}$ and consider $w = x(W)$. Then, $w \preccurlyeq \theta_{\delta_2}$, but $w(p) > \theta_{\delta_2}(p)$, which is a contradiction. Therefore, $\Sigma_W(\theta_{\delta_1},\theta_{\delta_2}) \subseteq \neig_g(\Sigma_{\partial W}(\delta_1,\delta_2)) \cap W$.
\end{proof}

We will use the following result.

\begin{thm}[{\cite[Theorem 1]{1-berg}}]
\label{vandenberg}
Let $\mu$ be an MRF. For every $W \Subset \Z^d$ and each pair $\delta_1,\delta_2 \in \Symb^{\partial W}$, there exists a coupling $((w_1(p),w_2(p)), p \in W)$ of $\mu^{\delta_1}$ and $\mu^{\delta_2}$ (whose distribution we denote by $\mathbb{P}$), such that for each $p \in W$, $w_1(p) \neq w_2(p)$ if and only if there is a path of disagreement (i.e. a path $\camino$ such that $w_1(q) \neq w_2(q)$, for all $q \in \camino$) from $p$ to $\Sigma_{\partial W}(\delta_1,\delta_2)$ ($\mathbb{P}$-a.s.).
\end{thm}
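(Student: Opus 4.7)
The plan is to construct the coupling via a sequential revelation procedure, exploiting the MRF property at each step. First, I would fix an enumeration $p_1,\dots,p_n$ of $W$; the exact order is unimportant for mere existence, but taking a breadth-first order emanating from $\Sigma_{\partial W}(\delta_1,\delta_2)$ makes the induction most transparent. Then I would build the joint law $\mathbb{P}$ of $((w_1(p),w_2(p)))_{p \in W}$ inductively. At step $k$, having already sampled $(w_1(p_j),w_2(p_j))$ for $j<k$, let $\nu_k^i$ denote the conditional distribution of $w_i(p_k)$ under $\mu^{\delta_i}$ given the revealed values at $p_1,\dots,p_{k-1}$, for $i=1,2$. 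Draw $(w_1(p_k),w_2(p_k))$ from the maximal (i.e.\ agreement-maximising) coupling of $\nu_k^1$ and $\nu_k^2$ on $\Symb^{\{p_k\}}\times\Symb^{\{p_k\}}$.

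The key observation is that by the MRF property, each $\nu_k^i$ depends only on the values at $\partial\{p_k\}$, which splits into the previously revealed sites in $W$ and the sites in $\partial W$ (where the values are fixed by $\delta_i$). Consequently, if every neighbour $q$ of $p_k$ satisfies either $q\in\partial W\setminus\Sigma_{\partial W}(\delta_1,\delta_2)$ or $q\in\{p_1,\dots,p_{k-1}\}$ with $w_1(q)=w_2(q)$, then $\nu_k^1=\nu_k^2$ and the maximal coupling forces $w_1(p_k)=w_2(p_k)$ almost surely. Contrapositively, disagreement at $p_k$ forces the existence of a neighbour that either lies in $\Sigma_{\partial W}(\delta_1,\delta_2)$ or is a previously revealed site carrying a disagreement.

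Iterating this contrapositive yields the nontrivial (``only if'') direction: by induction on $k$, any disagreement at $p_k$ traces back through a chain of earlier disagreements until it exits $W$ into $\Sigma_{\partial W}(\delta_1,\delta_2)$, and concatenating these sites produces the required path of disagreement from $p_k$ to $\Sigma_{\partial W}(\delta_1,\delta_2)$. The ``if'' direction is immediate from the definition of a disagreement path, since such a path by definition contains $p$ as a site of disagreement.

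The main obstacle will be verifying that the sequentially constructed joint distribution actually has $\mu^{\delta_1}$ and $\mu^{\delta_2}$ as its marginals. This is essentially automatic from the tower property (each step preserves the marginal laws because we are using the correct conditional distribution under each $\mu^{\delta_i}$ at step $k$), but it requires care to handle conditioning on events that could have measure zero; one standard fix is to restrict attention to configurations with positive $\mu^{\delta_i}$-measure and define the coupling arbitrarily off this (null) set. A secondary subtlety is that the BFS ordering must be chosen so that each newly processed site's ``revealed neighbourhood'' already contains enough information for the Markov reduction, but this is easy to ensure by processing sites in order of graph-distance from $\Sigma_{\partial W}(\delta_1,\delta_2)\cup\partial W$.
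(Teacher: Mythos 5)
The paper does not prove this theorem (it is quoted from \cite{1-berg}), so the only question is whether your construction works, and it does not. The gap is in your contrapositive. With a fixed enumeration $p_1,\dots,p_n$, at the moment you process $p_k$ some neighbours of $p_k$ inside $W$ are typically still unrevealed (this is unavoidable: whatever order you fix, $p_1$ is processed before all of its neighbours in $W$). The statement you actually prove is: if \emph{every} neighbour of $p_k$ lies in $\partial W\setminus\Sigma_{\partial W}(\delta_1,\delta_2)$ or is already revealed and agreeing, then $\nu_k^1=\nu_k^2$. Its contrapositive leaves three possibilities when $w_1(p_k)\neq w_2(p_k)$: a neighbour in $\Sigma_{\partial W}(\delta_1,\delta_2)$, a revealed disagreeing neighbour, \emph{or an unrevealed neighbour}. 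You silently drop the third case, and that is exactly where the argument collapses: when part of $\partial\{p_k\}$ is unrevealed, $\nu_k^1$ and $\nu_k^2$ are obtained by integrating out the unrevealed sites, so they can differ even though every revealed neighbour agrees, the discrepancy at $\Sigma_{\partial W}(\delta_1,\delta_2)$ being transmitted through the unrevealed region. Concretely, in the Ising model on a large box with $\Sigma_{\partial W}(\delta_1,\delta_2)$ a single boundary site, an early site $p_k$ of your order lying far from it has $\nu_k^1\neq\nu_k^2$, so the maximal coupling puts a disagreement there with positive probability, and with positive probability all neighbours of $p_k$ are later revealed to agree; the resulting configuration has an isolated disagreement with no path of disagreement to $\Sigma_{\partial W}(\delta_1,\delta_2)$. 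So the coupling you build genuinely fails the required property; this is not a null-set technicality, nor is it fixable by a cleverer deterministic ordering.

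What is missing is that the revelation order must be \emph{adaptive}, i.e. depend on the values already sampled; this is the heart of the proof in \cite{1-berg}. One clean formulation is induction on $|W|$: if $\delta_1=\delta_2$ on $\partial W$, the two conditional measures coincide and you take the identity coupling; otherwise pick $p\in W$ adjacent to $\Sigma_{\partial W}(\delta_1,\delta_2)$, couple the pair at $p$ (maximally or even arbitrarily), and, conditionally on the sampled values $(a_1,a_2)$, apply the inductive hypothesis to $W\setminus\{p\}$ with the boundary conditions $\delta_i a_i$ restricted to $\partial\left(W\setminus\{p\}\right)$, which is legitimate since $\partial\left(W\setminus\{p\}\right)\subseteq\partial W\cup\{p\}$ and the MRF property applies. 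A disagreement at $q$ then traces back either to $\Sigma_{\partial W}(\delta_1,\delta_2)$ directly or to $p$; in the latter case $a_1\neq a_2$ and $p$ is adjacent to $\Sigma_{\partial W}(\delta_1,\delta_2)$, so the disagreement path extends. Crucially, the moment the induced boundary conditions coincide, the construction switches to the identity coupling on the whole unexplored region, which is what rules out the spurious far-away disagreements your fixed-order scheme creates. Your remarks on marginal preservation via the chain rule and on handling null conditioning events are correct and carry over unchanged to this corrected construction.
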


Consider a parameter $\lambda > 0$ to be determined. Given configurations $m \in \Symb_g^{\{0\}}$ and $mn \in \Symb_g^{\{0,e_i\}}$, for an arbitrary $i = 1,\dots,d$, we define a n.n. interaction $\Phi$ on $\Z^d$ given by:
\begin{align}
\Phi(m)  = -m\log\lambda,	&	\mbox{ and }	\Phi(mn) = \begin{cases} 0 & \mbox{if } |m - n| \leq 1, \\ \infty & \mbox{otherwise.}\end{cases}
\end{align}

Clearly, $\mathsf{X}(\Phi) = X_g^d$. Now, fix $\mu$ any n.n. Gibbs measure for $\Phi$. Since $X_g^d$ satisfies the D-condition, by Proposition \ref{dconsupp} we have that $\supp(\mu) = X_g^d$.

\begin{lem}
\label{lem3}
Given $\delta \in \Leng_{\partial W}(X_g^d)$, a subset $U \subseteq W$ and $k \leq |U|$,
\begin{equation}
\mu^\delta\left(\left|\left\{ q \in U: w(q) < \theta_\delta(q)\right\}\right| \geq k\right) \leq (g+1)^{|U||\neig_g(0)|} \lambda^{-k}.
\end{equation}
\end{lem}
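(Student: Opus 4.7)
The plan is a Peierls-style injection argument. Writing $\mu^\delta(w) = \lambda^{\sum_{p \in W} w(p)}/Z^{\Phi,\delta}_W$, I would exploit the fact that configurations with larger total value carry exponentially more mass by associating to each $w$ with $|D(w)| \geq k$ a canonical companion $w^\star \succcurlyeq w$ with $\sum w^\star \geq \sum w + k$.

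More precisely, for each such $w$ I would first select (say lexicographically) a subset $T = \{q_1,\ldots,q_k\} \subseteq D(w)$ with $|T| = k$. I would then iteratively push $w$ upward along $T$: at the $i$-th step, pick some $q \in T$ with $w_{i-1}(q) < \theta_\delta(q)$ and increment $w_{i-1}$ by $1$ on every vertex of the descending graph $\mathcal{D}(w_{i-1},q)$ introduced in the proof of Proposition \ref{andes1}, stopping once $w_N(q) = \theta_\delta(q)$ for every $q \in T$. The main checks are: (a) $w_i \preccurlyeq \theta_\delta$ throughout, so each push-up is valid and never exceeds $g$; (b) no descending graph ``leaks'' into $\partial W$; and (c) the total region of change sits inside $\neig_{g-1}(T) \cap W \subseteq \neig_g(U) \cap W$, a set of size at most $|U|\cdot|\neig_g(0)|$. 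Step (b) is the delicate point: a vertex $r$ appearing in a descending graph at step $i$ satisfies $w_{i-1}(r) < \theta_\delta(r)$, while a strictly descending edge from $r$ to some $r' \in \partial W$ would, via $|\theta_\delta(r)-\delta(r')| \leq 1$, force $\theta_\delta(r) = \delta(r')+1 = w_{i-1}(r)$, contradicting (a).

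Setting $w^\star := w_N$ and $\Delta := w^\star - w$, the map $w \mapsto (w^\star, \Delta)$ is injective because $w = w^\star - \Delta$. Estimating each weighted term and over-counting the image,
\begin{align*}
\mu^\delta\bigl(|D(w)| \geq k\bigr) \leq \lambda^{-k}\sum_{w\,:\,|D(w)|\geq k} \frac{\lambda^{\sum w^\star}}{Z^{\Phi,\delta}_W} &\leq \lambda^{-k}\sum_{w^\star \text{ valid}}\sum_{\Delta} \frac{\lambda^{\sum w^\star}}{Z^{\Phi,\delta}_W} \\ &= \lambda^{-k}(g+1)^{|\neig_g(U)\cap W|} \leq (g+1)^{|U|\,|\neig_g(0)|}\lambda^{-k},
\end{align*}
the second inequality enlarging $\Delta$ to range over all functions $\neig_g(U)\cap W \to \{0,\ldots,g\}$.

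The main obstacle I anticipate is carrying out (b) cleanly across the iteration: verifying that descending graphs never touch $\partial W$ with a strictly descending edge requires repeatedly invoking the maximality of $\theta_\delta$ and the $1$-Lipschitz structure of $X_g^d$ at each intermediate $w_i$. Once this is in hand, the sum bound $\sum w^\star - \sum w \geq k$ (at least one unit per $q \in T$, which by construction must have been pushed to $\theta_\delta(q)$) together with the cardinality bound on $\neig_g(U)$ delivers the claimed estimate.
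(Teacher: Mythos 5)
Your argument is correct, but it follows a genuinely different route from the paper. The paper localizes to $T = W \cap \neig_g(U)$, conditions on $w(W\backslash T)$, forms the induced boundary $\eta = w(V)\delta(S)$, and uses TSSM (Proposition \ref{andes1}) together with maximality to show $\theta_\eta(U) = \theta_\delta(U)$; the Gibbs ratio $\mu^\eta(\theta_\eta)/\mu^\eta(w(T)) \geq \lambda^k$ then gives $\mu^\delta(w(T)) \leq \lambda^{-k}$, and the entropy factor comes from $\left|\Leng_T(X_g^d)\right| \leq (g+1)^{|U||\neig_g(0)|}$. You instead run a direct Peierls injection inside $W$ with the fixed boundary $\delta$, re-using the descending-graph surgery from the proof of Proposition \ref{andes1} rather than the TSSM statement it yields: your delicate step (b) does go through, since along a strictly descending path from the root $q$ the gap $\theta_\delta - w$ is non-decreasing (the $1$-Lipschitz property of $\theta_\delta\delta$ versus the exact decrease of $w$), so every graph vertex $r \in W$ satisfies $w_{i-1}(r) < \theta_\delta(r)$, and a descending edge into $r' \in \partial W$ would force $\theta_\delta(r) \leq \delta(r') + 1 = w_{i-1}(r)$, a contradiction; this simultaneously gives your (a), confines the graph to $W$, keeps each increment locally admissible, and bounds the modified region by $\neig_{g-1}(T) \cap W$. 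What the paper's route buys is brevity and staying entirely in conditional-probability language, since TSSM and Lemma \ref{lem1} are already in hand and the only new idea is $\theta_\eta(U)=\theta_\delta(U)$; what your route buys is self-containedness (no appeal to TSSM, only to the existence of $\theta_\delta$ and to the monotone surgery), with the entropy factor counted over increment fields $\Delta$ on $\neig_g(U)\cap W$ instead of configurations on $T$ — the same $(g+1)^{|U||\neig_g(0)|}$ either way. Two cosmetic points: your normalization $\mu^\delta(w) = \lambda^{\sum w}/Z^{\Phi,\delta}_W$ drops constant $\delta$-terms (harmless, since only ratios and the normalization $\sum_{w^\star}\mu^\delta(w^\star)=1$ are used, and the middle equality in your display should really be a $\leq$ unless you sum over all admissible $w^\star$), and you should fix a deterministic rule for which $q \in T$ is pushed at each step so that $w \mapsto (w^\star,\Delta)$ is a well-defined map, though injectivity is automatic from $w = w^\star - \Delta$ in any case.
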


\begin{proof}
Consider an arbitrary configuration $w \in \Symb_g^{W}$ such that $w\delta$ is locally admissible. Notice that the boundary $\partial T$ of the set $T := W \cap \neig_g(U)$ can be decomposed into two subsets, namely $V := W \cap \partial \neig_g(U)$ and $S := \partial W \cap \partial \neig_g(U)$. Then, we can consider the boundary configuration $\eta := w(V)\delta(S) \in \Symb_g^{\partial T}$ and the corresponding maximal configurations $\theta_\delta \in \Symb_g^{W}$ and $\theta_\eta \in \Symb_g^{T}$, given by Lemma \ref{lem1}.

\begin{figure}[ht]
\centering
\includegraphics[scale = 0.7]{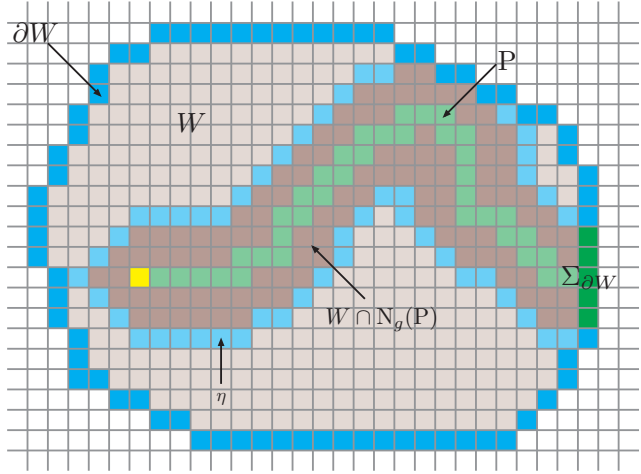}
\caption{Decomposition in the proof of Lemma \ref{lem3}, used later in the proof of Proposition \ref{andes2}.}
\label{andes}
\end{figure}

Notice that $\theta_\delta(U)\delta(S)$ and $\delta(S)w(V)$ are globally admissible, and $\dist(U,V) \geq g$. Then, by TSSM, $\theta_\delta(U)\delta(S)w(V) = \theta_\delta(U)\eta$ is globally admissible, too. By maximality of $\theta_\eta$, we have that $\theta_\eta(q) \geq \theta_\delta(q)$, for all $q \in U$. Similarly, since $w(W \backslash T)\theta_\eta\delta$ is locally admissible, we have that $\theta_\eta(q) \leq \theta_\delta(q)$, for all $q \in U$. Therefore, $\theta_\eta(U) = \theta_\delta(U)$.

Now, suppose that $w$ is such that $\left|\left\{ q \in U: w(q) < \theta_\delta(q)\right\}\right| \geq k$, for some $k \leq |U|$. Then, by the MRF property:
\begin{equation}
\frac{\mu^\delta\left(\theta_\eta \middle\vert w(W \backslash T))\right)}{\mu^\delta\left(w(T) \middle\vert w(W \backslash T)\right)} = \frac{\mu^\eta\left(\theta_\eta\right)}{\mu^\eta\left(w(T)\right)} \geq \lambda^{k}.
\end{equation}

Therefore,
\begin{equation}
\mu^\delta\left(w(T) \middle\vert w(W \backslash T)\right) \leq \lambda^{-k}.
\end{equation}

Next, by integrating over all configurations $v \in \Leng_{W \backslash T}(X_g^d)$ such that $vw(T)\delta$ is locally admissible, we have that:
\begin{equation}
\mu^\delta(w(T)) \leq \lambda^{-k}.
\end{equation}

Notice that $|T| \leq \left|\neig_g(U)\right| \leq |U||\neig_g(0)|$. In particular, $\left|\Leng_{T}\left(X_g^d\right)\right| \leq (g+1)^{|U||\neig_g(0)|}$. Then, since $w$ was arbitrary:
\begin{align}
\mu^\delta\left(\left|\left\{ q \in U: w(q) < \theta_\delta(q)\right\}\right| \geq k\right)		&	\leq \sum_{\substack{u \in \Leng_T(X_g^d):\\\left|\left\{ q \in U: u(q) < \theta_\delta(q)\right\}\right| \geq k}}\mu^\delta(u)	\\
																&	\leq (g+1)^{|U||\neig_g(0)|} \lambda^{-k}.
\end{align}
\end{proof}

Now we are in good shape for finishing the proof of Proposition \ref{andes2}.

\begin{proof}[Proof (of Proposition \ref{andes2})]
Take $p \in W$, $u \in \Symb_g^{\{p\}}$ and $\delta_1,\delta_2 \in \Leng_{\partial W}(X_g^d)$. W.l.o.g., suppose that $\dist\left(p,\Sigma_{\partial W}(\delta_1,\delta_2)\right) = n > g$. By Theorem \ref{vandenberg}, we have that:
\begin{align}
\left|\mu^{\delta_1}(u) - \mu^{\delta_2}(v)\right|	&	=	\left|\mathbb{P}\left(w_1(p) = u\right) - \mathbb{P}\left(w_2(p) = u\right)\right|	\\
									&	\leq	\mathbb{P}(w_1(p) \neq w_2(p))	\\
									&	=	\mathbb{P}\left(\exists \mbox{ path of disagr. from } p \mbox{ to } \Sigma_{\partial W}(\delta_1,\delta_2) \right)	\\
									&	\leq	\mathbb{P}\left(\exists \mbox{ path of disagr. from } p \mbox{ to } \neig_g(\Sigma_{\partial W}(\delta_1,\delta_2)) \right).
\end{align}

When considering a path of disagreement $\camino$ from $p$ to $\neig_g(\Sigma_{\partial W}(\delta_1,\delta_2))$, we can assume that $\camino \subseteq W \backslash \neig_g(\Sigma_{\partial W}(\delta_1,\delta_2))$ and $|\camino| \geq n-g$. By Lemma \ref{lem2}, we have that $\theta_{\delta_1}(\camino) = \theta_{\delta_2}(\camino) =: \theta \in \Leng_{\camino}(X_g^d)$. Since $\camino$ is a path of disagreement, $w_1(q) < w_2(q) \leq \theta(q)$ or $w_2(q) < w_1(q) \leq \theta(q)$, for every $q \in \camino$. In consequence, and using Lemma \ref{lem3},
\begin{align}
	&	\mathbb{P}\left(\exists \mbox{ path } \camino \mbox{ of disagr. from } p \mbox{ to } \neig_g(\Sigma_{\partial W}(\delta_1,\delta_2)) \right) \\
\leq	&~	\sum_{k = n-g}^\infty \sum_{|\camino| = k} \mathbb{P}\left(\camino \mbox{ is a path of disagr. from } p \mbox{ to } \neig_g(\Sigma_{\partial W}(\delta_1,\delta_2)) \right)	\\
\leq	&~	\sum_{k = n-g}^\infty \sum_{|\camino| = k} \mu^{\delta_1}\left(\left|\left\{ q \in \camino: w_1(q) < \theta(q)\right\}\right| \geq \frac{k}{2}\right) \\
	&~	+ \sum_{k = n-g}^\infty \sum_{|\camino| = k} \mu^{\delta_2}\left(\left|\left\{ q \in \camino: w_2(q) < \theta(q)\right\}\right| \geq \frac{k}{2}\right)	\nonumber\\
\leq	&~	2\sum_{k = n-g}^\infty \sum_{|\camino| = k} (g+1)^{k|\neig_g(0)|}\lambda^{-\frac{k}{2}}	\\
\leq	&~	2\sum_{k = n-g}^\infty 2d(2d-1)^k \left(\frac{(g+1)^{|\neig_g(0)|}}{\lambda^{1/2}}\right)^k 	\\	
=	&~	4d\sum_{k = n-g}^\infty \left(\frac{(2d-1)(g+1)^{|\neig_g(0)|}}{\lambda^{1/2}}\right)^k =	 Ce^{-\alpha n} ,
\end{align}
if $\beta := \frac{(2d-1)(g+1)^{|\neig_g(0)|}}{\lambda^{1/2}} < 1$, $\alpha := -\log \beta = \frac{1}{2}\log\lambda - \log(2d-1) - |\neig_g(0)|\log(g+1)$ and $C := \left(\frac{4d}{\beta^g(1-\beta)}\right)$. Notice that $|\neig_g(0)| \leq (2g+1)^d$. Then, it suffices to take:
\begin{equation}
\lambda	>	(2d-1)^2(g+1)^{(2g+1)^{2d}}.
\end{equation}

Finally, by Lemma \ref{ssmSing}, we conclude the (exponential) SSM property.
\end{proof}

The preceding proof is based on the modification of an approach used in \cite{1-brightwell} for proving uniqueness of Gibbs measures with constraints defined in terms of \emph{dismantlable graphs}. Here we use the coupling from Theorem \ref{vandenberg} (see \cite[Theorem 1]{1-berg}), which is different from the coupling used in \cite{1-brightwell} (see \cite[Theorem 1]{2-berg}). It is very likely that the bounds can be improved (using self avoiding paths, etc.). W.l.o.g., we could have also assumed that $\Sigma_{\partial W}(\delta_1,\delta_2) \subseteq \neig_g(p)$, for some $p \in \partial W$, thanks to Corollary \ref{boundary}. Also, notice that Proposition \ref{andes2} gives us an alternative way to prove TSSM for $X_g^2$, since the rate of decay can be arbitrarily large by adjusting $\lambda$ (in particular, larger than $4\log(g+1)$) and Theorem \ref{rate} applies.

\begin{note}
Proposition \ref{andes2} can be easily adapted to the hard-core model case (notice that the hard-core model is like $X_2^d$ but with $11$ forbidden, and this is not a problem for using the same arguments of the proof given here).
\end{note}

\section{Pressure representation}
\label{PressureRep}

When dealing with pressure representation, it is useful to consider an order in the lattice. A natural one is the so-called \emph{lexicographic order} on $\Z^d$, where $q \prec p$ if and only if $q \neq p$ and, for the smallest $i$ for which $q_i \neq p_i$, $q_i$ is strictly smaller than $p_i$. Considering $\prec$, we define the \emph{lexicographic past $\past$} of $\Z^d$ as the set $\past := \left\{p \in \Z^d: p \prec 0\right\}$ . Given $n \in \N$, we also define the set $\past_n := \past \cap \block_n$.

Given a shift-invariant measure $\mu$ on $\Symb^{\Z^d}$, we define $p_{\mu,n}(x) := p_{\mu,\past_n}(x)$ (recall Definition \ref{Pmu}). By martingale convergence, we can define $p_\mu(x) := \lim_{n \rightarrow \infty} p_{\mu,n}(x)$, that exists for $\mu$-a.e. $x \in \supp(\mu)$. Then, the \emph{information function $I_\mu$} is $\mu$-a.e. defined as:
\begin{equation}
I_\mu(x) := -\log p_\mu(x).
\end{equation}

It is known (see \cite[Theorem 15.12]{1-georgii} or \cite[Theorem 2.4]{1-krengel}) that the measure-theoretic entropy of $\mu$ can be expressed as:
\begin{equation}
\label{entropyPast}
h(\mu) = \int{I_\mu}d\mu.
\end{equation}

When applied to an equilibrium state $\mu$ for a function $f$, Equation (\ref{entropyPast}) clearly implies that:
\begin{equation}
P(f) = \int{\left(I_\mu + f\right)}d\mu.
\end{equation}

For certain classes of equilibrium states and Gibbs measures, sometimes there are even simpler representations for the pressure. A recent example of this was given by D. Gamarnik and D. Katz in \cite[Theorem 1]{1-gamarnik}, who showed that for any n.n. Gibbs measure $\mu$ for a n.n. interaction $\Phi$ which has the SSM property and such that $\mathsf{X}(\Phi)$ contains a safe symbol $0$:
\begin{equation}
P(\Phi) = I_\mu\left(0^{\Z^d}\right) + A_\Phi\left(0^{\Z^d}\right).
\end{equation}

Here, $0^{\Z^d} \in \Symb^{\Z^d}$ is the configuration on $\Z^d$ which is $0$ at every site of $\Z^d$. Notice that:
\begin{equation}
I_\mu\left(0^{\Z^d}\right) + A_\Phi\left(0^{\Z^d}\right) = \int{\left(I_\mu + A_\Phi\right)}d\nu_0,
\end{equation}
where $\nu_0$ is the measure supported on the fixed point $0^{\Z^d}$. They used this simple representation to give a polynomial time approximation algorithm for $P(\Phi)$ in certain cases (the hard-core model, in particular). Later, B. Marcus and R. Pavlov \cite{1-marcus} weakened the hypothesis and extended their results for pressure representation, obtaining the following corollary.

\begin{cor}[\cite{1-marcus}]
\label{corRep}
Let $\Phi$ be a n.n. interaction, $\mu$ a Gibbs measure for $\Phi$, and $\nu$ a shift-invariant measure with $\supp(\nu) \subseteq \mathsf{X}(\Phi)$ such that:
\begin{itemize}
\item $\mathsf{X}(\Phi)$ satisfies SSF, and
\item $\mu$ satisfies SSM.
\end{itemize}

Then, $P(\Phi) = \int{\left(I_\mu + A_\Phi\right)}d\nu$.
\end{cor}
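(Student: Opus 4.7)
The plan is to extract the key consequences of SSF + SSM using the machinery of the paper, then prove the integral identity via a block-averaging argument that compares the Gibbs-specification expression for the finite partition function with a telescoping sum of information-function terms.

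First, by Proposition \ref{ssftssm}, SSF on $\mathsf{X}(\Phi)$ implies TSSM with gap $g=2$, and TSSM implies strong irreducibility (set $S = \emptyset$ in the definition), hence the D-condition; Proposition \ref{dconsupp} then gives $\supp(\mu) = \mathsf{X}(\Phi)$, so $\nu$ may be any shift-invariant measure on $\supp(\mu)$. Applying Corollary \ref{SSMcoroll} to $\mu$ (which satisfies SSM and whose support satisfies TSSM) yields $\Cmu > 0$. Combining the uniform lower bound $p_{\mu,n}(x) \geq \Cmu > 0$ (so that $-\log$ is Lipschitz on the relevant range) with the SSM rate $f(n)$ of $\mu$, the truncated information functions $I_{\mu,n}(x) := -\log p_{\mu,n}(x)$ form a uniformly Cauchy sequence on $\mathsf{X}(\Phi)$: two points agreeing on a given past window have past-conditional probabilities differing by $O(f(n))$, which translates to a uniform Cauchy bound on $\{I_{\mu,n}\}$. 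Hence $I_\mu$ extends from its $\mu$-a.e. definition to a continuous function on all of $\mathsf{X}(\Phi)$, so $I_\mu + A_\Phi \in C(\mathsf{X}(\Phi))$ and the right-hand side of the claimed identity is well-defined for every $\nu$ under consideration.

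To establish $\int (I_\mu + A_\Phi)\, d\nu = P(\Phi)$ I would run a block-averaging argument along a lexicographic enumeration of $\block_n$. For $\delta \in \Symb^{\partial \block_n}$ admissible and $w \in \Symb^{\block_n}$ with $\Lambda_{\block_n}^{\delta}(w) > 0$, the chain rule expands $-\log \Lambda_{\block_n}^{\delta}(w)$ as a sum over $p \in \block_n$ of single-site conditional log-probabilities along the lex past, whereas the Gibbs specification gives $-\log \Lambda_{\block_n}^{\delta}(w) = U_{\block_n}^{\Phi}(w\delta) + \log Z_{\block_n}^{\Phi,\delta}$. Integrating both sides against $\nu$, shift-invariance collapses the telescoping sum into $|\block_n|$ copies of a single integral, and uniform convergence $I_{\mu,n} \to I_\mu$ replaces truncated conditional information terms by $I_\mu$; the energy term contributes $|\block_n|\int A_\Phi\, d\nu$ up to boundary. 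Dividing by $|\block_n|$ and letting $n \to \infty$ yields $\int (I_\mu + A_\Phi)\, d\nu$ on one side and $P(\Phi) = \lim_n |\block_n|^{-1}\log Z_{\block_n}^{\Phi}$ on the other, independently of the choice of $\nu$.

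The main obstacle is the quantitative control of two error sources: the discrepancy between $I_\mu(\sigma_p x)$ and the truncated quantity $I_{\mu,n-n(p)}(\sigma_p x)$ for sites $p$ close to $\partial \block_n$, and the boundary-energy contribution from edges crossing $\partial \block_n$. Both involve at most $O(n^{d-1})$ terms, each bounded uniformly in the configuration (thanks to $\Cmu > 0$ making $I_\mu$ bounded and to finiteness of $\Phi$ on locally admissible edges), hence $o(|\block_n|)$ after normalization. Pushing this quantitative control through the argument and verifying that the passage to the limit is uniform in the choice of shift-invariant $\nu$ on $\mathsf{X}(\Phi)$ is the heart of the proof; everything else reduces to the structural facts already established in Sections \ref{tssmproperties} and \ref{sec6}.
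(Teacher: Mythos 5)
Your first paragraph is sound and matches the paper's machinery: SSF implies TSSM (Proposition \ref{ssftssm}), hence strong irreducibility and the D-condition, so $\supp(\mu)=\mathsf{X}(\Phi)$ by Proposition \ref{dconsupp}, and $\Cmu>0$ by Proposition \ref{TSSM-Cmu}. Note, however, that these reductions already give a much shorter finish than the one you attempt: under the D-condition a n.n.\ Gibbs measure for $\Phi$ is an equilibrium state for $A_\Phi$ (remark at the end of Section \ref{sec2}), so Corollary \ref{TSSMRep} (equivalently Theorem \ref{thmA}, with (A2) supplied by SSM as in \cite{1-marcus}) applies verbatim and yields $P(\Phi)=\int(I_\mu+A_\Phi)\,d\nu$. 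That is how the statement sits in the paper, which quotes it from \cite{1-marcus}; the engine behind it is exactly the technical theorem you are implicitly re-proving.

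Your from-scratch block-averaging argument has two genuine gaps, located precisely where that engine does its work. First, the Gibbs identity gives you $\frac{1}{|\block_n|}\int\log Z^{\Phi,\,x(\partial\block_n)}_{\block_n}\,d\nu(x)$, not $\frac{1}{|\block_n|}\log Z^{\Phi}_{\block_n}$, and you pass from one to the other with no justification. The discrepancy is not a bounded boundary-energy term: the hard constraints imposed by $x(\partial\block_n)$ can annihilate configurations, so a priori $Z^{\Phi,\delta}_{\block_n}$ can be far smaller than $Z^{\Phi}_{\block_n}$. One must use SSF/strong irreducibility (fill an annulus compatibly with $\delta$ and with any admissible configuration on a slightly smaller block, using the finite bounds on $\Phi$) to show the normalized log-partition functions agree up to $O(n^{d-1})/|\block_n|$ uniformly in admissible $\delta$; this is the analogue of Equations (\ref{eq7-2})--(\ref{eq7-3}) in the proof of Theorem \ref{thmA}, and without some such mixing hypothesis the claimed limit ``independently of $\nu$'' is simply false. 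Second, the chain-rule terms are $\mu\bigl(x(p)\,\big\vert\,x(\mbox{lex past of }p\mbox{ in }\block_n)\,x(\partial\block_n)\bigr)$: the conditioning set contains the whole boundary of the block, part of which lies lexicographically in the future of $p$. Your uniform Cauchy statement concerns $p_{\mu,S}$ only for $S\Subset\past$, so it does not license replacing these terms by $I_\mu(\sigma_p x)$; for interior sites you need an additional SSM argument (using $\Cmu>0$) showing that conditioning on non-past sites at distance at least $k$ changes the conditional probability by $o(1)$ uniformly as $k\to\infty$. The sites near $\partial\block_n$ are indeed harmless as you say (at most $O(n^{d-1})$ terms, each bounded by $-\log\Cmu$), but the partition-function comparison and the future-boundary conditioning are the heart of the proof and are currently asserted rather than proved.
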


Corollary \ref{corRep} relied on a more technical theorem from \cite[Theorem 3.1]{1-marcus}. Here we extend that result from fully supported Gibbs measures to (not necessarily fully supported) equilibrium states, something also necessary for our purposes (for example, see Corollary \ref{HighRep}). First, a couple of definitions.

We define $\lim_{S \rightarrow \past} p_{\mu,S}(x)$ to mean that there exists $L \in \R$ such that for any $\epsilon > 0$, there is $n \in \N$ such that for all $\past_n \subseteq S \Subset \past$, $|p_{\mu,S}(x) - L| < \epsilon$. Given $x$, by definition $L = p_{\mu}(x)$, if such $L$ exists. In addition, for shift-invariant measures $\mu$ and $\nu$ on $\Symb^{\Z^d}$, with $\supp(\nu) \subseteq \supp(\mu)$, we define:
\begin{equation}
\Cmu^{-}(\nu) := \inf\left\{p_{\mu,S}(x): x \in \supp(\nu), S \Subset \past \right\}.
\end{equation}

Notice that $\Cmu^{-}(\nu) \geq \Cmu$. We have the following theorem.

\begin{thm}
\label{thmA}
Let $\Phi$ be a n.n. interaction, $\mu$ an equilibrium state for $\Phi$, and $\nu$ a shift-invariant measure with $\supp(\nu) \subseteq \supp(\mu)$ such that:
\begin{itemize}
\item[(A1)] $\supp(\mu)$ satisfies the D-condition,
\item[(A2)] $\lim_{S \rightarrow \past} p_{\mu,S}(x) = p_{\mu}(x)$ uniformly over $x \in \supp(\nu)$, and
\item[(A3)] $\Cmu^{-}(\nu) > 0$.
\end{itemize}

Then, $P(\Phi) = \int{\left(I_\mu + A_\Phi\right)}d\nu$.
\end{thm}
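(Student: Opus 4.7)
The plan is to exploit the Gibbs structure of $\mu$: since $\mu$ is an equilibrium state for $\Phi$ and $\supp(\mu)$ satisfies the D-condition by (A1), the converse to \cite[Theorem 3]{1-ruelle} noted after Proposition \ref{dconsupp} guarantees that $\mu$ is a nearest-neighbour Gibbs measure for $\Phi$. In particular, for every finite $S \Subset \Z^d$ and $\nu$-a.e.\ $x$,
\begin{equation*}
-\log \mu(x(S) \mid x(\partial S)) \;=\; U_S^\Phi(x) + \log Z_S^{\Phi,\, x(\partial S)}.
\end{equation*}
I would fix a van Hove sequence $\{S_n\}$ (for instance $S_n = \block_n$), integrate this identity against $\nu$, and divide by $|S_n|$.

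On the right-hand side, $\frac{1}{|S_n|}\int U_{S_n}^\Phi\, d\nu \to -\int A_\Phi\, d\nu$ by shift-invariance of $\nu$ and the standard boundary-to-volume estimate for van Hove sequences, while $\frac{1}{|S_n|}\int \log Z_{S_n}^{\Phi, x(\partial S_n)}\, d\nu \to P(\Phi)$ via the classical uniform convergence $\frac{1}{|S_n|}\log Z_{S_n}^{\Phi, \delta} \to P(\Phi)$ over admissible boundaries $\delta$, a consequence of (A1). On the left-hand side, the chain rule in lexicographic order on $S_n$ gives
\begin{equation*}
-\log \mu(x(S_n) \mid x(\partial S_n)) \;=\; \sum_{p \in S_n} -\log \mu\bigl(x(p) \,\big|\, x(\partial S_n \cup A_p)\bigr),\quad A_p := \{q \in S_n : q \prec p\},
\end{equation*}
and the goal is to identify the $|S_n|$-averaged integral with $\int I_\mu\, d\nu$. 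For $p$ deep inside $S_n$ the translate $(A_p - p) \cup (\partial S_n - p)$ decomposes into a large past component $A_p - p \subseteq \past$ (engulfing any fixed $\past_N$ once $p$ is sufficiently deep) and a non-past component $\partial S_n - p$ lying far from the origin. Using (A2) on the past part together with the MRF property of $\mu$ to absorb the distant non-past conditioning, one obtains a uniform approximation $-\log \mu(x(p) \mid x(\partial S_n \cup A_p)) \approx I_\mu(\sigma_p x)$ for deep $p$; assumption (A3), $\Cmu^{-}(\nu) > 0$, is precisely what converts $L^\infty$-closeness of conditional probabilities into $L^\infty$-closeness of their logarithms, and simultaneously bounds the contribution of the van Hove-negligible boundary sites by $-\log \Cmu^{-}(\nu) < \infty$ per site. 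Shift-invariance of $\nu$ then yields $\int I_\mu\, d\nu$ in the limit, and combining the two sides gives $P(\Phi) = \int(I_\mu + A_\Phi)\, d\nu$.

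The main obstacle is this last approximation step. Since no global spatial mixing for $\mu$ is assumed, the non-past portion $\partial S_n - p$ of the conditioning cannot be absorbed via direct correlation decay. Instead, one must iteratively apply the MRF property to marginalize over the ``future-of-$p$'' sites inside $S_n$, successively reducing the conditioning to past-directed sets to which (A2) applies, while keeping all log-estimates uniform through (A3). This parallels the strategy of \cite[Theorem 3.1]{1-marcus}, but the replacement of the global lower bound $\Cmu > 0$ by the past-restricted $\Cmu^{-}(\nu) > 0$, and of the fully-supported Gibbs hypothesis by the weaker equilibrium-state-with-D-condition setup, forces every estimate to be carried out inside $\supp(\mu)$ and tracked along the lexicographic past.
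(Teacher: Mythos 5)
Your overall strategy (Gibbs identity plus lexicographic chain rule plus (A2)/(A3) plus shift-invariance of $\nu$) is in the spirit of the paper, but the specific route you take has a genuine gap, and it sits exactly where you flag your ``main obstacle''. Your chain rule conditions every site $p$ on $x(\partial S_n \cup A_p)$, i.e.\ the conditioning sets always contain the outer boundary $\partial S_n$, which (after translating by $p$) is \emph{not} a subset of the lexicographic past $\past$. But (A2) and (A3) are statements only about $p_{\mu,S}$ with $S \Subset \past$: uniform convergence as $S \rightarrow \past$, and the lower bound $\Cmu^{-}(\nu)$, give you no control whatsoever over $\mu\left(x(p) \,\middle\vert\, x(A_p)x(\partial S_n)\right)$, not even the per-site bound $-\log \Cmu^{-}(\nu)$ you invoke for the shallow sites. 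The ``iterative MRF marginalization'' you propose cannot close this: marginalizing over the future-of-$p$ sites expresses the past-only conditional probability as a $\mu$-\emph{average} of boundary-augmented ones, and without a spatial mixing hypothesis (SSM is deliberately not assumed in this theorem; it only appears later to verify (A2) in the corollaries) an average being close to $p_\mu(\sigma_p x)$ says nothing pointwise about the particular term $\mu(x(p) \mid x(A_p)x(\partial S_n))$. The paper avoids this entirely by never conditioning on $\partial S_n$ in the chain rule: it controls the \emph{unconditional} quantity $\log \mu(x(S_n))$, whose chain-rule factors are conditioned only on past subsets of $S_n$, which are precisely the objects (A2) and (A3) govern.

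There is a second gap on the partition-function side. You claim $\frac{1}{|S_n|}\log Z^{\Phi,\,x(\partial S_n)}_{S_n} \rightarrow P(\Phi)$ as a ``classical consequence of (A1)''. With hard constraints this is not classical: the D-condition is assumed only on $\supp(\mu)$ (possibly a proper subshift of $\mathsf{X}(\Phi)$), and it only permits gluing a configuration on $S_n$ to configurations lying \emph{outside} the larger set $T_n$, i.e.\ it controls conditioning on $\partial T_n$ across a van Hove--negligible annulus, not conditioning directly on $\partial S_n$. Even after passing to $\partial T_n$, what the D-condition yields is a lower bound in terms of the \emph{restricted} partition function $Z^{\Phi}(S_n, \supp(\mu))$ over $\Leng_{S_n}(\supp(\mu))$, and identifying $\lim \frac{1}{|S_n|}\log Z^{\Phi}(S_n,\supp(\mu))$ with $P(\Phi)$ is exactly the step where the paper uses the equilibrium-state hypothesis: writing $P(\Phi) = \int (I_\mu + A_\Phi)\,d\mu$ and applying Jensen's inequality to squeeze $\frac{1}{|S_n|}\log Z^{\Phi}(S_n,\supp(\mu))$ between $P(\Phi)$ and $\frac{1}{|S_n|}\log Z^{\Phi}_{S_n}$. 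Your proposal uses the equilibrium-state property only through Lanford--Ruelle (to get Gibbsianness, which is fine and also implicit in the paper), so it has no mechanism to recover this identification, which is precisely what replaces the full-support/SSF hypotheses of the earlier results this theorem generalizes.
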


\begin{proof}
We follow the proof of \cite[Theorem 3.1]{1-marcus} very closely. Let's denote $X = \mathsf{X}(\Phi)$ and $Y = \supp(\mu)$. For any $S \Subset \Z^d$, $w \in \Leng_S(Y)$ if and only if $\mu(w) > 0$. Choose $\ell < 0$ and $L > 0$ to be lower and upper bounds on finite values of $\Phi$,  respectively. Let $S_n$ and $T_n$ be as in the definition of the D-condition. Fix $n \in \N$ and let $R_n = |T_n| - |S_n|$. Note that for any $w \in \Leng_{S_n}(Y)$,
\begin{equation}
\label{eq7-1}
\mu(w) = \sum_{\delta \in \Leng_{\partial T_n}(Y)} \mu(w | \delta)\mu(\delta).
\end{equation}

For any such $w$ and $\delta$, in a very similar way to \cite[Theorem 3.1]{1-marcus} but considering the D-condition on $Y$ rather than on $X$, we have:
\begin{equation}
\label{eq7-2}
\frac{e^{-U_{S_n}^{\Phi}(w)}}{Z^{\Phi}(S_n)} e^{-C_dR_n(\log|\Symb| + L - \ell)}	\leq \mu(w | \delta) \leq \frac{e^{-U_{S_n}^{\Phi}(w)}}{Z^{\Phi}(S_n,Y)} e^{C_dR_n(\log|\Symb| + L - \ell)},
\end{equation}
for some constant $C_d \geq 1$ and $Z^{\Phi}(S_n,Y) := \sum_{v \in \Leng_{S_n}(Y)} e^{-U_{S_n}^{\Phi}(v)}$. Then, since $\sum_{\delta \in \Leng_{\partial T_n}(Y)}\mu(\delta) = 1$, we can combine Equation \ref{eq7-1} and Equation \ref{eq7-2} to see that:
\begin{equation}
\label{eq7-3}
\gamma^{-R_n} \leq \mu(w)Z^{\Phi}(S_n)e^{U_{S_n}^{\Phi}(w)} \leq \gamma^{R_n} \frac{Z^{\Phi}(S_n)}{Z^{\Phi}(S_n,Y)},
\end{equation}
where $\gamma := e^{C_d(\log|\Symb| + L - \ell)} > 0$ and $w = x(S_n)$, for a given $x \in Y$. Therefore, since $\mu$ is an equilibrium state and $\log$ is a concave function, by Jensen's inequality:
\begin{align}
P(\Phi)	&	=			\int{\left(I_\mu + A_\Phi\right)}d\mu	\\
		&	=			\lim_{n \to \infty}\frac{1}{|S_n|}\sum_{w \in \Leng_{S_n}(Y)}\left(-\log(\mu(w))\mu(w) - U_{S_n}^{\Phi}(w)\right)	\\
		&	=			\lim_{n \to \infty}\frac{1}{|S_n|}\sum_{w \in \Leng_{S_n}(Y)}\log\left(\frac{e^{-U_{S_n}^{\Phi}(w)}}{\mu(w)}\right)\mu(w)	\\
		&	\leq	\lim_{n \to \infty}\frac{1}{|S_n|}\log\left(\sum_{w \in \Leng_{S_n}(Y)} e^{-U_{S_n}^{\Phi}(w)}\right)	\\	
		&	=			\lim_{n \to \infty} \frac{\log Z^{\Phi}(S_n,Y)}{|S_n|}	\\
		&	\leq	\lim_{n \to \infty} \frac{\log Z^{\Phi}(S_n)}{|S_n|}	 = P(\Phi),
\end{align}
where we have used $Z^{\Phi}(S_n,Y) \leq Z^{\Phi}(S_n)$. Then, taking logarithms and dividing by $|S_n|$ in Equation \ref{eq7-3}:
\begin{align}
- \frac{R_n}{|S_n|} \log\gamma	&	\leq \frac{1}{|S_n|} \left(\log\mu(w) + \log Z^{\Phi}(S_n) + U_{S_n}^{\Phi}(w)\right)	\\
						&	\leq \frac{R_n}{|S_n|} \log\gamma  + \frac{\log Z^{\Phi}(S_n)}{|S_n|} - \frac{\log Z^{\Phi}(S_n,Y)}{|S_n|},
\end{align}
and, given that $\frac{R_n}{|S_n|} \to 0$ and $\lim_{n \to \infty} \frac{\log Z^{\Phi}(S_n,Y)}{|S_n|} = P(\Phi)$, this implies:
\begin{equation}
\label{conv}
\frac{1}{|S_n|}\left(\log Z^{\Phi}(S_n) + \log\mu(x(S_n)) + U_{S_n}^{\Phi}(x(S_n))\right) \to 0,
\end{equation}
uniformly in $x \in \supp(\mu)$, since $\gamma$, $R_n$, $|S_n|$, $Z^{\Phi}(S_n)$ and $Z^{\Phi}(S_n,Y)$ do not depend on $x$. Having this, the proof follows exactly as in \cite[Theorem 3.1]{1-marcus}.
\end{proof}

Considering the TSSM property, we have the following result.

\begin{cor}
\label{TSSMRep}
Let $\Phi$ be a n.n. interaction, $\mu$ an equilibrium state for $\Phi$, and $\nu$ a shift-invariant measure with $\supp(\nu) \subseteq \supp(\mu)$ such that:
\begin{itemize}
\item $\supp(\mu)$ satisfies TSSM, and
\item $\mu$ satisfies SSM.
\end{itemize}

Then, $P(\Phi) = \int{\left(I_\mu + A_\Phi\right)}d\nu$.
\end{cor}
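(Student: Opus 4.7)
The strategy is to deduce the corollary from Theorem \ref{thmA} by verifying its three hypotheses (A1), (A2), (A3) under the standing assumptions.

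Hypothesis (A1) follows immediately: TSSM on $\supp(\mu)$ implies strong irreducibility (see \eqref{mixImpl}), say with gap $g$. Choosing $S_n := \block_n$ and $T_n := \block_{n+g}$, one has $S_n \nearrow \infty$ in the van Hove sense and $|T_n|/|S_n| \to 1$; for any $u \in \Leng_U(\supp(\mu))$ with $U \Subset T_n^c$ and $v \in \Leng_{S_n}(\supp(\mu))$, strong irreducibility gives $[u]_{\supp(\mu)} \cap [v]_{\supp(\mu)} \neq \emptyset$ since $\dist(U,S_n) \geq g+1$. For (A3), the D-condition obtained above combined with the variational characterization invoked after Proposition \ref{dconsupp} forces the equilibrium state $\mu$ for $A_\Phi$ to be a n.n.\ Gibbs measure for $\Phi$, hence an MRF; Proposition \ref{TSSM-Cmu} then gives $\Cmu > 0$, and since $\supp(\nu) \subseteq \supp(\mu)$ and $\past \subseteq \Z^d \setminus \{0\}$, the infimum defining $\Cmu^{-}(\nu)$ is taken over a sub-family of the one defining $\Cmu$, yielding $\Cmu^{-}(\nu) \geq \Cmu > 0$.

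Hypothesis (A2) is the principal step. Let $f$ be the SSM rate of $\mu$; the plan is to show that $\{p_{\mu,S}(x)\}$ is uniformly Cauchy as $S$ exhausts $\past$ over $x \in \supp(\nu)$, with modulus $f(m+1) \to 0$. Fix $m \in \N$, take $n \geq m+1$, and consider $x \in \supp(\nu)$ together with $\past_n \subseteq S_1, S_2 \Subset \past$. Set
\[
W := \neig_m(0) \cap (\{0\} \cup \{p : p \succ 0\}),
\]
a region which contains $\{0\}$ and is disjoint from every $S_i \subseteq \past$. Its boundary decomposes as $\partial W = (\partial W \cap \past) \sqcup (\partial W \setminus \past)$, with the past part contained in $\past_{m+1} \subseteq S_1 \cap S_2$ and the future part at distance exactly $m+1$ from $0$. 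Applying the MRF property with shell $\partial W$ yields
\[
p_{\mu, S_i}(x) = \sum_{\gamma \in \Symb^{\partial W \setminus \past}} \mu^{\delta_\gamma}(x(0)) \, \mu(\gamma \mid x(S_i)),
\]
where $\delta_\gamma := x(\partial W \cap \past) \cdot \gamma$, and the fixed past part $x(\partial W \cap \past)$ is common to $i=1,2$. For any $\gamma_1, \gamma_2$ of positive weight, $\delta_{\gamma_1}$ and $\delta_{\gamma_2}$ agree on $\partial W \cap \past$ and differ only on sites at distance $\geq m+1$ from $0$, so SSM with $U = \{0\}$ gives $|\mu^{\delta_{\gamma_1}}(x(0)) - \mu^{\delta_{\gamma_2}}(x(0))| \leq f(m+1)$. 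Integrating against the product $\mu(\gamma_1 \mid x(S_1)) \, \mu(\gamma_2 \mid x(S_2))$ (each factor being a probability distribution) produces
\[
|p_{\mu,S_1}(x) - p_{\mu,S_2}(x)| \leq f(m+1),
\]
uniformly in $x \in \supp(\nu)$ and in $S_1, S_2 \supseteq \past_n$. This is the desired uniform Cauchy property; the common limit $p_\mu(x)$ is then defined on all of $\supp(\nu)$ and agrees $\mu$-a.e.\ with the martingale-theoretic definition, establishing (A2).

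The main obstacle is the construction of $W$ in (A2). It must simultaneously (i) contain $\{0\}$, (ii) be disjoint from every possible $S_i \subseteq \past$ so that MRF applies with $\partial W$ as a separating shell, (iii) have $\partial W \cap \past \subseteq \past_n$ so that this portion is fixed across $S_1, S_2$, and (iv) place $\partial W \setminus \past$ at distance $\geq m+1$ from $0$ so that SSM contributes the rate $f(m+1)$. The lexicographic asymmetry between past and future makes this geometrically delicate, but the choice $W = \neig_m(0) \cap (\{0\} \cup \{p : p \succ 0\})$ satisfies all four requirements. Positivity of $\mu$ on each boundary $\delta_\gamma$ appearing in the decomposition is automatic, since each arises from a conditioning of positive $\mu$-mass. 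With (A1), (A2), (A3) verified, Theorem \ref{thmA} delivers $P(\Phi) = \int (I_\mu + A_\Phi) \, d\nu$.
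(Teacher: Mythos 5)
Your proposal is correct, and at the top level it is the same proof as the paper's: both deduce the corollary from Theorem \ref{thmA} by checking (A1)--(A3), and both handle (A3) via Proposition \ref{TSSM-Cmu} together with $\Cmu^{-}(\nu) \geq \Cmu$. Where you genuinely diverge is in the treatment of the other two hypotheses. For (A1) the paper simply asserts that TSSM implies the D-condition, whereas you verify it explicitly with $S_n = \block_n$, $T_n = \block_{n+g}$ via strong irreducibility; this is exactly the intended argument. For (A2) the paper outsources the implication ``SSM $\Rightarrow$ (A2)'' to an external result (\cite[Proposition 2.14]{1-marcus}), while you prove it directly: your region $W = \neig_m(0) \cap (\{0\} \cup \{p: p \succ 0\})$ does satisfy all four requirements you list (its past boundary lies in $\past_{m+1} \subseteq S_i$, its non-past boundary sits at distance exactly $m+1$ from the origin, so the SSM bound is $f(m+1)$ with no monotonicity assumption on $f$ needed), and the double-averaging over $\gamma_1,\gamma_2$ gives the uniform Cauchy estimate, hence (A2) in the precise directed-limit sense of the paper. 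Two minor points: the MRF step implicitly uses the standard marginalization over $W \setminus \{0\}$ (the MRF definition is stated for full configurations on $W$), which you should note; and in (A3) the D-condition is not actually needed to conclude that the equilibrium state is a Gibbs measure --- per the remark after Proposition \ref{dconsupp}, the direction ``equilibrium $\Rightarrow$ Gibbs'' holds unconditionally for n.n.\ interactions (and in any case the SSM hypothesis already presupposes $\mu$ is an MRF) --- so that invocation is a harmless redundancy rather than an error. What your route buys is a self-contained proof independent of the cited Marcus--Pavlov proposition; what the paper's route buys is brevity.
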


\begin{proof}
This follows from Theorem \ref{thmA}: (A1) is implied by TSSM, since TSSM implies the D-condition; (A2) is implied by SSM (see \cite[Proposition 2.14]{1-marcus}); and (A3) is implied by TSSM (see Proposition \ref{TSSM-Cmu}), considering that $\Cmu^-(\nu) \geq \Cmu$.
\end{proof}

\begin{cor}
\label{HighRep}
Let $\mu$ be a $\Z^2$ MRF that satisfies exponential SSM with rate $\alpha > 4\log|\Symb|$. If $\mu$ is an equilibrium state for a n.n. interaction $\Phi$, we have that $P(\Phi) = \int{\left(I_\mu + A_\Phi\right)}d\nu$, for every shift-invariant measure $\nu$ such that $\supp(\nu) \subseteq \supp(\mu)$.
\end{cor}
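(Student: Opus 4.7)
The plan is to obtain this as a direct consequence of the two main pillars already in place: Theorem \ref{rate}, which converts a high-rate exponential SSM assumption into TSSM of the support, and Corollary \ref{TSSMRep}, which gives the pressure representation as soon as the support satisfies TSSM and the equilibrium state satisfies SSM. No new estimate is needed; the work is entirely in verifying hypotheses.

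First I would invoke Theorem \ref{rate} with the given $\Z^2$ MRF $\mu$. The rate hypothesis $\alpha > 4\log|\Symb|$ is exactly the assumption of that theorem, so we conclude that $\supp(\mu)$ is a n.n.\ SFT (by Corollary \ref{TSSMSFT}) satisfying TSSM with some gap $g \in \N$. In particular, since TSSM implies strong irreducibility, $\supp(\mu)$ satisfies the D-condition, which is what is needed for hypothesis (A1) in Theorem \ref{thmA}. Furthermore, by Proposition \ref{TSSM-Cmu}, we have $\Cmu > 0$, and since $\Cmu^{-}(\nu) \geq \Cmu$ for any shift-invariant $\nu$ with $\supp(\nu)\subseteq\supp(\mu)$, hypothesis (A3) holds as well.

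Next I would verify hypothesis (A2), the uniform convergence $\lim_{S \to \past} p_{\mu,S}(x) = p_\mu(x)$ over $x \in \supp(\nu) \subseteq \supp(\mu)$. This is precisely the content of \cite[Proposition 2.14]{1-marcus} cited in the proof of Corollary \ref{TSSMRep}: SSM of $\mu$ (exponential SSM is more than enough) forces the conditional probabilities on growing past-shaped sets to converge uniformly on the support. With (A1), (A2), (A3) all verified and $\mu$ assumed to be an equilibrium state for $\Phi$, Theorem \ref{thmA} applies and yields
\begin{equation*}
P(\Phi) \;=\; \int\bigl(I_\mu + A_\Phi\bigr)\, d\nu,
\end{equation*}
for every shift-invariant $\nu$ with $\supp(\nu) \subseteq \supp(\mu)$.

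Since every hypothesis is either given or already proved in an earlier section, there is no substantive obstacle; the only thing to be careful about is that Corollary \ref{TSSMRep} is stated for equilibrium states (not merely Gibbs measures), which is why the extension of the Marcus--Pavlov argument to equilibrium states carried out in Theorem \ref{thmA} is essential here. In other words, the whole content of the corollary is the observation that ``high-rate exponential SSM in $d=2$'' is a clean, checkable condition that packages together everything the representation theorem demands, without requiring any combinatorial assumption (safe symbol, SSF, or TSSM) to be verified by hand.
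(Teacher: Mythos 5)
Your proposal is correct and follows essentially the route the paper intends: Theorem \ref{rate} upgrades the high-rate exponential SSM hypothesis to TSSM of $\supp(\mu)$, after which Corollary \ref{TSSMRep} (equivalently, checking (A1)--(A3) of Theorem \ref{thmA} exactly as you do) gives the representation. The only cosmetic slip is attributing to Corollary \ref{TSSMSFT} that the support is a \emph{nearest-neighbour} SFT --- it only yields an SFT with forbidden configurations of bounded diameter --- but this plays no role in the argument.
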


Notice that, in contrast to preceding results, no mixing condition on the support is explicitly needed in Corollary \ref{HighRep}.

\section{Algorithmic implications}
\label{PressureApp}

In this section we give algorithmic results related with TSSM and pressure approximation. For the latter, we make heavy use of the representation results from the previous section.

\begin{prop}[\cite{1-gamarnik}]
Let $\Phi$ be the n.n. interaction corresponding to the hard-core model on $\Z^d$ with activity $\lambda > 0$. If:
\begin{equation}
\lambda < \lambda_c(\mathbb{T}_{2d}) := \frac{(2d-1)^{2d-1}}{(2d-2)^{2d}},
\end{equation}
then there is an algorithm to compute $P(\Phi)$ to within $\epsilon$ in time $\mathrm{poly}\left(\frac{1}{\epsilon}\right)$.
\end{prop}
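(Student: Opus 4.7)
The plan is to combine the pressure representation of Section \ref{PressureRep} with Weitz's polynomial-time algorithm for hard-core marginals below the tree uniqueness threshold: express $P(\Phi)$ as a single conditional probability at the all-zeros configuration, approximate it by a finite-volume marginal using exponential SSM, and compute the finite marginal efficiently via the self-avoiding walk tree reduction. Since $0$ is a safe symbol for the hard square shift $\mathcal{H}_d$, this SFT satisfies SSF and hence TSSM (Proposition \ref{ssftssm}). The hypothesis $\lambda < \lambda_c(\mathbb{T}_{2d})$ is Weitz's tree-threshold condition, under which the n.n.\ Gibbs measure $\mu$ for $\Phi$ is unique and satisfies exponential SSM with rate $f(n) = Ce^{-\alpha n}$. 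The configuration $0^{\Z^d}$ is a fixed point of the shift, so $\nu_0 := \delta_{0^{\Z^d}}$ is a shift-invariant measure with $\supp(\nu_0) \subseteq \supp(\mu)$; applying Corollary \ref{TSSMRep} with $\nu = \nu_0$ gives
\[
P(\Phi) \;=\; \int (I_\mu + A_\Phi)\, d\nu_0 \;=\; I_\mu(0^{\Z^d}) + A_\Phi(0^{\Z^d}) \;=\; -\log p_\mu(0^{\Z^d}),
\]
since $\Phi(0) = 0$ and $\Phi(00) = 0$ force $A_\Phi(0^{\Z^d}) = 0$.

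Next I would control the truncation error. Given $N \in \N$, let $\hat q_N$ denote the single-site marginal at the origin of the hard-core Gibbs measure on the finite graph $G_N$ obtained from $\block_N$ by pinning the sites of $\past_N$ to $0$. Iterating exponential SSM (first to replace the infinite past by $\past_N$, as in condition (A2) of Theorem \ref{thmA}, and then to replace the infinite-volume measure by the finite-volume measure on $\block_N$) yields $|\hat q_N - p_\mu(0^{\Z^d})| \leq C' e^{-\alpha N}$. Since the safe-symbol property gives $p_\mu(0^{\Z^d}) \geq \Cmu > 0$ by Proposition \ref{TSSM-Cmu}, Lipschitz continuity of $-\log$ on $[\Cmu, 1]$ promotes this to $\left|{-\log \hat q_N - P(\Phi)}\right| \leq C'' e^{-\alpha N}$, so $N = O(\log(1/\epsilon))$ makes the truncation error at most $\epsilon/2$.

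Finally, the algorithmic core is the polynomial-time computation of $\hat q_N$. Since $G_N$ has maximum degree $2d$ and $\lambda < \lambda_c(\mathbb{T}_{2d})$, Weitz's algorithm computes $\hat q_N$ to within additive error $\epsilon/2$ in time $\mathrm{poly}(|G_N|, 1/\epsilon) = \mathrm{poly}(1/\epsilon)$. This is the main obstacle: exact evaluation is $\# P$-hard in the worst case, and what rescues the argument is Weitz's self-avoiding walk tree reduction, which turns the hard-core marginal on $G_N$ into a hard-core marginal on a tree of maximum degree $2d$. The condition $\lambda < \lambda_c(\mathbb{T}_{2d})$ guarantees exponential decay of the corresponding tree recursion, which can therefore be truncated at depth $O(\log(1/\epsilon))$, keeping the explored tree polynomially sized.
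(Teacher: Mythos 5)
Your argument is correct and is essentially the proof the paper relies on by citation: the Gamarnik--Katz representation $P(\Phi) = I_\mu\left(0^{\Z^d}\right) + A_\Phi\left(0^{\Z^d}\right)$ at the safe-symbol fixed point (recovered here via Corollary \ref{TSSMRep}, legitimate since SSF gives TSSM and the D-condition, so the unique Gibbs measure is an equilibrium state), exponential SSM from Weitz's tree-threshold result to truncate to a box of side $O(\log(1/\epsilon))$, and the self-avoiding walk tree algorithm to evaluate the finite marginal in polynomial time. No substantive deviation from the intended route, and no gaps worth flagging.
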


\begin{note}
The value $\lambda_c(\mathbb{T}_{2d})$ corresponds to the critical activity of the hard-core model in the $2d$-regular tree $\mathbb{T}_{2d}$. This model satisfies exponential SSM (an extension of Definition \ref{SSMspec} to arbitrary graphs) if $\lambda < \lambda_c(\mathbb{T}_{2d})$. It is also known that the partition function of the hard-core model with $\lambda < \lambda_c(\mathbb{T}_{2d})$ in any finite graph of degree $2d$ can be efficiently approximated (for these and more results, see the fundamental work of D. Weitz in \cite{1-weitz}).
\end{note}

\begin{prop}[\cite{1-marcus}]
\label{algSSF}
Let $\Phi$ be a n.n. interaction and $\mu$ a Gibbs measure for $\Phi$ such that:
\begin{itemize}
\item $\mathsf{X}(\Phi)$ satisfies SSF, and
\item $\mu$ satisfies exponential SSM.
\end{itemize}

Then, there is an algorithm to compute $P(\Phi)$ to within $\epsilon$ in time $e^{O\left((\log(1/\epsilon))^{d-1}\right)}$.
\end{prop}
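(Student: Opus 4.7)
My plan is to combine the pressure representation of Corollary \ref{TSSMRep} with a two-stage approximation (truncation of the lexicographic past, then reduction to a finite Gibbs specification) and finish with a transfer-matrix computation that exploits dimension $d-1$. Since Proposition \ref{ssftssm} shows that SSF implies TSSM (with gap $g=2$), Corollary \ref{TSSMRep} applies and gives $P(\Phi) = \int (I_\mu + A_\Phi)\, d\nu$ for any shift-invariant $\nu$ with $\supp(\nu) \subseteq \supp(\mu)$. By Proposition \ref{TSSMperiod}, I can choose a periodic point $x^* \in \supp(\mu)$ and let $\nu$ be the uniform measure on its (finite) orbit, reducing the computation to evaluating the orbit-average of $I_\mu(x^*) + A_\Phi(x^*)$. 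The term $A_\Phi$ depends only on $x^*$ near the origin and is trivially computable, so the only substantive quantity is $I_\mu(x^*) = -\log p_\mu(x^*)$.

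The first approximation truncates the past: exponential SSM gives $p_{\mu,\past_n}(x^*) \to p_\mu(x^*)$ at exponential rate, uniformly in $x^*$ (apply SSM to two boundaries agreeing on $\past_n$ and differing beyond). Since $\Cmu > 0$ by Proposition \ref{TSSM-Cmu}, the map $-\log$ is Lipschitz on the relevant range, so taking $n = O(\log(1/\epsilon))$ suffices to approximate $I_\mu(x^*)$ to within $\epsilon/2$.

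The second approximation converts the conditional $p_{\mu,\past_n}(x^*) = \mu(x^*(0) \mid x^*(\past_n))$ into a finite computation. Enclose $\past_n \cup \{0\}$ in a cube $\block_m$, set $W := \block_m \setminus \past_n$, and write $\eta' := x^*(\partial \past_n \cap \block_m)$. Averaging the MRF identity
\begin{equation}
\mu\left(x^*(0) \,\middle\vert\, x^*(\past_n),\, x^*(\partial \block_m) = \delta\right) = \Lambda^{\eta'\delta}_{W}(x^*(0))
\end{equation}
over $\delta \in \Symb^{\partial \block_m}$ against the appropriate conditional expresses $p_{\mu,\past_n}(x^*)$ as a convex combination of such Gibbs specifications. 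Exponential SSM at the single site $\{0\}$ bounds the variation in $\delta$ by $Ce^{-\alpha(m-n)}$, since the differing portion of the boundary lies on $\partial \block_m$, at distance $\geq m - n$ from $0$. Taking $m = n + O(\log(1/\epsilon)) = O(\log(1/\epsilon))$, I can replace the convex combination by a single fixed $\delta^*$ with error at most $\epsilon/4$; SSF is crucial here because it guarantees, by iterative single-site filling, that $\delta^*$ can be chosen with $\eta'\delta^*$ admissible, so that $\Lambda^{\eta'\delta^*}_W$ is well-defined.

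Finally, I would compute $\Lambda^{\eta'\delta^*}_W(x^*(0))$ by a transfer-matrix recursion in the $e_1$-direction: each slice $\{p \in \block_m : p_1 = i\}$ has at most $(2m+1)^{d-1}$ sites, so the state space has size $|\Symb|^{O(m^{d-1})}$ and processing the $O(m)$ slices costs $e^{O(m^{d-1})}$ operations. With $m = O(\log(1/\epsilon))$, this yields the advertised running time $e^{O((\log(1/\epsilon))^{d-1})}$. I expect the main obstacle to be the careful chaining of the three error sources (past truncation, boundary substitution, transfer-matrix arithmetic) while maintaining all positivity conditions; the delicate step is the explicit construction of an admissible $\delta^*$ above, which is precisely where the SSF hypothesis, rather than mere SSM of $\mu$, is used.
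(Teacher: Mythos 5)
Your outline is sound and does reach the stated running time, but note that this proposition is quoted from \cite{1-marcus}, and the closest proof actually written in this paper is that of its generalization, Proposition \ref{algorithmProp}; compared with that proof your route differs in the middle step. The paper makes the same reduction you do (pressure representation evaluated on the orbit of a periodic point from Proposition \ref{TSSMperiod}, so that only $p_\mu(z)$ must be approximated, then transfer matrices), but it does not truncate the past and then substitute one boundary condition: it writes $p_\mu(z)$ as a convex combination of $\mu(z(0)\mid z(S_n)\delta)$ over \emph{all} $\delta$ on the outer boundary piece $V_n$ of $\romb_n\setminus\past_n$ with $z(S_n)\delta$ globally admissible, and outputs the maximum and minimum of these finitely many computable numbers as certified upper and lower bounds whose gap is at most $Ce^{-\alpha n}$ by SSM. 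That bracketing buys two things: the algorithm never needs to know the constants $C,\alpha$ (or the gap $g$), and it works under TSSM alone, at the price of a global-admissibility subroutine (Corollary \ref{globAdm}). Your two-stage scheme (truncate to $\past_n$, then replace the outer conditioning by a single $\delta^*$ built by iterative single-site filling) is closer to the original SSF argument of \cite{1-marcus}; it avoids the admissibility subroutine, but it needs the SSM constants to choose $n$ and $m$ (acceptable, since the algorithm may depend on $\Phi$ and $\mu$), and the exponential-rate uniform convergence $p_{\mu,\past_n}\to p_\mu$ that you assert in one line is exactly the content of condition (A2) via \cite[Proposition 2.14]{1-marcus}, which you should cite rather than re-derive informally.

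Two points need tightening, though both are fixable under your hypotheses. First, Corollary \ref{TSSMRep} assumes $\mu$ is an equilibrium state, while here $\mu$ is only assumed to be a Gibbs measure; either invoke Corollary \ref{corRep} directly (it is stated for Gibbs measures under SSF and SSM), or note that SSF implies TSSM, hence the D-condition, so that $\supp(\mu)=\mathsf{X}(\Phi)$ (Proposition \ref{dconsupp}) and $\mu$ is an equilibrium state. Second, saying that $\delta^*$ is chosen so that ``$\Lambda^{\eta'\delta^*}_W$ is well-defined'' is weaker than what your error estimate uses: Definition \ref{SSMspec} compares $\mu^{\delta_1}$ and $\mu^{\delta_2}$ only for boundary conditions of \emph{positive measure}, so to compare $\Lambda^{\eta'\delta^*}_W(x^*(0))$ with the terms of your convex combination you need $\mu(\eta'\delta^*)>0$, not mere $W$-admissibility. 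Under SSF this is automatic --- a locally admissible $\eta'\delta^*$ is globally admissible, and global admissibility gives positive measure because the support is all of $\mathsf{X}(\Phi)$ --- but this chain is precisely where SSF (rather than SSM of $\mu$) enters and should be spelled out, since for a general n.n.\ Gibbs measure local admissibility alone does not yield positive measure.
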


Note that in the case $d = 2$, Proposition \ref{algSSF} gives a polynomial time approximation algorithm. In Proposition \ref{algorithmProp}, we will extend this result by relaxing the mixing property in the support. First, some extra results.

\begin{lem}
\label{lemPer}
Let $X$ be a non-empty $\Z^d$ strongly irreducible shift space with gap $g$. Then, for all $S \Subset \Z^d$, $u \in \Symb^S$ and $x \in X$, $u$ is globally admissible if and only if there exists $y \in X$ such that:
\begin{eqnarray}
y(S) = u,	& \mbox{ and }	&	y\left(\Z^d \backslash \neig_g(S)\right) = x\left(\Z^d \backslash \neig_g(S)\right).
\end{eqnarray}
\end{lem}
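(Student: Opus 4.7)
The forward direction is immediate: if such a $y \in X$ exists, then $u = y(S) \in \Leng_S(X)$ by definition of the language, so $u$ is globally admissible.

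For the reverse direction, my plan is to apply strong irreducibility with the two ingredients $u$ (on $S$) and the restriction of $x$ to the complement of the buffer zone. Concretely, set $U := S$ and $V := \Z^d \setminus \neig_g(S)$. By construction, any $p \in V$ satisfies $\dist(p,S) \geq g+1$, so $\dist(U,V) \geq g+1 \geq g$, meeting the separation hypothesis. The configuration $u \in \Symb^{U}$ is globally admissible by assumption, and the configuration $v := x(V) \in \Symb^{V}$ is globally admissible because $x \in X$ itself witnesses it; hence $[u]_X, [v]_X \neq \emptyset$.

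Strong irreducibility with gap $g$ now yields $[uv]_X \neq \emptyset$, and any $y \in [uv]_X$ satisfies $y(S) = u$ together with $y(\Z^d \setminus \neig_g(S)) = x(\Z^d \setminus \neig_g(S))$, which is exactly what we need. The only non-routine point is that $V$ is infinite, whereas strong irreducibility is stated for finite $U,V$. This is handled exactly as in the remark immediately following the definition of strong irreducibility: for each finite $V_n \Subset V$ with $V_n \nearrow V$, apply strong irreducibility to $U$ and $V_n$ to produce $y_n \in X$ with $y_n(S) = u$ and $y_n(V_n) = x(V_n)$, then extract a convergent subsequence using compactness of $X$ in the metric $m$. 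The limit point $y$ agrees with $u$ on $S$ and with $x$ on all of $V$, completing the argument.

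I do not expect any real obstacle here; the lemma is essentially a packaging of the compact version of strong irreducibility, where the role of the gap $g$ is precisely to provide a buffer of width $g$ between $u$ and the copy of $x$ we paste outside. The statement is tight in the sense that the buffer cannot in general be narrower than $g$, since strong irreducibility may fail at distances smaller than the gap.
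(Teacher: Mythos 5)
Your proof is correct and follows essentially the same route as the paper: a direct application of strong irreducibility to $u$ and $x\left(\Z^d \backslash \neig_g(S)\right)$, using that $\dist\left(S,\Z^d \backslash \neig_g(S)\right) \geq g$, with the infinite shape handled by the compactness remark following the definition of strong irreducibility. Your explicit exhaustion of $\Z^d \backslash \neig_g(S)$ by finite sets and extraction of a limit point simply spells out what the paper leaves implicit.
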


\begin{proof}
This is a direct application of the definition of strong irreducibility for the configurations $u$ and $x\left(\Z^d \backslash \neig_g(S)\right)$, considering that $\dist\left(S,\Z^d \backslash \neig_g(S)\right) \geq g$.
\end{proof}

\begin{cor}
\label{globAdm}
Given $g \in \N$, there is an algorithm to decide if $u \in \Leng_S(X)$ in time $e^{O(|\partial_g S|\log|\Symb|)}$, for any non-empty $\Z^d$ shift space $X \subseteq \Symb^{\Z^d}$ that satisfies TSSM with gap $g$ and $S \Subset \Z^d$.
\end{cor}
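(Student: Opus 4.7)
The plan is to reduce global admissibility to a bounded local admissibility check by combining Proposition \ref{TSSMperiod} and Lemma \ref{lemPer}. Since $X$ is non-empty and satisfies TSSM with gap $g$, Proposition \ref{TSSMperiod} produces a periodic point $x^{*} \in X$ of period $2g$ in every coordinate direction; $x^{*}$ depends only on $X$, $g$, and $|\Symb|$, so I precompute one fundamental domain once and read off the value $x^{*}(p)$ at any $p \in \Z^d$ in $O(1)$ time. Since TSSM with gap $g$ implies strong irreducibility with gap $g$, Lemma \ref{lemPer} applied with $x = x^{*}$ gives the key reduction: $u \in \Leng_S(X)$ if and only if there exists $y \in X$ with $y(S) = u$ and $y(\Z^d \setminus \neig_g(S)) = x^{*}(\Z^d \setminus \neig_g(S))$.

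The algorithm enumerates all candidate fillings $v \in \Symb^{\partial_g S}$ of the collar $\partial_g S = \neig_g(S) \setminus S$ (there are $|\Symb|^{|\partial_g S|}$ of them) and, for each, tests whether the composite point $z := u \, v \, x^{*}(\Z^d \setminus \neig_g(S))$ lies in $X$. By the note following Corollary \ref{TSSMSFT}, $X$ is an SFT defined by forbidden configurations whose shapes lie in $\romb_g$, so membership $z \in X$ is equivalent to local admissibility: no translate $p + \romb_g$ of $z$ matches a forbidden pattern. Crucially, since $x^{*} \in X$, every forbidden occurrence in $z$ must involve at least one site of $\neig_g(S)$; hence its support is contained in the bounded thickening $\neig_{2g}(S)$. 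Checking local admissibility on $\neig_{2g}(S)$ therefore certifies $z \in X$, and this check runs in time polynomial in $|S| + |\partial_g S|$ (with constants depending only on $d$, $g$, and $|\Symb|$). Return ``yes'' as soon as some $v$ succeeds and ``no'' otherwise; correctness is immediate from Lemma \ref{lemPer}.

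For the runtime, the isoperimetric inequality on $\Z^d$ yields $|S| = O\bigl(|\partial_g S|^{d/(d-1)}\bigr)$, so $\mathrm{poly}(|S|+|\partial_g S|) = \mathrm{poly}(|\partial_g S|)$ and, since $|\Symb| \geq 2$, this polynomial factor is absorbed into $e^{O(|\partial_g S|\log|\Symb|)}$. The total cost is therefore $|\Symb|^{|\partial_g S|} \cdot \mathrm{poly}(|\partial_g S|) = e^{O(|\partial_g S|\log|\Symb|)}$, as claimed. The only delicate point is confining the local-admissibility check to the bounded neighbourhood $\neig_{2g}(S)$, which is legitimate precisely because $x^{*}$ is a bona fide element of $X$ and the forbidden shapes have diameter at most $g$; all other constants ($d$, $g$, $|\Symb|$) are absorbed into the implicit $O(\cdot)$, in line with the framing of the statement.
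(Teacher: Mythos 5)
Your argument is essentially the paper's proof: find the periodic point from Proposition \ref{TSSMperiod} as a constant-time preprocessing step, use Lemma \ref{lemPer} (via strong irreducibility, implied by TSSM) to reduce $u \in \Leng_S(X)$ to the existence of a collar configuration $v \in \Symb^{\partial_g S}$ gluing $u$ to the periodic point, test each of the $|\Symb|^{|\partial_g S|}$ candidates by a local-admissibility check on a bounded thickening of $S$ (legitimate because the forbidden shapes lie in $\romb_g$ by the note after Corollary \ref{TSSMSFT}), and absorb the $\mathrm{poly}(|S|)$ factor via the isoperimetric inequality. The one slip is the confinement radius: since the forbidden shapes have diameter $2g$, a forbidden occurrence meeting $\neig_g(S)$ can have support reaching distance $3g$ from $S$, so the check must examine all translates meeting $\neig_g(S)$, i.e.\ be run on $\neig_{3g}(S)$ (the paper uses $\neig_{3g+1}(S)$) rather than $\neig_{2g}(S)$; this affects only a constant and neither the correctness of the method nor the stated time bound.
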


\begin{proof}
By the note after Corollary \ref{TSSMSFT}, we know that $X = \mathsf{X}(\mathcal{F})$ for some $\mathcal{F} \subseteq \Symb^{\romb_g}$.
By Proposition \ref{TSSMperiod}, there exists a periodic point in $X$ of period $2g$ in every direction. Then, by checking all the possible configurations in $\Symb^{[1,2g]^d + \romb_g}$, we can find a periodic point $z$ in time $e^{O(g^d\log|\Symb|)} = O(|\Symb|)$. Given $u \in \Symb^S$, by Lemma \ref{lemPer}, we only need to check that $u$ and $z\left(\partial_{2g+1} \neig_g(S)\right)$ can be extended together to a locally admissible configuration on $\neig_{3g+1}(S)$. It can be checked in time $e^{O(|S|)}$ whether $u$ is locally admissible or not. On the other hand, it can be decided in time $e^{O(|\partial_g S|\log|\Symb|)}$ if there exists a configuration $v \in \Symb^{\partial_g S}$ such that $uvz\left(\partial_{2g+1} \neig_g(S)\right)$ is locally admissible. This is enough for deciding if $u$ is globally admissible or not. Thanks to the discrete isoperimetric inequality $|\partial S| \geq 2d|S|^{\frac{d-1}{d}}$ (this follows directly from the discrete Loomis and Whitney inequality \cite{1-loomis}), we have that $|S| = e^{O(|\partial S|)}$, and we conclude that the total time of the algorithm is $e^{O(|\partial_g S|)\log|\Symb|}$.
\end{proof}

\begin{rem}
It is worthwhile to point that, when $d \geq 3$, there are no known good bounds on the time for checking global admissibility in $\Z^d$ SFTs that only satisfy strong irreducibility.
\end{rem}

\begin{cor}
\label{checkTSSM}
Given $N \in \N$, let $\mathcal{F} \subseteq \Symb^{\romb_N}$ such that $X = \mathsf{X}(\mathcal{F})$ is a non-empty SFT, strongly irreducible with gap $g_0$, for some $g_0 \in \N$. Then, for every $g \geq g_0$, there is an algorithm to check whether $X$ satisfies TSSM with gap $g$ or not, in time $e^{O((g+N)^d\log|\Symb|)}$.
\end{cor}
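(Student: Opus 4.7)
The plan is to reduce the problem to checking the finite characterization of TSSM given in Lemma \ref{rombTSSM}: namely, $X$ satisfies TSSM with gap $g$ if and only if for every $S \subseteq \romb_{g+N-1} \setminus \{0\}$, $u \in \Symb^{\{0\}}$, $s \in \Symb^S$, and $v \in \Symb^{\partial_{2N+1}\romb_{g-1} \setminus S}$, the implication $[us]_X, [sv]_X \neq \emptyset \implies [usv]_X \neq \emptyset$ holds. All configurations here are supported on shapes contained in a box of diameter $O(g+N)$, so the list of conditions to verify has cardinality $e^{O((g+N)^d \log|\Symb|)}$.

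First, I would locate an explicit reference point in $X$ to enable tractable global-admissibility testing. By Proposition \ref{TSSMperiod}, if $X$ satisfies TSSM with gap $g$, then $X$ contains a point of period $2g$ in every coordinate direction, determined by a configuration on the fundamental domain $[1,2g]^d$. The algorithm exhaustively searches $\Symb^{[1,2g]^d}$ and, for each candidate, checks whether its periodic extension avoids every pattern in $\mathcal{F} \subseteq \Symb^{\romb_N}$; this is a local admissibility check at $(2g)^d$ positions on the torus and takes total time $e^{O((g+N)^d \log|\Symb|)}$. If no valid periodic configuration is found, the contrapositive of Proposition \ref{TSSMperiod} certifies that TSSM with gap $g$ fails, and the algorithm halts with a negative answer. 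Otherwise, we fix such a periodic point $z \in X$.

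Next, invoking strong irreducibility with gap $g_0 \leq g$ together with Lemma \ref{lemPer}, we can test global admissibility of any configuration $w \in \Symb^W$ with $W \subseteq \neig_{2N+1}(\romb_{g-1}) \cup \{0\}$ as follows: enumerate candidate fillings $\tilde{w} \in \Symb^{\neig_{g_0}(W) \setminus W}$ and verify whether $w \tilde{w}\, z(\Z^d \setminus \neig_{g_0}(W))$ is locally admissible; since $\mathcal{F} \subseteq \Symb^{\romb_N}$, this reduces to checking patterns of shape $\romb_N$ within $\neig_{g_0+N}(W)$. Both the number of fillings and the per-check cost are bounded by $e^{O((g+N)^d \log|\Symb|)}$. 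The algorithm then iterates over all quadruples $(S,u,s,v)$ prescribed by Lemma \ref{rombTSSM}, runs the test on $us$, $sv$, and (when both are globally admissible) on $usv$, and halts negatively at the first violation, or positively if none is found. Combining the enumeration of quadruples with the per-tuple admissibility checks yields the claimed runtime $e^{O((g+N)^d \log|\Symb|)}$.

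The main obstacle is that the hypothesis provides only strong irreducibility, not TSSM itself, so Corollary \ref{globAdm} cannot be invoked directly to test global admissibility. The periodic-point search circumvents this in a self-correcting way: either it yields a concrete $z \in X$ that unlocks Lemma \ref{lemPer}, or its failure already certifies that TSSM with gap $g$ does not hold. The remaining work is bookkeeping around the bounded-shape characterization in Lemma \ref{rombTSSM}, together with a careful verification that every shape enumerated by the algorithm is of size $O((g+N)^d)$.
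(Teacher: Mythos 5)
Your proposal is correct and follows essentially the same route as the paper's proof: search for the period-$2g$ point of Proposition \ref{TSSMperiod} (rejecting if none exists), use it together with strong irreducibility with gap $g_0$ and Lemma \ref{lemPer} to decide global admissibility of the bounded configurations, and then verify the finite characterization of Lemma \ref{rombTSSM} over all quadruples, with the same $e^{O((g+N)^d\log|\Symb|)}$ bookkeeping. The ``self-correcting'' use of the periodic-point search, which you highlight as the way around not having TSSM a priori, is exactly the paper's step as well.
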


\begin{proof}
Given the set of configurations $\mathcal{F}  \subseteq \Symb^{\romb_N}$, the algorithm would be the following:
\begin{enumerate}
\item Look for the periodic point provided by Proposition \ref{TSSMperiod}. If such point does not exist, $X$ does not satisfy TSSM with gap $g$. If such point exists, let's denote it by $z$. (This can be done in time $e^{O(g^d\log|\Symb|)}$.)
\item Fix a shape $S \subseteq \romb_{g+N-1} \backslash \{0\}$ and then fix configurations $u \in \Symb^{\{0\}}$, $s \in \Symb^S$ and $v \in \Symb^{\partial_{2N+1}\romb_{g-1} \backslash S}$.
	\begin{enumerate}
	\item Using strong irreducibility with gap $g_0$, check whether $[us]_X$, $[sv]_X$ and $[usv]_X$ are empty or not, by trying to embed $us$, $sv$ and $usv$ in the periodic point $z$ in a locally admissible way (as in Corollary \ref{globAdm}). (This can be done in time $O(|\romb_{2N-1+g+g_0}|)e^{O(|\romb_{2N-1+g+g_0}|\log|\Symb|)} = e^{O(|\romb_{N+g}|\log|\Symb|)}$.)
	\item If $[us]_X = \emptyset$ or $[sv]_X = \emptyset$, continue.
	\item	If $[us]_X,[sv]_X \neq \emptyset$, but $[usv] = \emptyset$, then $X$ does not satisfy TSSM with gap $g$.
	\item If all the cylinders are non-empty, continue.
	\end{enumerate}
\item If after checking all the configurations we have not found $u$, $s$ and $v$ such that  $[us]_X,[sv]_X \neq \emptyset$, but $[usv] = \emptyset$, then $X$ satisfies TSSM with gap $g$ (by Lemma \ref{rombTSSM}).
\end{enumerate}

Then, since $|\romb_{n}| \leq (2n+1)^d$, the total time of this algorithm is $e^{O((g+N)^d\log|\Symb|)}$. 
\end{proof}

The following result is based on a slight modification of the approach used to prove Proposition \ref{algSSF} (see \cite[Proposition 4.1]{1-marcus}), but we include here the whole proof for completeness.

\begin{prop}
\label{algorithmProp}
Let $\Phi$ be a n.n. interaction and $\mu$ an equilibrium state for $\Phi$ such that:
\begin{itemize}
\item $\supp(\mu)$ satisfies TSSM, and
\item $\mu$ satisfies exponential SSM.
\end{itemize}
Then, there is an algorithm to compute $P(\Phi)$ to within $\epsilon$ in time $e^{O\left((\log(1/\epsilon))^{d-1}\right)}$.
\end{prop}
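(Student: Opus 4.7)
The plan is to adapt the algorithm from \cite[Proposition 4.1]{1-marcus}, substituting TSSM-based tools for the SSF-based tools used there. The argument has three main ingredients: a representation of pressure as an integral against a tractable measure, approximation of the integrand by a finitely computable quantity, and efficient evaluation of that quantity via a transfer matrix.

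First, by Proposition \ref{TSSMperiod}, $\supp(\mu)$ contains a periodic point $z$ of period $2g$ in each coordinate direction, where $g$ is the TSSM gap. Let $\nu$ be the shift-invariant measure uniformly supported on the orbit of $z$; then $\supp(\nu) \subseteq \supp(\mu)$ and Corollary \ref{TSSMRep} gives
\[
P(\Phi) = \int (I_\mu + A_\Phi)\,d\nu = \frac{1}{(2g)^d}\sum_{p \in [0,2g-1]^d} \left(I_\mu(\sigma_p z) + A_\Phi(\sigma_p z)\right).
\]
Each $A_\Phi(\sigma_p z)$ is computable in $O(1)$ time from finitely many values of $\Phi$, so it is enough to approximate the finitely many terms $I_\mu(\sigma_p z) = -\log p_\mu(\sigma_p z)$.

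Second, set $p_{\mu,n}(x) := \mu(x(0) \mid x(\past_n))$. Exponential SSM of $\mu$ together with Proposition \ref{TSSM-Cmu} (which ensures $\Cmu > 0$) yield
\[
\left|\log p_\mu(x) - \log p_{\mu,n}(x)\right| \leq \Cmu^{-1} \cdot C' e^{-\alpha' n}
\]
for constants $C',\alpha' > 0$ independent of $x \in \supp(\nu)$. Hence $n = \Theta(\log(1/\epsilon))$ suffices to guarantee additive error at most $\epsilon$ per summand.

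Third, compute $p_{\mu,n}(\sigma_p z)$ as follows. Let $W_n$ be the smallest axis-aligned box containing $\past_n \cup \{0\}$, so $|W_n| = O(n^d)$. Using the MRF property and exponential SSM, replace the true measure $\mu(\cdot \mid \sigma_p z(\past_n))$ by the Gibbs specification $\Lambda^\delta_{W_n \backslash (\past_n \cup \{0\})}$ with boundary condition $\delta = \sigma_p z(\partial W_n)$, incurring an additional $O(e^{-\alpha n})$ error. Admissibility of $\delta$ is guaranteed because $\sigma_p z \in \supp(\mu)$, and can be certified in time $e^{O(n^{d-1})}$ using Corollary \ref{globAdm}. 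The resulting ratio of partition functions on a box of side $O(n)$ can be evaluated by a transfer matrix that sweeps through $O(n)$ slabs of dimension $d-1$, each transition involving $|\Symb|^{O(n^{d-1})}$ states, for a total cost of $e^{O(n^{d-1}\log|\Symb|)}$. Setting $n = \Theta(\log(1/\epsilon))$ yields the claimed runtime $e^{O((\log(1/\epsilon))^{d-1})}$. The main obstacle is the second step: converting the SSM bound on $|p_\mu - p_{\mu,n}|$ into a logarithmic bound needed to approximate $I_\mu$, uniformly in $x \in \supp(\nu)$, requires both exponential SSM and the strict positivity of $\Cmu$. This is exactly where the weaker hypothesis of TSSM (in place of SSF in \cite{1-marcus}) suffices, thanks to Proposition \ref{TSSM-Cmu} and Corollary \ref{TSSMRep}.
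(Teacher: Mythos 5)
Your overall architecture is the same as the paper's: periodic point from Proposition \ref{TSSMperiod}, the atomic measure $\nu$ on its orbit fed into Corollary \ref{TSSMRep}, reduction to approximating $-\log p_\mu(\sigma_p z)$ using $\Cmu>0$ and exponential SSM, and evaluation of finite-volume conditional probabilities by transfer matrices with admissibility checks via Corollary \ref{globAdm}. However, your third step has a genuine error in the choice of the finite region. You take $W_n$ to be the smallest axis-aligned box containing $\past_n\cup\{0\}$; since every site of $\past$ has first coordinate $\leq 0$, this box is (essentially) $[-n,0]\times[-n,n]^{d-1}$, so the origin lies on one of its faces. Hence $\partial W_n$ contains sites not in $\past$ at distance $1$ from $0$ (for instance $(1,0,\dots,0)$). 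On those sites the true conditional law under $\mu(\cdot\mid \sigma_p z(\past_n))$ is unknown, and you replace it by the fixed values $\sigma_p z(\partial W_n)$; SSM only controls this substitution by $f(\dist(0,\Sigma))$ where $\Sigma$ contains a site at distance $1$, i.e. by $f(1)=O(1)$, not $O(e^{-\alpha n})$. In fact, as $n\to\infty$ your computed quantity converges to a conditional probability that also conditions on values at sites adjacent to $0$ outside the past, which in general differs from $p_\mu(\sigma_p z)$ by a constant (think of the checkerboard or hard-core constraints, where fixing a neighbour's symbol changes the conditional probability at $0$ drastically). So the claimed per-term error bound fails and the algorithm approximates the wrong number.

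The repair is exactly the paper's choice of region: take $W_n=\romb_n\backslash\past_n$, whose boundary splits as $S_n=\partial W_n\cap\past$ (where conditioning on $z$ is legitimate, being part of the past) and $V_n=\partial W_n\backslash\past$, which lies outside $\romb_n$ and hence at distance $>n$ from $0$; then SSM does give an $e^{-\alpha n}$ bound on the effect of the unknown boundary values, and your mean-value-theorem step using $\Cmu>0$ goes through. Two further remarks. First, with your single boundary condition $\delta=z(V_n)$ (admissible since $z\in\supp(\mu)$, so Corollary \ref{globAdm} is not even needed for that), choosing $n=\Theta(\log(1/\epsilon))$ requires knowing the SSM constants $C,\alpha$; the paper instead computes $\max$ and $\min$ of $\mu\left(z(0)\middle\vert z(S_n)\delta\right)$ over all globally admissible $\delta\in\Symb^{V_n}$ (decidable in time $e^{O(n^{d-1})}$ by Corollary \ref{globAdm}, using that TSSM implies the D-condition so positive measure equals global admissibility), which yields certified upper and lower bounds bracketing $p_\mu(z)$ and removes any need to know $g$, $C$ or $\alpha$. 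Second, your intermediate quantity $p_{\mu,n}(x)=\mu\left(x(0)\middle\vert x(\past_n)\right)$ is not directly computable from the specification (the past does not enclose $0$), so the argument must be phrased, as in the paper, directly as a sandwich of $p_\mu(z)$ between computable finite-volume conditional probabilities rather than via $p_{\mu,n}$.
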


\begin{proof}
Given the values of the n.n. interaction $\Phi$, $\mu$ an equilibrium state for $\Phi$, $X := \supp(\mu)$ an SFT and $\epsilon > 0$, the algorithm would be the following:
\begin{enumerate}

\item Look for a periodic point $z \in X$, provided by Proposition \ref{TSSMperiod}. W.l.o.g., $z$ has period $2g$ in every coordinate direction, for some $g \in \N$. This step does not need the gap of TSSM explicitly, and it does not depend on the value of $\epsilon$.

\item Take $\nu$ the shift-invariant atomic measure supported on the orbit of $z$. From Corollary \ref{TSSMRep}, we have that:
\begin{equation}
P(\Phi) = \int{\left(I_\mu + A_\Phi\right)}d\nu 
= \frac{1}{(2g)^d} \sum_{p \in [1,2g]^d}\left(-\log p_\mu(\sigma_p(z)) + A_\Phi(\sigma_p(z))\right).
\end{equation}

We need to compute the desired approximations of $p_\mu(x)$, for all $x = \sigma_p(z)$ and $p \in [1,2g]^d$. We may assume $p = 0$ (the proof is the same for all $p$). 

\item For $n = 1,2,\dots$, consider the sets $W_n = \romb_n \backslash \past_n$ and $\partial W_n = S_n \sqcup V_n$, where $S_n = \partial W_n \cap \past$ and $V_n = \partial W_n \backslash \past$.

\begin{figure}[ht]
\centering
\includegraphics[scale = 0.65]{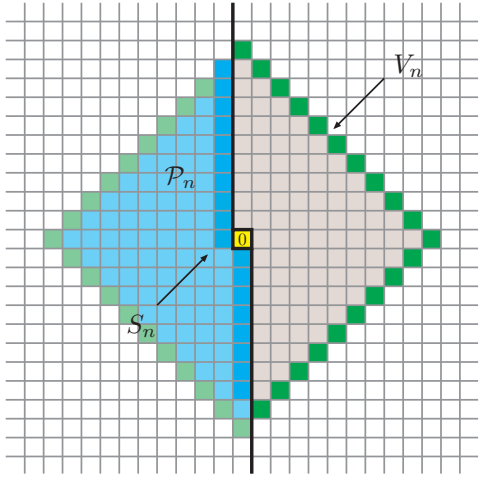}
\caption{Decomposition in the proof of Proposition \ref{algorithmProp}.}
\label{diagram}
\end{figure}

\item Represent $p_\mu(z)$ as a weighted average, using the MRF property:
\begin{equation}
p_\mu(z)	= \sum_{\delta \in \Symb^{V_n}:~\mu(z(S_n)\delta) > 0} \mu\left(z(0) \middle\vert z(S_n)\delta\right)\mu(\delta).
\end{equation}

\item Take $\overline{\delta} \in \argmax_\delta \mu\left(z(0) \middle\vert z(S_n)\delta\right)$ and $\underline{\delta} \in \argmin_\delta \mu\left(z(0) \middle\vert z(S_n)\delta\right)$, over all $\delta \in \Symb^{V_n}$ such that $\mu(z(S_n)\delta) > 0$ (or, since TSSM implies the D-condition, such that $z(S_n)\delta \in \Leng(X)$). Then,
\begin{equation}
\mu\left(z(0) \middle\vert z(S_n)\underline{\delta}\right) \leq p_\mu(z) \leq \mu\left(z(0) \middle\vert z(S_n)\overline{\delta}\right).
\end{equation}

\item By exponential SSM, there are constants $C,\alpha > 0$ such that these upper and lower bounds on $p_\mu(z)$ differ by at most $Ce^{-\alpha n}$. Taking logarithms and considering that $\mu\left(z(0) \middle\vert z(S_n)\underline{\delta}\right) \geq \Cmu > 0$, a direct application of the mean value theorem gives sequences of upper and lower bounds on $\log p_\mu(z)$ with accuracy $e^{-\Omega(n)}$, that is less than $\epsilon$ for sufficiently large $n$.
\end{enumerate}

For $\delta \in \Symb^{V_n}$, the time to compute $\mu\left(z(0) \middle\vert z(S_n)\delta\right)$ is $e^{O(n^{d-1})}$, because this is the ratio of two probabilities of configurations of size $O(n^{d-1})$, each of which can be computed using the transfer matrix method from \cite[Lemma 4.8]{2-marcus} in time $e^{O(n^{d-1})}$. Thanks to Corollary \ref{globAdm}, the necessary time to check if $z(S_n)\delta \in \Leng(X)$ is $e^{O(n^{d-1})}$. Since $\left|\Symb^{V_n}\right| = e^{O(n^{d-1})}$, the total time to compute the upper and lower bounds is $e^{O(n^{d-1})}e^{O(n^{d-1})} = e^{O(n^{d-1})}$.
\end{proof}

\begin{rem}
In the previous algorithm it is not necessary to know explicitly the gap $g$ of TSSM and the constants $C,\alpha > 0$ of the rate $f(n) = Ce^{-\alpha n}$ from exponential SSM.
\end{rem}

\begin{cor}
\label{HighAlg}
Let $\Phi$ be a $\Z^2$ n.n. interaction with $\mu$ an equilibrium state for $\Phi$, such that $\mu$ satisfies SSM with rate $f(n) = Ce^{-\alpha{n}}$, where $\alpha > 4\log|\Symb|$. Then there is an algorithm to compute $P(\Phi)$ to within $\epsilon$ in time $\mathrm{poly}(\frac{1}{\epsilon})$.
\end{cor}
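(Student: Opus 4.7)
The plan is to derive this corollary as an immediate specialization of Proposition~\ref{algorithmProp} to dimension $d = 2$, using Theorem~\ref{rate} to upgrade the SSM hypothesis into the TSSM hypothesis on the support that Proposition~\ref{algorithmProp} requires. The key observation driving the plan is that the running time $e^{O((\log(1/\epsilon))^{d-1})}$ guaranteed by Proposition~\ref{algorithmProp} collapses precisely at $d = 2$ to $e^{O(\log(1/\epsilon))} = \mathrm{poly}(1/\epsilon)$, which is exactly the bound we want.

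First, I would verify that $\mu$ is an MRF, so that Theorem~\ref{rate} can be invoked. Since $\mu$ is an equilibrium state for the n.n.\ interaction $\Phi$, the n.n.\ case of Ruelle's theorem (cited after the definition of the topological pressure of an interaction in Section~\ref{sec2}) implies that $\mu$ is a n.n.\ Gibbs measure for $\Phi$. In particular, $\mu$ is a Markov random field and $\supp(\mu) \subseteq \mathsf{X}(\Phi)$.

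Second, I would apply Theorem~\ref{rate}. The hypotheses are satisfied verbatim: $\mu$ is a $\Z^2$ MRF and it satisfies exponential SSM with rate $f(n) = Ce^{-\alpha n}$ where $\alpha > 4\log|\Symb|$. The conclusion is that $\supp(\mu)$ satisfies TSSM. Together with the exponential SSM assumption on $\mu$, this places us exactly in the setting of Proposition~\ref{algorithmProp}. Applying that proposition with $d = 2$ yields an algorithm computing $P(\Phi)$ to within $\epsilon$ in time $e^{O((\log(1/\epsilon))^{d-1})} = e^{O(\log(1/\epsilon))} = \mathrm{poly}(1/\epsilon)$.

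There is essentially no obstacle here, as the argument is purely a packaging of previously established results in the dimension where the stretched-exponential running time degenerates to a polynomial one. The only subtlety worth flagging is that neither the TSSM gap produced by Theorem~\ref{rate} nor the constants $C, \alpha$ in the SSM rate must be known explicitly to run the algorithm (as observed in the remark following Proposition~\ref{algorithmProp}), so the approximation scheme takes as input only $\Phi$ and $\epsilon$.
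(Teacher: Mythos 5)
Your proposal is correct and is essentially the paper's intended argument: the corollary is exactly the $d=2$ specialization of Proposition~\ref{algorithmProp}, with Theorem~\ref{rate} supplying the TSSM hypothesis on $\supp(\mu)$ (after noting, as you do via the n.n.\ case of the Lanford--Ruelle direction cited in Section~\ref{sec2}, that an equilibrium state for a n.n.\ interaction is a Gibbs measure, hence an MRF), so that $e^{O((\log(1/\epsilon))^{d-1})}$ collapses to $\mathrm{poly}(1/\epsilon)$. Your closing remark that neither the TSSM gap nor the constants $C,\alpha$ need be known explicitly matches the remark following Proposition~\ref{algorithmProp}.
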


Notice that, in contrast to preceding results, no mixing condition on the support is explicitly needed in Corollary \ref{HighAlg}.

\section*{Acknowledgements}
I would like to thank my advisor, Prof. Brian Marcus, for his guidance and support over all the development of this work. His insights, corrections and suggestions were an invaluable contribution. I would also like to thank Prof. Ronnie Pavlov for his important help in the construction of the family $X_g^d$ and for introducing me to coupling techniques for proving SSM, and Nishant Chandgotia for helpful discussions regarding $k$-checkerboards and the generalized pivot property.

\bibliographystyle{amsplain}
\bibliography{references}

\end{document}